\documentclass[12pt]{iopart}

\expandafter\let\csname equation*\endcsname\relax
\expandafter\let\csname endequation*\endcsname\relax
\usepackage{amsmath,amssymb,amsthm,amsfonts,amstext,amsbsy,amscd}
\usepackage{graphicx}
\usepackage{enumitem}
\usepackage{mathabx}			
\usepackage{subfigure}
\usepackage[numbers,nonamebreak]{natbib}
\usepackage[colorlinks,,linkcolor=red,citecolor=blue,urlcolor=red]{hyperref}
\usepackage{multirow}%
\usepackage{slashbox}
\usepackage{booktabs}
\newcommand{\bea}{$$ \begin{array}{lll}}
\newcommand{\eea}{\end{array} $$}
\newcommand{\ei}{\end{itemize}}

\newtheorem{satz}{Satz}[section]

\newtheorem{theorem}[satz]{Theorem}
\newtheorem{proposition}[satz]{Proposition}

\newtheorem{lemma}[satz]{Lemma}
\newtheorem{assumption}[satz]{Assumption}

\newtheorem{remark}[satz]{Remark}

\let\epsilon=\varepsilon
\let\ep=\epsilon
\let\phi=\varphi
   
\let\de=\delta

\newcommand{\ind}[1]{\boldsymbol{1}^{}_{ \left\{#1\right\}}}
\newcommand{\indi}[1]{\boldsymbol{1}^{}_{ #1}}
\newcommand{\Sum}{\displaystyle \sum}

\newcommand{\AlgI}{\textbf{Algorithm I}}
\newcommand{\AlgII}{\textbf{Algorithm II}}
\newcommand{\E}{\mathbb{E}}
\newcommand{\R}{\mathbb{R}}
\newcommand{\PP}{\mathbb{P}}

\begin{document}
\title{Noisy Laplace deconvolution with error in the operator}
\author{Thomas Vareschi}
\address{Universit\'e Denis Diderot Paris 7, B\^atiment Sophie-Germain, rue Alice-Domon et L\'eonie-Duquet, 75013 Paris, France}
\ead{thomas.vareschi@univ-paris-diderot.fr}
\begin{abstract}
We adress the problem of Laplace deconvolution with random noise in a regression framework. The time set is not considered to be fixed, but grows with the number of observation points. Moreover, the convolution kernel is unknown, and accessible only through experimental noise. We make use of a recent procedure of estimation based on a Galerkin projection of the operator on Laguerre functions (\cite{CPR}), and couple it with a threshold performed both on the operator and the observed signal. We establish the minimax optimality of our procedure under the squared loss error, when the smoothness of the signal is measured in a Laguerre-Sobolev sense and the kernel satisfies fair blurring assumptions. It is important to stress that the resulting process is adaptive with regard both to the target function's smoothness and to the kernel's blurring properties. We end this paper with a numerical study emphazising the  good practical performances of the procedure on concrete examples.
\end{abstract}

\noindent {\it Keywords:} Laplace convolution; blind deconvolution; nonparametric adaptive estimation; linear inverse problems; error in the operator.  \\
\noindent {\it Mathematical Subject Classification: } 62G05, 62G99, 65J20, 65J22.\\

\section{Introduction}

Laplace deconvolution is motivated by a wide set of practical applications, ranging from population dynamics or physics to computational tomography or fluorescence spectroscopy (\citet[Chap. 2]{Linz}, \citet{AB}, \citet{CPR}). In the corresponding setting we observe $\boldsymbol{q}$, the result of the action of a kernel $\boldsymbol{g}$ on the function of interest $\boldsymbol{f}$, according to the following equation
\begin{align}
\label{Laplace deconvolution det}
\boldsymbol{q}(t)=\int_{0}^t \boldsymbol{g}(t-\tau)\boldsymbol{f}(\tau)d\tau,\;t\geq 0
\end{align}
Equation \eqref{Laplace deconvolution det} is also refered to as Volterra integral equation. One of its main features is its causal property, since $\boldsymbol{q}(t)$ is affected only by the values of $\boldsymbol{f}$ and $\boldsymbol{g}$ at times anterior to $t$. Of course, only finite samples of $\boldsymbol{q}(t)$ are accessible in practice. Moreover, the presence of additional noise justifies the empirical modelization of \eqref{Laplace deconvolution det} by the classical regression model, inspired by \citet{APR}
\begin{align}
\label{Laplace deconvolution reg}
\boldsymbol{y}(t_i)=\int_0^{t_i} \boldsymbol{g}(t_i-\tau)\boldsymbol{f}(\tau)d\tau+\sigma \eta_i,\;i=1,...,n
\end{align}
where $0\leq t_1\leq ...\leq t_n \leq T_n$ are the points of observation, $(\eta_i)_{i=1,...,n}$ are independent standard gaussian variables, and $\sigma$ is a fixed factor accounting for the precision of the observations. $T_n$ is supposed to grow with the number of observations $n$.
\\As pointed out in \citet{APR} and \citet{CPR}, in spite of its apparent similarity with the Fourier deconvolution problem, the theoritical features of equation \eqref{Laplace deconvolution det}, as well as the practical problems raised during its resolution are deeply different. More precisely, setting artificially $\boldsymbol{g}(t)=\boldsymbol{f}(t)=0$ for $t<0$ amounts to solving the classical Fourier deconvolution problem
\begin{align}
\boldsymbol{y}(t_i)=\int_0^{T_n} \boldsymbol{g}(t_i-\tau)\boldsymbol{f}(\tau)d\tau+\sigma \eta_i,\;i=1,...,n
\end{align}
A first notable objection is that the framework of classical Fourier deconvolution assumes periodicity of the function $f$ and the kernel $g$ on $[0,T]$, a meaningless notion when applied to a varying time set $[0,T_n]$. Even more problematic is the fact that this modelization totally ignores the causal feature of Laplace convolution, creating unwanted interferences between different time sets. To finish, the manipulation consisting in artificially expanding $\boldsymbol{q}$ and $\boldsymbol{g}$ for $t<0$ creates artifacts on the estimated function at times $t<0$ as well.
\\Another approach is to treat equation \eqref{Laplace deconvolution reg} as a general ill-posed problem and apply a Tikhonov regularization (\citet{Golubev}). However the direct implementation of this method also destroys the causal nature of equation \eqref{Laplace deconvolution det}, and tends to oversmooth the solution (\citet{CL}). Subsequent adaptations which remedy these shortcomings are present in \citet{Lamm2} and \citet{CL}. However in these works the time set is considered to be fixed.
\\ A more suitable theoritical tool in solving \eqref{Laplace deconvolution det} is the use of Laplace transform, which allows to derive a closed form of the solution. However, its direct implementation is compromised by numerical problems, since the generic expression of the inverse Laplace transform is not easily computable in general. This motivates the widespread use of inversion tables, unfortunately irrelevant when the image function is not known exactly but approximated via a numerical scheme.
\\ In this paper, following \citet{CPR}, we will exploit the properties of Laguerre functions, which can be used either to compute the inverse Laplace transform (\citet{ACW}, \citet{LTD}), or to solve directly equation \eqref{Laplace deconvolution det} (\citet{KN}). More precisely, a Galerkin method applied to \eqref{Laplace deconvolution reg} shows that, even if their role is not entirely symmetric to the role played by harmonics in the framework of Fourier deconvolution, they allow a sparse analysis of equation \eqref{Laplace deconvolution det}.
\\ All the previous mentionned works only concerned the case of a deterministic noise at best. The presence of random noise requires an additionnal treatment, and calls for specific statistical tools. In the setting of random noise, \citet{DMR} considered a kernel of the form $e^{-at}$ and used a regularized inversion of the inverse Laplace transform. More recently, \citet{APR} conceived an optimal procedure in the minimax sense on H\"older spaces $\boldsymbol{H}^s(\R_+)$. This procedure used an exact expression of the solution involving the derivatives of $\boldsymbol{q}$, which were then estimated via Lepskii's method. However a shortcoming of the procedure is its strong dependence on the kernel $\boldsymbol{g}$, in the sense that a small error in $\boldsymbol{g}$ can translate into a wide difference in the result. In other words there seems to be a trade off between the closed form of the solution, and the unstability with regard to the kernel. Moreover, the fact that $\boldsymbol{g}$ is seldom observed directly in practice, but is usually subject to experimental noise should prompt us to privilege stability over exactitude.
\\In that spirit, \citet{CPR} took advantage of the algebraic properties of Laguerre functions in the context of \eqref{Laplace deconvolution reg}. With an adequate penalty term, they proposed an estimator which mimicks the oracle risk to within logarithmic terms. This modelization has the non negligible advantage of practical simplicity and efficiency, since solving equation \eqref{Laplace deconvolution det} amounts to the inversion of a lower triangular Toeplitz matrix.
\\ Even if this latter procedure proves to be more stable with regard to $\boldsymbol{g}$ experimentally, no systematic study has been conducted on the subject yet. In this paper we attempt to fill in this gap: we suppose that the observation of $\boldsymbol{g}$ is contaminated by a gaussian white noise, and show how Laguerre functions allow to handle this issue. We place ourselves under the minimax point of view and suppose that $\boldsymbol{f}$ belongs to a Laguerre-Sobolev space and that $\boldsymbol{g}$ satisfies standard blurring assumptions. We apply recent techniques for the treatment of noisy operators in the context of inverse problems (\citet{HR},\citet{DHPV}), which consist in a preliminary processing of the operator $\boldsymbol{K}$ coupled with a classical thresholding procedure applied to $\boldsymbol{y}$.

\section{Discretization of Laplace deconvolution}

\subsection{Laguerre functions}
Suppose that the target function $\boldsymbol{f}$ and the kernel $\boldsymbol{K}$ both lie in $L^2(\R_+)$. Define the Laguerre  polynomials (see \citet{GR})
\begin{align}
L_\ell(t)=\Sum_{j=0}^\ell(-1)^j \binom{\ell}{j} \frac{t^j}{j!}
\end{align} and, following \citet{CPR}, the ensuing Laguerre functions, depending on the parameter $a>0$,
\begin{align}
\boldsymbol{\phi}_\ell(t)= \sqrt{2a} e^{-at} L_\ell(2at),\;\ell \in\mathbb{N}
\end{align}
The parameter $a$ is a tuning parameter used to fit experimental curves. The Laguerre functions constitute a Hilbert basis of $L^2(\R_+)$. Any function $f\in L^2(\R_+)$ satisfies
\begin{align}
f=\Sum_{\ell \geq 0} \check{f}_\ell\boldsymbol{\phi}_\ell(t),\;\Check{f}_\ell\overset{\Delta}{=}\int_0^\infty f(\tau) \boldsymbol{\phi}_\ell(\tau)d\tau
\end{align}
The following proposition illustrates the conveniency of Laguerre functions in the framework of equation \eqref{Laplace deconvolution det}.
\begin{proposition}[\citet{GR}, Formula 7.411.4]
\label{Laguerre Laplace}
\begin{align}
\forall a>0,\,\forall t\geq 0,\;\int_0^t \boldsymbol{\phi}_k(x)\boldsymbol{\phi}_\ell(t-x)dx=(2a)^{-1/2}\big(\boldsymbol{\phi}_{\ell+m}(t)-\boldsymbol{\phi}_{\ell+m+1}(t)\big)
\end{align}
\end{proposition}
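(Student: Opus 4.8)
The plan is to pass to Laplace transforms, where the convolution on the left becomes an ordinary product and the identity collapses to a one-line telescoping computation. Throughout I read the first index on the left-hand side as $m$ (to match the right-hand side), and I write $\mathcal{L}[h](s)=\int_0^\infty e^{-st}h(t)\,dt$ for the Laplace transform.

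First I would compute $\mathcal{L}[\boldsymbol{\phi}_\ell]$. Starting from the classical transform $\mathcal{L}[L_\ell](s)=(s-1)^\ell/s^{\ell+1}$, itself a consequence of the generating function $\sum_{\ell\geq 0} L_\ell(x)z^\ell=(1-z)^{-1}\exp\!\big(-xz/(1-z)\big)$, a scaling $t\mapsto 2at$ followed by the exponential shift coming from the factor $e^{-at}$ gives
\[
\mathcal{L}[\boldsymbol{\phi}_\ell](s)=\sqrt{2a}\,\frac{(s-a)^\ell}{(s+a)^{\ell+1}}.
\]
This is the only genuinely analytic input, and it is where I would be most careful: the scaling rule $\mathcal{L}[h(b\,\cdot)](s)=b^{-1}\mathcal{L}[h](s/b)$ with $b=2a$ and the shift rule $\mathcal{L}[e^{-at}h](s)=\mathcal{L}[h](s+a)$ must be combined in the correct order.

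Next, the convolution theorem turns the left-hand side into a product, which simplifies directly:
\[
\mathcal{L}\!\left[\int_0^{\cdot}\boldsymbol{\phi}_m(x)\boldsymbol{\phi}_\ell(\cdot-x)\,dx\right](s)
=\mathcal{L}[\boldsymbol{\phi}_m](s)\,\mathcal{L}[\boldsymbol{\phi}_\ell](s)
=2a\,\frac{(s-a)^{m+\ell}}{(s+a)^{m+\ell+2}}.
\]
For the right-hand side I would compute, using the same formula,
\[
\mathcal{L}\!\left[(2a)^{-1/2}\big(\boldsymbol{\phi}_{m+\ell}-\boldsymbol{\phi}_{m+\ell+1}\big)\right](s)
=\frac{(s-a)^{m+\ell}}{(s+a)^{m+\ell+1}}-\frac{(s-a)^{m+\ell+1}}{(s+a)^{m+\ell+2}},
\]
and then factor out $(s-a)^{m+\ell}/(s+a)^{m+\ell+2}$, leaving the bracket $(s+a)-(s-a)=2a$. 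This reproduces exactly the product obtained above.

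Finally, both sides are continuous and exponentially decaying on $\R_+$ (a convolution of two such functions, respectively a finite linear combination of the $\boldsymbol{\phi}_j$), so injectivity of the Laplace transform on this class lets me conclude that the two sides agree for every $t\geq 0$. The argument is essentially mechanical; the only place an error could creep in is the derivation of $\mathcal{L}[\boldsymbol{\phi}_\ell]$, so that is the step I would verify most carefully. A transform-free alternative would multiply the claimed identity by $u^m v^\ell$, sum over $m,\ell$, and evaluate the resulting convolution in closed form via the bilinear generating function; this avoids invoking injectivity but is algebraically heavier, so I would prefer the Laplace-transform route.
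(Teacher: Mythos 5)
Your argument is correct. Note first that the paper offers no proof of this proposition at all: it is quoted directly from Gradshteyn--Ryzhik (Formula 7.411.4), so there is nothing internal to compare against, and a self-contained derivation like yours is a genuine addition. Your computation checks out at every step: $\mathcal{L}[L_\ell](s)=(s-1)^\ell/s^{\ell+1}$ follows from the explicit expansion $L_\ell(t)=\sum_j(-1)^j\binom{\ell}{j}t^j/j!$ term by term (you do not even need the generating function), the scaling and shift rules combine to give $\mathcal{L}[\boldsymbol{\phi}_\ell](s)=\sqrt{2a}\,(s-a)^\ell/(s+a)^{\ell+1}$, and the telescoping $(s+a)-(s-a)=2a$ closes the identity. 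The appeal to injectivity of the Laplace transform on continuous, exponentially bounded functions on $\R_+$ (Lerch's theorem) is legitimate for concluding pointwise equality. You were also right to silently repair the index mismatch in the statement as printed (the left-hand side carries indices $k,\ell$ while the right-hand side carries $\ell+m$; consistency requires reading them as the same pair), which is evidently a typo in the paper. The one stylistic remark is that this Laplace-transform proof is particularly apt here, since the whole point of the proposition in context is precisely that Laguerre functions diagonalize Laplace convolution into a two-term (quasi-diagonal) recursion; your proof makes the mechanism behind that sparsity visible rather than treating the formula as a black box.
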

From now on, except if explicitly mentionned, we will suppose $a=\frac{1}{2}$.
\subsection{Galerkin method}
\label{Galerkin section}
Proposition \ref{Laguerre Laplace} prompted \citet{CPR} to apply a Galerkin scheme to equation \eqref{Laplace deconvolution det}. Galerkin schemes rely on the choice of a set of functions which discretize the inverse problem at stake in a convenient way. They were beneficially  applied in the context of inverse problems (\citet{CHR}), and blind deconvolution (\citet{EK}, \citet{HR} and \citet{DHPV}). To this end we will remind briefly the underlying methodology of a Galerkin scheme and show how it conveniently applies to equation \eqref{Laplace deconvolution det}.
\\ Let $f\in L^2(\R_+)$ and $K$ an operator of $L^2(\R_+)$, and suppose we want to recover $f$ from the observation $q=Kf$. Note $\boldsymbol{V}_\ell$ the finite dimensional space spanned by the orthogonal set of Laguerre functions $\{\boldsymbol{\phi}_k\}_{k\leq \ell}$. The Galerkin approximation $f^\ell$ of $f$ on $\boldsymbol{V}_\ell$ is the solution of the equation
\begin{align}
\notag\langle Kf^\ell,v\rangle&=\langle g,v\rangle ,\; \forall v\in \boldsymbol{V}_\ell
\\\label{Galerkin}\Leftrightarrow \Sum_{k\leq \ell}\langle K\boldsymbol{\phi}_k,\boldsymbol{\phi}_k'\rangle\, \langle f^\ell, \boldsymbol{\phi}_k \rangle& = \langle g, \boldsymbol{\phi}_k \rangle ,\; \forall k'\leq \ell
\end{align}
We shall note $K^\ell$ the Galerkin matrix $(K^\ell)_{i,j}=\langle K\boldsymbol{\phi}_j,\boldsymbol{\phi}_i\rangle$, $i,j \leq \ell$. Note hence $\boldsymbol{K}$ the operator of $L^2(\R_+)$ mapping $f$ onto $t\mapsto \int_0^t f(t-\tau)g(\tau) d\tau$. We can reformulate \eqref{Laplace deconvolution det} as
\begin{align}
\label{Galerkin det}
\boldsymbol{q}^\ell=\boldsymbol{K}^\ell\boldsymbol{f}^\ell
\end{align}
Moreover, Proposition \ref{Laguerre Laplace} implies: 

\begin{proposition}[\citet{CPR}, Lemma 1]
The Galerkin matrix $\boldsymbol{K}^\ell$ is lower triangular, Toeplitz. More precisely, note $\boldsymbol{\dot{g}}$ the function with Laguerre coefficients $$\check{\dot{\boldsymbol{g}}}_\ell= \check{\boldsymbol{g}}_0\ind{\ell=0}+\big(\check{\boldsymbol{g}}_\ell-\check{\boldsymbol{g}}_{\ell-1}\big)\ind{\ell\geq 1},\;\;\ell \in \mathbb{N}$$ Then $$\boldsymbol{K}^\ell=\begin{pmatrix}
\check{\dot{\boldsymbol{g}}}_0&0&\hdots&0
\\\check{\dot{\boldsymbol{g}}}_1&\check{\dot{\boldsymbol{g}}}_0&\ddots&\vdots
\\\vdots&\ddots&\ddots&0
\\\check{\dot{\boldsymbol{g}}}_\ell&\hdots&\check{\dot{\boldsymbol{g}}}_1&\check{\dot{\boldsymbol{g}}}_0
\end{pmatrix}$$
\end{proposition}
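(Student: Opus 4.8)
The plan is to compute the matrix entries $(\boldsymbol{K}^\ell)_{i,j}=\langle \boldsymbol{K}\boldsymbol{\phi}_j,\boldsymbol{\phi}_i\rangle$ explicitly and to read off both the triangular and the Toeplitz structure directly from the answer. The only two ingredients needed are the convolution identity of Proposition \ref{Laguerre Laplace}, specialised to $a=\tfrac12$ (so that the prefactor $(2a)^{-1/2}$ equals $1$), and the orthonormality of the family $\{\boldsymbol{\phi}_k\}_{k\geq 0}$ in $L^2(\R_+)$.

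First I would expand the kernel in the Laguerre basis, $\boldsymbol{g}=\Sum_{k\geq 0}\check{\boldsymbol{g}}_k\boldsymbol{\phi}_k$, and apply the operator to a basis element. Since $\boldsymbol{K}$ acts by Laplace convolution against $\boldsymbol{g}$, each resulting integral is a convolution of two Laguerre functions, so Proposition \ref{Laguerre Laplace} yields
\begin{align}
\notag \big(\boldsymbol{K}\boldsymbol{\phi}_j\big)(t)=\int_0^t \boldsymbol{\phi}_j(t-\tau)\boldsymbol{g}(\tau)\,d\tau=\Sum_{k\geq 0}\check{\boldsymbol{g}}_k\big(\boldsymbol{\phi}_{j+k}(t)-\boldsymbol{\phi}_{j+k+1}(t)\big).
\end{align}
Pairing this with $\boldsymbol{\phi}_i$ and invoking orthonormality collapses each term to a pair of Kronecker indicators, giving
\begin{align}
\notag (\boldsymbol{K}^\ell)_{i,j}=\Sum_{k\geq 0}\check{\boldsymbol{g}}_k\big(\ind{i=j+k}-\ind{i=j+k+1}\big).
\end{align}
The interchange of summation and integration above is legitimate because of the exponential decay of the Laguerre functions together with $\boldsymbol{g}\in L^2(\R_+)$, which makes $\boldsymbol{g}\mapsto\langle\boldsymbol{K}\boldsymbol{\phi}_j,\boldsymbol{\phi}_i\rangle$ a continuous linear functional of the kernel.

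It then remains to evaluate the two indicators under the constraint $k\geq 0$. The term $\ind{i=j+k}$ survives only for $k=i-j$, which is admissible precisely when $i\geq j$, while $\ind{i=j+k+1}$ survives only for $k=i-j-1$, admissible only when $i>j$. Hence $(\boldsymbol{K}^\ell)_{i,j}=0$ whenever $i<j$, which is the lower-triangular structure; on the diagonal only the first indicator contributes and $(\boldsymbol{K}^\ell)_{i,i}=\check{\boldsymbol{g}}_0$; and strictly below the diagonal both contribute, so $(\boldsymbol{K}^\ell)_{i,j}=\check{\boldsymbol{g}}_{i-j}-\check{\boldsymbol{g}}_{i-j-1}$. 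Every entry thus depends on $(i,j)$ only through $i-j$, which is the Toeplitz property, and matching against the definition of $\check{\dot{\boldsymbol{g}}}_\ell$ gives $(\boldsymbol{K}^\ell)_{i,j}=\check{\dot{\boldsymbol{g}}}_{i-j}$, exactly the claimed matrix.

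No individual step is hard --- once the convolution identity is available the argument is essentially index bookkeeping. The one place demanding care is the diagonal, where the constraint $k\geq 0$ suppresses the second indicator and so forces the asymmetric definition $\check{\dot{\boldsymbol{g}}}_0=\check{\boldsymbol{g}}_0$ as opposed to $\check{\dot{\boldsymbol{g}}}_\ell=\check{\boldsymbol{g}}_\ell-\check{\boldsymbol{g}}_{\ell-1}$ for $\ell\geq 1$. Getting this telescoping boundary effect right, rather than any analytic difficulty, is the main thing I would watch for.
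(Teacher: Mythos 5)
Your proof is correct and is the natural derivation: the paper itself does not prove this statement but cites it from \citet{CPR} (Lemma 1), and your argument --- expand $\boldsymbol{g}$ in the Laguerre basis, apply the convolution identity of Proposition \ref{Laguerre Laplace} with $a=\tfrac12$, and collapse the inner products by orthonormality --- is exactly the intended one, including the correct handling of the $k\geq 0$ constraint that produces the asymmetric definition of $\check{\dot{\boldsymbol{g}}}_0$. Note that you have implicitly (and correctly) repaired the index typo in the paper's statement of Proposition \ref{Laguerre Laplace}, where the right-hand side should read $\boldsymbol{\phi}_{k+\ell}-\boldsymbol{\phi}_{k+\ell+1}$.
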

In the sequel, for any function $f\in L^2(\R_+)$, we will note $T(f)$ the infinite Toeplitz matrix such that $T(f)_{i,1}=f_{i+1}$ for all $i\geq 0$, and $T_\ell(f)$ the extracted matrix defined by $T_\ell(f)_{i,1}=T(f)_{i,1}$, $i\leq \ell+1$. In particular,  $$\boldsymbol{K}^\ell=T_\ell(\boldsymbol{\dot{g}})$$
The resolution of the linear system \eqref{Galerkin} now shows great practical conveniency, provided that $\boldsymbol{K}^\ell$ is invertible. This is equivalent to $\check{\boldsymbol{g}}_0\neq 0$, an assumption we will make in the sequel. 
\subsection{Application to the regression model with irregular design}
It remains to incorporate two supplementary features of equation \eqref{Laplace deconvolution reg} in the inversion of \eqref{Galerkin det}. First, the presence of the random noise $\boldsymbol{\eta}$ and secondly, the possible irregularity of the design points. This construction is due to \citet{CPR}. Due to the fact that the observation points $t_i$ are imposed by the problem, the estimation of the Laguerre coefficients $\check{\boldsymbol{q}}_\ell$ of the function $\boldsymbol{q}$ suffers from two potential drawbacks. First, the infinite support of the Laguerre polynomials as well as the function $\boldsymbol{q}$ which should not be too problematic, provided that $T_n$ is large enough and that the functions decrease sufficiently to infinity. More problematic is the fact that the observation points $t_i$ are sometimes subject to experimental constraints, which affect their repartition on $\R_+$. The consistency of the estimation of $\check{\boldsymbol{q}}_\ell$ is hereby deteriorated.
\\ We will hence suppose that the following conditions are fulfilled:
\begin{itemize}
\item There exists an integer $n_0$ such that $\frac{n}{T_n}>\sigma$ for all $n\geq n_0$.
\item $\displaystyle{\lim_{n\to \infty}} T_n = \infty,\; \text{ and }\displaystyle{\lim_{n\to \infty}}\frac{T_n}{n} = 0$
\end{itemize}
To take into account the irregularity of the design, we follow \citet{CPR} and define $P_n:[0;T_n]\to[0;T_n]$ a regular non decreasing function such that 
\begin{align}
\label{Design map}
P_n(0)=0,\;P_n(T_n)=T_n,\;P_n(t_i)=\frac{i}{n}T_n \text{ for } i\leq n
\end{align}
Note $\boldsymbol{\Phi}_\ell$ the $(\ell+1)\times n$ matrix with entries $(\boldsymbol{\Phi}_\ell)_{k,i}=\boldsymbol{\phi}_k(t_i)$. For any function $h\in L^2(\R)$, we have 
\begin{align*}
\boldsymbol{P}_\ell h(t_i)=\Sum_{k\leq \ell} \boldsymbol{\phi}_k(t_i)\check{h}_k=\boldsymbol{\Phi}_\ell h^\ell
\\ \Leftrightarrow h^\ell=\big({^{\boldsymbol{t}}}\boldsymbol{\Phi}_\ell \boldsymbol{\Phi}_\ell\big)^{-1} {^{\boldsymbol{t}}}\boldsymbol{\Phi}_\ell \boldsymbol{P}_\ell h(t_i)
\end{align*}
where $\boldsymbol{P}_\ell$ is the orthogonal projector onto $\boldsymbol{V}_\ell$. We deduce that
\begin{align}
\label{Discretized model}
\boldsymbol{y}^\ell=\big({^{\boldsymbol{t}}}\boldsymbol{\Phi}_\ell \boldsymbol{\Phi}_\ell\big)^{-1} {^{\boldsymbol{t}}}\boldsymbol{\Phi}_\ell \boldsymbol{P}_\ell\big[\boldsymbol{q}+\sigma \boldsymbol{\eta}\big](t_i)=\boldsymbol{K}^\ell\boldsymbol{f}^\ell + \sigma \big({^{\boldsymbol{t}}}\boldsymbol{\Phi}_\ell \boldsymbol{\Phi}_\ell\big)^{-1} {^{\boldsymbol{t}}}\boldsymbol{\Phi}_\ell \boldsymbol{\eta}_n
\end{align}
where $\boldsymbol{\eta}_\ell\sim \mathcal{N}(0, \boldsymbol{I}_n)$. Let us take a closer look to the matrix $({^{\boldsymbol{t}}}\boldsymbol{\Phi}_\ell \boldsymbol{\Phi}_\ell\big)$. Its general term is
\begin{align*}
({^{\boldsymbol{t}}}\boldsymbol{\Phi}_\ell \boldsymbol{\Phi}_\ell)_{\ell,k}=&\Sum_{i=1}^n \boldsymbol{\phi}_k(P^{-1}(\frac{i}{n}T_n))\boldsymbol{\phi}_\ell(P^{-1}(\frac{i}{n}T_n))
\\\sim& \frac{n}{T_n}\int_{0}^{T_n}  \boldsymbol{\phi}_k(P^{-1}(\tau))\boldsymbol{\phi}_\ell(P^{-1}(\tau))d\tau
\\&=\frac{n}{T_n}\int_{0}^{T_n}  \boldsymbol{\phi}_k(\tau)\boldsymbol{\phi}_\ell(\tau)P'(\tau)d\tau
\end{align*}
for $n,T_n$ large enough. If the points $t_i$ are equispaced, taking $P(\tau)=\tau$ in \eqref{Design map} entails that $T_n n^{-1}({^{\boldsymbol{t}}}\boldsymbol{\Phi}_\ell \boldsymbol{\Phi}_\ell\big)$ is close to the identity provided that $T_n$ is large enough. As in \citet{CPR}, we hence reformulate \eqref{Discretized model} as the sequential model
$$\boldsymbol{y}^\ell=\boldsymbol{K}^\ell\boldsymbol{f}^\ell + \sigma \sqrt{\frac{T_n}{n}}\boldsymbol{\xi}_\ell$$
where $\boldsymbol{\xi}_\ell\sim \mathcal{N}(0,\boldsymbol{\Omega}_\ell)$ and $\boldsymbol{\Omega}_\ell=n T_n^{-1}({^{\boldsymbol{t}}}\boldsymbol{\Phi}_\ell \boldsymbol{\Phi}_\ell\big)^{-1}$. In general, $\boldsymbol{\Omega}_\ell$ somehow quantifies the distance to the uniform design case. To ensure that the design is not too ill conditionned, we will suppose that the following assumption is fulfilled.
\begin{assumption}
\label{Design regularity}
Let $L\in \mathbb{N}$. There exists $C\geq 0$, such that for all $\ell \leq L$, for all $\lambda\in \text{Sp}(\boldsymbol{\Omega}_\ell)$,
$\lambda \leq C$
\end{assumption}
This assumption is dependent on the integer $L$, which plays the role of a maximal resolution level, and will be adapted to the case of interest later. The inversion of \eqref{Discretized model} now requires controls of the variable $(\boldsymbol{K}^\ell)^{-1}\boldsymbol{\xi}_\ell$. Under suitable properties of $\boldsymbol{f}$ and $\boldsymbol{g}$, we shall be able to apply a classical inverse/thresholding procedure, and derive rates of convergence over specific regularity spaces. These properties are the subject of Part \ref{Regularity spaces}.

\subsection{Error in the operator}
We already mentionned the fact that the resolution of \eqref{Laplace deconvolution det} is usually unstable with respect to $\boldsymbol{g}$ (\citet{APR}). 
Furthermore, in practice, inference on the kernel $\boldsymbol{g}$ is possible only through experimental noise, and requires a preliminary step of estimation giving way to imprecision. This additionnal error might significantly contaminate the result of any procedure of estimation if not properly treated. Let us see how Laguerre functions $\boldsymbol{\phi}_\ell$ allow to handle this issue: in section \ref{Galerkin section}, we established that the discretization of \eqref{Discretized model with error} with Laguerre functions involved a Toeplitz matrix with entries constituted of the Laguerre coefficients of $\boldsymbol{\dot{g}}$. We can thus consider $\boldsymbol{\dot{g}}$ as the finite impulse response of the operator $\boldsymbol{K}$ when applied to the system $(\boldsymbol{\phi}_\ell)_{\ell\geq0}$. To take into account the imprecision in the observations of $\boldsymbol{\dot{g}}$, we adopt the framework of blind deconvolution and suppose that $\boldsymbol{\dot{g}}$ is not known exactly, but that we have acces the noisy version
\begin{align}
\label{Observation of g}
\boldsymbol{\dot{g}}_\de=\boldsymbol{\dot{g}}+\de \boldsymbol{{b}}
\end{align}
where $\boldsymbol{{b}}$ is a gaussian white noise on $L^2(\R_+)$. The generic problem of blind deconvolution is motivated by numerous scientific fields, including for example electronic microscopy or astrophysics, where the corresponding kernel is seldom known nor directly observed. It was adequatly discussed in \citet{EK} and \citet{HR}.
\\Taking into account the observations \eqref{Observation of g}, the projection $\boldsymbol{\dot{g}}^\ell$ is changed to $\boldsymbol{\dot{g}}_\de^\ell=\boldsymbol{\dot{g}}^\ell+\de \boldsymbol{{b}}^\ell$ where $\boldsymbol{{b}}^\ell$ is  a gaussian vector with covariance $\boldsymbol{I}_\ell$. The new model, adjusted from \eqref{Discretized model} becomes
\begin{align}
\label{Discretized model with error}
\begin{cases}\boldsymbol{y}^\ell&=\boldsymbol{K}^\ell \boldsymbol{f}^\ell+\sigma \sqrt{\frac{T_n}{n}} \boldsymbol{\xi}_\ell
\\\boldsymbol{K}_\de^\ell&= \boldsymbol{K}^\ell+\de \boldsymbol{{B}}^\ell
\end{cases}
\end{align}
where $\boldsymbol{{B}}^\ell=T_\ell(\boldsymbol{b})$ is a random Toeplitz matrix. In the sequel, for the sake of clarity, we note $\ep=\sigma \sqrt{\frac{T_n}{n}}$.
\begin{remark}
We could as well suppose that we observe $\boldsymbol{g}_\de=\boldsymbol{g}+\de\boldsymbol{b}$, yet it is more convenient to work with $\boldsymbol{\dot{g}}$ (the entries of the noisy Toeplitz matrix $\boldsymbol{B}$ are directly i.i.d standard gaussian variables). In the former case, the rest of the paper however adapts with no change in the algorithms, since inequality \eqref{ineq} is satisfied as well. A modification of the proof of Theorem \ref{Lower bound} should also provide the lower bound for the second procedure.
\end{remark}
\section{Features of the target function and the kernel}
\label{Regularity spaces}
\subsection{Sobolev spaces associated to Laguerre functions}
We proceed to the description of regularity spaces associated with the resolution of \eqref{Discretized model with error}. The following material is classical, we refer to \citet{BT} or \citet{Rath} for example.
\\ Since $f\mapsto \sqrt{2a}f(2a.)$ is an isometry of $L^2(\R_+)$, the structures defined for different values of $a$ are equivalent. Hence we shall only concentrate on the mainstream case where $a=1/2$.
Define the operator $\mathfrak{L}$ on $L^2(\R_+,dx)$ by
\begin{align}
\mathfrak{L}=-\Big[x\frac{d^2}{dx^2}+\frac{d}{dx}-\frac{x}{4}\Big]
\end{align}
The functions $\boldsymbol{\phi}_\ell$ are the eigenfunctions of $\mathfrak{L}$ associated with eigenvalues $(\ell+\frac{1}{2})$. We hence define the Sobolev space $\mathcal{W}^s$ associated with Laguerre functions as
\begin{align*}
\mathcal{W}^s=&\{f\in L^2(\R_+,dx) \text{ s.t. } \mathfrak{L}^s f \in L^2(\R_+,dx)\}
\end{align*}
For a function $f\in L^2(\R_+,dx)$, we have the straightforward equivalence
\begin{align*}
f\in \mathcal{W}^s&\Leftrightarrow\Sum_{\ell \geq 0} \big|\big(\ell+\frac{1}{2}\big)^s\langle f,\boldsymbol{\phi}_\ell\rangle \big|^2<\infty 
\end{align*}
and the associated norm 
$$\|f\|_{\mathcal{W}^s}=\Sum_{\ell \geq 0}\big|\big(\ell+\frac{1}{2}\big)^s\langle f,\boldsymbol{\phi}_\ell\rangle \big|^2$$ For $M\geq 0$, we shall note $\mathcal{W}^s(M)$ the Sobolev ball of radius $M$. Finally, we remind that, as $\|\boldsymbol{\phi}_\ell\|_{\infty}\leq 1$ for all $\ell \geq 0$, we have $s>1/2\Rightarrow \mathcal{W}^s\subset \mathcal{C}^0(\R_+)$. From now on, we will hence suppose that there exists $s>1/2$ such that $\boldsymbol{f}\in \mathcal{W}^s$.

\subsection{Banded Toeplitz matrices}
\label{Toeplitz}
Before entering into details about the kernel features, we introduce basic material on Toeplitz matrices. Most of it is inspired by \citet{BGSiam} and \citet{CPR}.
\\Let $a=(a_\ell)\in \ell^1(\mathbb{Z})$ be a sequence of real numbers. We remind from section \ref{Galerkin section} that we note $T(a)$ the infinite Toeplitz matrix defined by
$$T(a)=\begin{pmatrix}
a_0&a_{-1}&a_{-2}&\hdots&\hdots
\\a_{1}&a_0&a_{-1}&\hdots&\hdots
\\a_2&a_1&a_0&\hdots&\hdots
\\\vdots&\ddots&\ddots&\ddots&\hdots
\end{pmatrix}$$
and  $T_\ell(a)\in M_\ell(\R)$ the truncated Toeplitz matrix defined as $$(T_\ell(a))_{i,j}=(T(a))_{i,j},\;i,j\leq \ell+1$$
The Toeplitz matrices $T(a)$ and $T_\ell(a)$ are naturally linked to the two respective Laurent series
$$a(z)=\Sum_{k=-\infty}^{\infty}a_k z^k \text{ and }a_\ell(z)=\Sum_{k=-\ell}^{\ell}a_k z^k$$
We will indifferently refer to the vector $a$ or the corresponding Laurent serie. The spectral norm of $T(a)$ is related to the behaviour of $a(z)$, as illustrated in the following proposition.
\begin{proposition}
\label{Toeplitz norm}
Let $a\in \ell^1(\mathbb{Z})$. Let $\mathcal{C}$ stand for the complex unit circle. We have 
$$\|T(a)\|_{\text{op}} = \|a(z)\|_{circ}$$
where $\|a(z)\|_{circ}\overset{\Delta}{=}\displaystyle{\sup_{z\in \mathcal{C}}} \big|\Sum_{\ell=-\infty}^{\infty} a_\ell z^\ell \big|$.
A simple corollary is the following inequality
$$\|T(a)\|_{\text{op}} \leq \Sum_{\ell=-\infty}^{\infty} |a_\ell| $$
\end{proposition}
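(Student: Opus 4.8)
The plan is to pass from the matrix $T(a)$ to a multiplication operator with symbol $a(z)$ on the unit circle via the Fourier isometry, and to read the operator norm off the symbol. Concretely, I would identify a sequence $x=(x_j)_{j\geq 0}\in\ell^2$ with its generating function $X(z)=\Sum_{j\geq 0}x_j z^j$, which lies in the Hardy space of $L^2(\mathcal{C})$ (equipped with normalized arc length $\frac{d\theta}{2\pi}$) and satisfies $\|x\|_{\ell^2}=\|X\|_{L^2(\mathcal{C})}$ by Parseval. Expanding $a(z)=\Sum_\ell a_\ell z^\ell$ and using $\frac{1}{2\pi}\int_{-\pi}^{\pi}e^{im\theta}\,d\theta=\ind{m=0}$, a direct computation yields the basic sesquilinear identity
\begin{align}
\langle T(a)x,y\rangle=\frac{1}{2\pi}\int_{-\pi}^{\pi} a(e^{i\theta})\,X(e^{i\theta})\,\overline{Y(e^{i\theta})}\,d\theta,
\end{align}
valid for all finitely supported $x,y$ (hence, by density, for all $x,y\in\ell^2$), where $Y$ is the generating function of $y$. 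This identity is the engine of both inequalities.

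For the upper bound I would apply the triangle inequality and Cauchy--Schwarz to the right-hand side: since $|a(e^{i\theta})|\leq \|a(z)\|_{circ}$ for every $\theta$, one gets $|\langle T(a)x,y\rangle|\leq \|a(z)\|_{circ}\,\|X\|_{L^2(\mathcal{C})}\|Y\|_{L^2(\mathcal{C})}=\|a(z)\|_{circ}\,\|x\|\,\|y\|$, whence $\|T(a)\|_{\text{op}}\leq\|a(z)\|_{circ}$. Observe that $a(z)$ is continuous on $\mathcal{C}$ because $a\in\ell^1(\mathbb{Z})$, so the supremum defining $\|a(z)\|_{circ}$ is attained at some $z_0=e^{i\theta_0}$.

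The reverse inequality is the crux, and the difficulty is that the test functions $X$ are constrained to the Hardy space (only non-negative Fourier modes), so I cannot simply take $X$ to be an indicator concentrated near $\theta_0$. The remedy is to use the normalized Szeg\H{o} kernels $k_w(z)=\sqrt{1-|w|^2}\,(1-\bar w z)^{-1}$ with $w=r z_0$ and $r\uparrow 1$: each $k_w$ belongs to the Hardy space, has unit $L^2(\mathcal{C})$ norm, and its squared boundary modulus $|k_w(e^{i\theta})|^2$ equals the Poisson kernel at $w$, an approximate identity concentrating at $\theta_0$ as $r\uparrow 1$. Inserting $x=y=k_w$ into the identity gives $\langle T(a)k_w,k_w\rangle=\frac{1}{2\pi}\int a(e^{i\theta})|k_w(e^{i\theta})|^2\,d\theta$, i.e. the harmonic extension of $a$ evaluated at $w$; by continuity of $a$ this converges to $a(z_0)$ as $r\uparrow 1$. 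Hence $\|T(a)\|_{\text{op}}\geq |\langle T(a)k_w,k_w\rangle|\to|a(z_0)|=\|a(z)\|_{circ}$, which closes the equality.

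Finally, the stated corollary is immediate from the definition of $\|\cdot\|_{circ}$: for $z\in\mathcal{C}$ one has $|z^\ell|=1$, so $|\Sum_\ell a_\ell z^\ell|\leq \Sum_\ell|a_\ell|$, and taking the supremum over $z$ gives $\|T(a)\|_{\text{op}}=\|a(z)\|_{circ}\leq\Sum_\ell|a_\ell|$. The only genuinely delicate point is the lower bound, where one must produce unit vectors in the Hardy space whose mass concentrates at the boundary maximizer; everything else is bookkeeping with the Fourier isometry.
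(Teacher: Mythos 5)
Your proof is correct. Note that the paper itself does not prove this proposition: it is quoted as standard material from the Böttcher--Grudsky theory of Toeplitz operators, so there is no in-paper argument to compare against. What you have written is a correct, self-contained rendition of the classical Hartman--Wintner/Brown--Halmos theorem $\|T(a)\|_{\text{op}}=\|a\|_{L^\infty(\mathcal{C})}$ for a symbol $a\in\ell^1(\mathbb{Z})$ (hence continuous on $\mathcal{C}$). The sesquilinear identity is the right engine: the upper bound is immediate from Cauchy--Schwarz (equivalently, from viewing $T(a)$ as the compression $PM_aP$ of the multiplication operator to the Hardy space), and you correctly identify the lower bound as the only nontrivial point, resolving it with the normalized Szeg\H{o} kernels $k_w$, whose boundary moduli squared are Poisson kernels, so that $\langle T(a)k_w,k_w\rangle$ is the harmonic extension of $a$ and converges to $a(z_0)$ as $w\to z_0$ radially. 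Two small points worth making explicit if you write this up: the identity is first proved for finitely supported $x,y$ and then extended to all of $\ell^2$ using the boundedness already obtained from the upper bound; and the $k_w$ themselves are not finitely supported, so the insertion $x=y=k_w$ relies on that extension. An equally standard alternative for the lower bound, avoiding reproducing kernels, is to test against shifted trigonometric polynomials $z^np$ and let $n\to\infty$; both routes buy the same thing. The corollary is handled correctly.
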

In particular, Proposition \ref{Toeplitz norm} applies to the case of truncated Toeplitz matrices $T_\ell(a)$. Moreover, if $a$ has no zero on the complex unit circle, we have 
\begin{align} \label{Toeplitz norm limit} \displaystyle{\limsup_{\ell\to \infty}} \|T_\ell(a)\|_{\text{op}} <\infty \text{ and } \displaystyle{\lim_{\ell\to \infty}} \|T_\ell(a)\|_{\text{op}}=\|T(a)\|_{\text{op}} \end{align}
Now suppose that $a$ and $a'$ both generate lower triangular Toeplitz matrices (i.e. $a_k=a_k'=0$ if $k<0$). Then the following equalities hold for all $\ell \geq 0$:
\begin{align}
\label{Toeplitz product and inverse}
T_\ell(a)T_\ell(a')=T_\ell(a')T_\ell(a)=T_\ell(aa')\text{ and }T_\ell(a)^{-1}=T_\ell(1/a)
\end{align}
In other words, the matrix multiplication (resp. inversion) is equivalent to a power serie multiplication (resp. inversion). 
\subsection{Degree of ill posedness}

We now need to precise the properties of $\boldsymbol{K}$ as a blurring operator of $L^2(\R_+)$. Usually the operator $\boldsymbol{K}$ is not compact, and the problem \eqref{Laplace deconvolution det} is ill-posed. This results in practical unstabilities when trying to invert equation \eqref{Discretized model} from discrete observations. The quantification of the ill-posedness of the problem is specified by the introduction of a constant, called degree of ill-posedness (DIP) of the problem (see \citet{NP}, \citet{MP} for a generic review). We adapt this concept to our framework, and make the following assumption.
\begin{assumption}[Degree of ill-posedness of $\boldsymbol{g}$]
\label{DIP}
There exists $\nu \geq 0$, $Q\geq 0$ such that, for all $\ell \geq 0$,
$$ \|(\boldsymbol{K}^\ell)^{-1}\|_{\text{op}} \leq Q (\ell\vee 1)^\nu$$
$\nu$ is called degree of ill-posedness of $\boldsymbol{g}$ (or equivalently of $\boldsymbol{K}$). We note $\mathcal{K}_\nu(Q)$ the set of functions which satisfy this assumption.
\end{assumption}
We shall see examples of kernels satisfying this assumption further. For the moment, we concentrate on the treatment of observations \eqref{Discretized model with error} in the context we just described.
\subsection{Algorithms and rates of convergence}
\label{AlgI}
The main challenge which remains to be treated now is to articulate the two critical steps of inversion and regularization, via adapted procedures. For example, let us give a brief overview of the methodology in \citet{CPR}: Let $\ell\in \mathbb{N}$, and let $\Lambda$ be the following contrast function, defined on $\mathbb{R}^\ell$ by
$$\Lambda:t\mapsto\|t\|^2-2\langle t,(\boldsymbol{K}^\ell)^{-1}\boldsymbol{y}^\ell \rangle$$
Note $\|.\|_{\text{op}}$ the spectral norm and $\|.\|_{\text{HS}}$ the Hilbert Schmidt norm. A model selection is performed on the maximal level $L$, by introducing the following penalizing factor ($B>0$ is an arbitrary constant):
$$\text{pen}(\ell)= 4\sigma^2 T_n n^{-1}\Big((1+B)\|\sqrt{\boldsymbol{Q}^\ell}\|_{\text{HS}}^2 +(1+B)^{-1}(\nu+1)\|\sqrt{\boldsymbol{Q}^\ell}\|_{\text{op}}^2 \log \ell\Big) $$
where $\boldsymbol{Q}^\ell=(\boldsymbol{K}^\ell)^{-1}\boldsymbol{\Omega}_\ell {^{\boldsymbol{t}}}(\boldsymbol{K}^\ell)^{-1}$ and $\sqrt{\boldsymbol{Q}^\ell}$ is a lower triangular matrix satisfying $\sqrt{\boldsymbol{Q}^\ell}\,{^{\boldsymbol{t}}} \sqrt{\boldsymbol{Q}^\ell}=\boldsymbol{Q}^\ell$.
The maximal level $\tilde{L}$ is hence chosen as 
$$\tilde{L}=\underset{\ell\leq \ell(n)}{\text{argmin}} \{ \Lambda^2\big((\boldsymbol{K}^\ell)^{-1} \boldsymbol{y}^\ell\big) +\text{pen}(\ell)\} $$
where $\ell(n)$ is a large enough resolution level, possibly depending on $n$, and the ensuing estimator of $\boldsymbol{f}$ is 
$$(\boldsymbol{K}^{\tilde{L}})^{-1} \boldsymbol{y}^{\tilde{L}} $$
We follow here a different path: we suppose that the target function belongs to a Sobolev-Laguerre space, and perform thresholding techniques in a minimax framework. Furthermore, our results are asymptotic with regard to $\ep,\de$. Would $\boldsymbol{g}$ be known, the estimation of $\boldsymbol{f}$ from observations \eqref{Discretized model with error} amounts to solving a standard inverse problem with signal noise. To this end, a prolific litterature is at disposal (a selected list is \citet{Donoho}, \citet{AS}, \citet{CHR}). In order to take into account the presence of noise in the operator, we shall hence apply a preliminar regularizing thresholding procedure to the noisy operator $\boldsymbol{K}_\de$ in order to ensure the stability of the further inversion step. To that end, define the maximal level as 
\begin{align}
\label{max level}
L^{\textbf{I}}=\lambda\Big(\ep\sqrt{|\log \ep|} \vee \de|\log \de|\Big)^{\frac{-1}{\nu+1}}
\end{align}
with $\lambda$ a positive constant. Define also the two thresholding levels
{\footnotesize
\begin{align}
\label{Operator level}
O_{\ell,\de}&=\kappa \big((\ell\vee 1) \log (\ell\vee 2)\big)^{1/2}\de\sqrt{|\log \de|}
\\ \label{Signal level} S_{\ell,n}^{\textbf{I}}&=
(\ell\vee 1)^\nu \Big(\tau_{sig}\ep\sqrt{|\log \ep|}\vee \tau_{\text{op}}\de|\log \de|\Big)
\end{align}
}
For $\ell \geq 0$, note $\boldsymbol{\zeta}_\ell=\langle(\boldsymbol{K}_\de^\ell)^{-1}\ind{\|(\boldsymbol{K}_\de^\ell)^{-1}\|_{\text{op}}< O_{\de,l}^{-1}}\boldsymbol{y}^\ell,\boldsymbol{\phi}_\ell\rangle$. The estimator $\widetilde{\boldsymbol{f}}^{\textbf{I}}$ of $\boldsymbol{f}$ is defined by 
\begin{align*}
\widetilde{\boldsymbol{f}}^{\textbf{I}}=\Sum_{\ell\leq L^{\textbf{I}}} \boldsymbol{\zeta}_\ell \ind{|\boldsymbol{\zeta}_\ell|>S_\ell}\boldsymbol{\phi}_\ell
\end{align*}
We call this procedure $\AlgI$. The preliminary threshold performed on $(\boldsymbol{K}_\de^\ell)^{-1}$ ensures its proximity with $(\boldsymbol{K}^\ell)^{-1}$ with high probability (see Lemma \ref{Neumann}). We now study the squared loss performance of the procedure.
\begin{theorem}
\label{Upper bound of Algorithm 1}
Let $M\geq 0$, $s>1/2$. Let $\nu\geq 0$, $Q\geq 0$. Suppose that Assumption \ref{Design regularity} holds for $L=L^{\textbf{I}}$. Then for sufficiently large thresholding constants $\kappa,\tau_{sig}$ and $\tau_{\text{op}}$,
\begin{align*}
\sup_{\boldsymbol{f}\in \mathcal{W}^s(M) \atop \boldsymbol{g}\in \mathcal{K}_\nu(Q)} \E\|\widetilde{\boldsymbol{f}}^{\textbf{I}}-\boldsymbol{f}\| \lesssim \Big(\de|\log \de|\Big)^{\frac{2s}{2(s+\nu)+1}}\vee\Big(\ep\sqrt{|\log \ep|}\Big)^{\frac{2s}{2(s+\nu)+1}}
\end{align*}
where $\lesssim$ means inequality up to a constant depending only on $\lambda,\kappa,\tau_{sig},\tau_{\text{op}},s,M,\nu,Q$.
\end{theorem}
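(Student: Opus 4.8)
The plan is to control the squared risk $\E\|\widetilde{\boldsymbol{f}}^{\textbf{I}}-\boldsymbol{f}\|^2$ and to deduce the stated bound by Jensen's inequality $\E\|\cdot\|\leq(\E\|\cdot\|^2)^{1/2}$. Writing $\rho=\ep\sqrt{|\log\ep|}\vee\de|\log\de|$, Parseval's identity in the Laguerre basis splits the risk into a truncation term and a thresholding term:
\begin{align*}
\E\|\widetilde{\boldsymbol{f}}^{\textbf{I}}-\boldsymbol{f}\|^2=\Sum_{\ell\leq L^{\textbf{I}}}\E\big|\boldsymbol{\zeta}_\ell\ind{|\boldsymbol{\zeta}_\ell|>S_\ell}-\check{\boldsymbol{f}}_\ell\big|^2+\Sum_{\ell>L^{\textbf{I}}}|\check{\boldsymbol{f}}_\ell|^2 .
\end{align*}
Since $\boldsymbol{f}\in\mathcal{W}^s(M)$, the tail is bounded by $(L^{\textbf{I}})^{-2s}M$; with the choice \eqref{max level} of $L^{\textbf{I}}$ this is of order $\rho^{2s/(\nu+1)}$, which for $s>1/2$ is strictly smaller than the target squared rate $\rho^{4s/(2s+2\nu+1)}$. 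The whole problem thus reduces to the per-level thresholding risk.

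Second, I would linearize $\boldsymbol{\zeta}_\ell$ on a favorable event. Denoting by $[\,\cdot\,]_\ell=\langle\cdot,\boldsymbol{\phi}_\ell\rangle$ the component along $\boldsymbol{\phi}_\ell$, inserting $\boldsymbol{y}^\ell=\boldsymbol{K}^\ell\boldsymbol{f}^\ell+\ep\boldsymbol{\xi}_\ell$ and using the resolvent identity $\boldsymbol{R}^\ell:=(\boldsymbol{K}_\de^\ell)^{-1}-(\boldsymbol{K}^\ell)^{-1}=-\de(\boldsymbol{K}_\de^\ell)^{-1}\boldsymbol{B}^\ell(\boldsymbol{K}^\ell)^{-1}$, one obtains, whenever the operator indicator equals one,
\begin{align*}
\boldsymbol{\zeta}_\ell=\check{\boldsymbol{f}}_\ell+\ep\big[(\boldsymbol{K}^\ell)^{-1}\boldsymbol{\xi}_\ell\big]_\ell-\de\big[(\boldsymbol{I}+\de(\boldsymbol{K}^\ell)^{-1}\boldsymbol{B}^\ell)^{-1}(\boldsymbol{K}^\ell)^{-1}\boldsymbol{B}^\ell\boldsymbol{f}^\ell\big]_\ell+\ep\big[\boldsymbol{R}^\ell\boldsymbol{\xi}_\ell\big]_\ell .
\end{align*}
The decisive point is that each of the last three coordinates is (conditionally) Gaussian with standard deviation of order at most $\ell^\nu(\ep\vee\de)$: for the signal part this follows from Assumption \ref{DIP} together with $\|\boldsymbol{\Omega}_\ell\|_{\text{op}}\leq C$ (Assumption \ref{Design regularity}), giving variance $\ep^2[\boldsymbol{Q}^\ell]_{\ell\ell}\lesssim\ep^2\ell^{2\nu}$; for the operator part one estimates a single coordinate, so that only $\|(\boldsymbol{K}^\ell)^{-1}\|_{\text{op}}\lesssim\ell^\nu$ and $\|\boldsymbol{f}\|$ enter, not the full operator norm of $\boldsymbol{R}^\ell$. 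Lemma \ref{Neumann}, together with the control $\|\boldsymbol{B}^\ell\|_{\text{op}}\lesssim\sqrt{\ell\log\ell}$ of the random Toeplitz matrix — precisely what the level $O_{\ell,\de}$ in \eqref{Operator level} encodes — guarantees that the indicator equals one and the Neumann series converges with probability $1-o(\rho^{c})$ for every $\ell\leq L^{\textbf{I}}$.

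Third, on this event I would run the classical four-region analysis of hard thresholding, splitting according to whether $|\check{\boldsymbol{f}}_\ell|$ and $|\boldsymbol{\zeta}_\ell|$ lie above or below $S_\ell$. The two diagonal regions (large detected coefficient, small killed coefficient) produce the leading contribution $\Sum_{\ell\leq L^{\textbf{I}}}\min(|\check{\boldsymbol{f}}_\ell|^2,S_\ell^2)$, whereas the two off-diagonal regions (missed large coefficient, false alarm) force the noise to exceed a fixed fraction of $S_\ell\sim\ell^\nu\rho$; since $S_\ell$ equals the effective noise standard deviation inflated by $\sqrt{|\log\ep|}$, respectively $|\log\de|$, Gaussian tail bounds make these probabilities summably small once $\tau_{sig},\tau_{\text{op}},\kappa$ are large enough, and their total contribution is negligible. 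It then remains to maximize $\Sum_{\ell\leq L^{\textbf{I}}}\min(|\check{\boldsymbol{f}}_\ell|^2,\ell^{2\nu}\rho^2)$ over $\boldsymbol{f}\in\mathcal{W}^s(M)$: the extremal sequence saturates the Sobolev constraint at the crossover level $L\sim\rho^{-2/(2s+2\nu+1)}$, which is $\leq L^{\textbf{I}}$ exactly because $s>1/2$, yielding $\Sum_{\ell\leq L}\ell^{2\nu}\rho^2\sim L^{2\nu+1}\rho^2\sim\rho^{4s/(2s+2\nu+1)}$, the square of the announced rate.

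Finally, the contribution of the unfavorable event must be discarded: there $\boldsymbol{\zeta}_\ell$ is either set to zero by the operator indicator, or — when the indicator is one but the Neumann control fails — bounded by $O_{\ell,\de}^{-1}\|\boldsymbol{y}^\ell\|$, so that $|\boldsymbol{\zeta}_\ell|^2\lesssim O_{\ell,\de}^{-2}$; since this event has probability $o(\rho^c)$ for a large exponent $c$, the sum over $\ell\leq L^{\textbf{I}}$ is swallowed by the rate. I expect the main obstacle to be the second step: obtaining deviation inequalities, uniform over $\ell\leq L^{\textbf{I}}$, for the operator-noise coordinate $\de[(\boldsymbol{K}^\ell)^{-1}\boldsymbol{B}^\ell\boldsymbol{f}^\ell]_\ell$ and for the bilinear cross-term $\ep[\boldsymbol{R}^\ell\boldsymbol{\xi}_\ell]_\ell$, which couple the random Toeplitz matrix $\boldsymbol{B}^\ell$ with the deterministic inverse and with the independent signal noise $\boldsymbol{\xi}_\ell$; and, relatedly, in calibrating $O_{\ell,\de}$ so that the spectral norm of $\boldsymbol{B}^\ell$ (of order $\sqrt{\ell\log\ell}$, rather than $\sqrt{\ell}$ as for a full Gaussian matrix) keeps the perturbed inverse within reach of $(\boldsymbol{K}^\ell)^{-1}$ with overwhelming probability, via Proposition \ref{Toeplitz norm} and Lemma \ref{Neumann}.
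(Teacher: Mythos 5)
Your proposal is correct and follows essentially the same route as the paper: Parseval splitting into tail and thresholded terms, a favorable event on which the Neumann-series argument makes $(\boldsymbol{K}_\de^\ell)^{-1}$ comparable to $(\boldsymbol{K}^\ell)^{-1}$, deviation bounds of order $\ell^{\nu}(\ep\vee\de)$ for the linear noise coordinates plus a separate treatment of the quadratic remainder via the $\sqrt{\ell\log\ell}$ concentration of $\|\boldsymbol{B}^\ell\|_{\text{op}}$, the classical four-region hard-thresholding analysis with crossover at $\ell\sim\rho^{-2/(2s+2\nu+1)}$, and discarding the bad event by its polynomially small probability. The technical obstacles you flag at the end are precisely the content of the paper's Lemmas \ref{Operator concentration}, \ref{Neumann} and \ref{Deviations}.
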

The rates in Theorem \ref{Upper bound of Algorithm 1} reveal two components, accounting respectively for the imprecision in the observation of the operator and the signal. The latter is fairly classic in non parametric statistics (\citet{NP}, \citet{JKPR}) where it is also optimal, while the former is standard (and optimal too) in blind deconvolution on Hilbert spaces (\citet{EK}, \citet{HR}). Thus, we do not study the optimality of these rates in this paper, but rather concentrate on a more specific framework related to the problem of interest.
\section{Adaptation to the standard framework of Laplace deconvolution}

We now discuss the adapation of our algorithm in the mainstream framework of Laplace deconvolution, as exposed in \citet{APR} or \citet{CPR}. As we shall see, this more restrictive framework allows to treat observations \eqref{Discretized model with error} more efficiently. To this end, we first define a more restrictive version of the degree of ill-posedness.

\begin{assumption}[Second kind degree of ill-posedness]
\label{DIP 2}
Note $\boldsymbol{\gamma}_k=\langle (1/\boldsymbol{\dot{g}}),\boldsymbol{\phi}_k \rangle$, so that $(1/\boldsymbol{\dot{g}})(z)=\Sum_{k\geq 0} \boldsymbol{\gamma}_k z^k $.
There exists $\nu > 0$, there exists $Q_2, Q_1>0$, such that for all $\ell \geq 0$,
\begin{align}
\label{DIP 2.1} \Sum_{k=0}^\ell \boldsymbol{\gamma}_k^2 &\leq Q_2 (\ell\vee 1)^{2\nu-1}
\\ \label{DIP 2.2} \Sum_{k=0}^\ell \Sum_{n=0}^k \boldsymbol{\gamma}_n^2 &\geq Q_1 (\ell\vee 1)^{2\nu}
\end{align}
\end{assumption}
For $Q=(Q_1,Q_2)$, we note $\mathcal{G}_{\nu}(Q)$ the set of functions $\boldsymbol{g}\in L^2(\R_+)$ such that Assumption \ref{DIP 2} holds. Note that the validity of this assumption automatically entails $Q_1\leq \big(1+\frac{2^{2\nu}}{2\nu}\big)Q_2$. Note also that the left term in \eqref{DIP 2.2} is the Hilbert-Schmidt norm of $(\boldsymbol{K}^\ell)^{-1}$. Thus, Assumption \ref{DIP 2} is more restrictive that Assumption \ref{DIP}. However, it is satisfied by a natural class of functions $\boldsymbol{g}$:
\begin{proposition}[\citet{CPR}, Lemma 3/ Lemma 5]
\label{DIP 2 Proposition}
Suppose that there exists $C,\nu>1/2$, $\mu\in \mathcal{C}$ and $w(z)=\prod_{i=1}^N (z-\mu_i),\,|\mu_i|>1$ a polynomial function with no pole inside of the complex unit disc, such that \begin{align}\label{Kernel decomposition} \boldsymbol{\dot{g}}(z)=C w(z)(\mu-z)^{\nu}
\end{align}
Then Assumption \ref{DIP 2} is satisfied.
Furthermore, if $w\equiv 1$ and $\nu\geq 0$, then $|\boldsymbol {\gamma}_\ell| \sim \frac{\ell^{\nu-1}}{\Gamma(\nu)}$.
\end{proposition}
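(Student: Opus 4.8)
The plan is to reduce everything to the asymptotics of the Taylor coefficients $\boldsymbol{\gamma}_k$ of the reciprocal power series, and then to obtain the two bounds \eqref{DIP 2.1} and \eqref{DIP 2.2} by elementary comparison of sums with integrals of power functions. By the inversion rule \eqref{Toeplitz product and inverse} the sequence $(\boldsymbol{\gamma}_k)$ is exactly the coefficient sequence of the reciprocal series, so that
\begin{align*}
\Big(\frac{1}{\boldsymbol{\dot{g}}}\Big)(z)=\frac{1}{C\,w(z)}\,(\mu-z)^{-\nu},\qquad |z|<1.
\end{align*}
The hypotheses $|\mu|=1$ and $|\mu_i|>1$ guarantee that $\boldsymbol{\dot{g}}$ has no zero in the open unit disc (its zeros sit at $\mu$, on the circle, and at the $\mu_i$, strictly outside), so this reciprocal is holomorphic on $\{|z|<1\}$ with a single algebraic boundary singularity at $z=\mu$. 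In particular $\boldsymbol{\gamma}_0=(\boldsymbol{\dot{g}}(0))^{-1}=(Cw(0)\mu^\nu)^{-1}\neq 0$, consistently with $\check{\boldsymbol{g}}_0\neq 0$.

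The core step is the two-sided estimate $|\boldsymbol{\gamma}_k|\sim c_0\,k^{\nu-1}$ as $k\to\infty$, with $c_0=(|C|\,|w(\mu)|\,\Gamma(\nu))^{-1}$. When $w\equiv1$ this is a direct binomial computation: writing $(\mu-z)^{-\nu}=\mu^{-\nu}\sum_{k\ge0}\binom{\nu+k-1}{k}(z/\mu)^k$ gives
\begin{align*}
\boldsymbol{\gamma}_k=\frac{\mu^{-\nu-k}}{C}\,\frac{\Gamma(\nu+k)}{\Gamma(\nu)\,k!},
\end{align*}
an identity valid for all $\nu\ge0$, and Stirling's formula yields $\Gamma(\nu+k)/k!\sim k^{\nu-1}$, hence $|\boldsymbol{\gamma}_k|\sim k^{\nu-1}/(|C|\Gamma(\nu))$. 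This is the announced asymptotic of the \emph{furthermore}, the multiplicative constant $|C|^{-1}$ equalling $1$ under the natural normalization $|C|=1$. For a general $w$, the factor $1/w$ is holomorphic on the disc of radius $\min_i|\mu_i|>1$, so its Taylor coefficients decay geometrically and the only boundary singularity of $1/\boldsymbol{\dot{g}}$ remains the algebraic one at $z=\mu$. One concludes by the transfer theorem of singularity analysis (Flajolet--Odlyzko): after checking $\Delta$-analyticity (the branch cut of $(\mu-z)^{-\nu}$ may be taken along the outward ray $\{t\mu:t\ge1\}$, which avoids a suitable indented domain, while the poles $\mu_i$ lie strictly outside it), multiplication by $1/w$, holomorphic and nonzero at $z=\mu$, only rescales the leading coefficient by $1/w(\mu)$, giving $\boldsymbol{\gamma}_k\sim \mu^{-\nu-k}(C\,w(\mu))^{-1}k^{\nu-1}/\Gamma(\nu)$. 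Equivalently, one may argue directly that $(\boldsymbol{\gamma}_k)$ is the convolution of the summable sequence of $1/w$ with the sequence of order $k^{\nu-1}$ coming from $(\mu-z)^{-\nu}$; since the latter is slowly varying, the convolution is asymptotic to $w(\mu)^{-1}$ times the second factor, the tail where both indices are large being exponentially negligible.

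It remains to deduce the two inequalities. From $\boldsymbol{\gamma}_k^2\asymp k^{2\nu-2}$ and the hypothesis $\nu>1/2$ (so that the exponent $2\nu-2$ exceeds the divergence threshold $-1$), comparison with $\int_1^\ell x^{2\nu-2}\,dx$ gives $\sum_{k=0}^\ell\boldsymbol{\gamma}_k^2\asymp(\ell\vee1)^{2\nu-1}$, which is exactly the upper bound \eqref{DIP 2.1}. For \eqref{DIP 2.2} the same comparison yields, with $S_k:=\sum_{n=0}^k\boldsymbol{\gamma}_n^2$, the lower bound $S_k\ge c'(k\vee1)^{2\nu-1}$ for $k$ large; summing once more gives $\sum_{k=0}^\ell S_k\ge c''(\ell\vee1)^{2\nu}$, the finitely many small indices (for which $S_k\ge\boldsymbol{\gamma}_0^2>0$) being absorbed into the constant $Q_1$. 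This establishes both bounds and completes the argument.

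The main obstacle is the general-$w$ coefficient asymptotics: transferring the explicit binomial behaviour of $(\mu-z)^{-\nu}$ through the analytic factor $1/w$ without losing the $k^{\nu-1}$ order. This is precisely the point where one must invoke singularity analysis (or the dominated-convergence estimate on the convolution tail). Everything downstream (the two Sobolev-type sums) is a routine integral comparison, clean exactly because $\nu>1/2$ keeps $2\nu-2$ above $-1$.
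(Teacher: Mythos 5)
Your argument is correct, but it handles the general polynomial factor $w$ by a genuinely different route than the paper. You push all the way to the pointwise asymptotics $|\boldsymbol{\gamma}_k|\sim k^{\nu-1}/(|C|\,|w(\mu)|\,\Gamma(\nu))$, obtaining the general case from the explicit binomial computation for $(\mu-z)^{-\nu}$ via singularity analysis (or, equivalently, a dominated-convergence estimate on the convolution with the geometrically decaying coefficients of $1/w$), and then deduce \eqref{DIP 2.1} and \eqref{DIP 2.2} by summation. The paper never establishes coefficient asymptotics for general $w$: it stays at the level of norms, using the Toeplitz factorizations $T_\ell\big((1/\boldsymbol{\dot{g}})\big)=C^{-1}T_\ell(w^{-1})T_\ell\big((1-z)^{-\nu}\big)$ and $T_\ell\big((1-z)^{-\nu}\big)=C\,T_\ell(w)T_\ell\big((1/\boldsymbol{\dot{g}})\big)$ from \eqref{Toeplitz product and inverse}, together with $\|T_\ell(w^{\pm 1})\|_{\text{op}}\asymp 1$ (Proposition \ref{Toeplitz norm} and \eqref{Toeplitz norm limit}, since $w$ has no zero on the unit circle), to transfer the $\ell^2$-norm bound (giving \eqref{DIP 2.1}) and the Hilbert--Schmidt bound (giving \eqref{DIP 2.2}, whose left-hand side is exactly $\|(\boldsymbol{K}^\ell)^{-1}\|_{\text{HS}}^2$) from the pure power $(1-z)^{-\nu}$, for which the binomial asymptotic \eqref{Binom equiv} suffices. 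The paper's route is more elementary and lives entirely inside the Toeplitz machinery already set up in Section \ref{Toeplitz}; yours imports the transfer theorem and the attendant $\Delta$-analyticity check, but buys a strictly stronger conclusion --- the exact order of each $\boldsymbol{\gamma}_k$ for arbitrary $w$, of which the paper's ``furthermore'' (stated only for $w\equiv 1$) is a special case, up to the normalizing factor $|C|^{-1}$ you rightly flag. Both proofs then conclude by the same integral comparison, where $\nu>1/2$ is what keeps $\sum_k \boldsymbol{\gamma}_k^2$ divergent of order $\ell^{2\nu-1}$.
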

For completeness, we give a proof of Proposition \ref{DIP 2 Proposition} in section \ref{Proofs}. We now turn to the standard framework of Laplace deconvolution, as exposed in \citet{APR} and \citet{CPR}. To this end, we define the following assumptions  concerning the kernel $\boldsymbol{g}$.
\begin{assumption}
\label{assumptions}
\hspace{1cm}
\begin{enumerate}[label=(A\arabic*)]
\item \label{A1} There exists an integer $r\geq 1$ such that $$\frac{d^j\boldsymbol{g}}{dt^j}\big|_{t=0}=\begin{cases}0&\text{ if } j=0,1,...,r-2\\B_r \neq 0 &\text{ if } j=r-1 \end{cases}$$
\item \label{A2} $\boldsymbol{g}\in L^1\big([0,+\infty)\big)$ is $r$ times differentiable and $\boldsymbol{g}^{(r)}\in  L^1\big([0,+\infty)\big)$.
\item \label{A3} The Laplace transform of $\boldsymbol{g}$ has no zeros with non negative real parts except for the zeros of the form $\infty+ib$.
\end{enumerate}
\end{assumption}
The consequences of these assumptions are well formulated in the terms of the preceding framework:
\begin{proposition}[\citet{CPR}, Lemma 3]
\label{Theorem CPR}
Suppose that Assumptions \ref{A1}, \ref{A2} and \ref{A3} hold. Then the hypotheses of Proposition \ref{DIP 2 Proposition} are satisfied with $\mu=1$, $\nu=r$.
\end{proposition}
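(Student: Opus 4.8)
The plan is to pass from the discrete symbol $\boldsymbol{\dot g}(z)=\Sum_{\ell\ge 0}\check{\dot{\boldsymbol g}}_\ell z^\ell$ to the Laplace transform of $\boldsymbol g$ through a conformal change of variable, and then to read off the factorisation \eqref{Kernel decomposition} from the analytic behaviour of the Laplace transform that is encoded in Assumptions \ref{A1}, \ref{A2} and \ref{A3}.

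First I would record the algebraic link between the two symbols. With $a=1/2$ one has $\boldsymbol{\phi}_\ell(t)=e^{-t/2}L_\ell(t)$, and the classical Laplace transform of Laguerre polynomials gives, for $\mathrm{Re}\,p>0$,
\begin{align*}
\int_0^\infty e^{-pt}\boldsymbol{\phi}_\ell(t)\,dt=\frac{(p-\tfrac12)^\ell}{(p+\tfrac12)^{\ell+1}}.
\end{align*}
Summing against the coefficients $\check{\boldsymbol g}_\ell$ and setting $z=\frac{p-1/2}{p+1/2}$ (equivalently $p=\frac{1+z}{2(1-z)}$) yields $(\mathcal L\boldsymbol g)(p)=\frac{1}{p+1/2}\Sum_{\ell\ge0}\check{\boldsymbol g}_\ell z^\ell$. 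Since the definition of $\boldsymbol{\dot g}$ gives $\boldsymbol{\dot g}(z)=(1-z)\Sum_{\ell\ge0}\check{\boldsymbol g}_\ell z^\ell$ and $1-z=(p+1/2)^{-1}$, the two factors cancel and I obtain the clean identity $\boldsymbol{\dot g}(z)=(\mathcal L\boldsymbol g)\!\left(\frac{1+z}{2(1-z)}\right)$. The M\"obius map $z\mapsto p$ sends the open unit disc onto the right half-plane $\{\mathrm{Re}\,p>0\}$, the unit circle $\mathcal C$ onto the imaginary axis, and the boundary point $z=1$ to $p=\infty$; this dictionary is the heart of the argument.

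Next I would use Assumptions \ref{A1} and \ref{A2} to fix the order of vanishing of $\boldsymbol{\dot g}$ at $z=1$, i.e. the behaviour of $(\mathcal L\boldsymbol g)(p)$ as $p\to\infty$. Integrating by parts $r$ times and using that $\boldsymbol g^{(j)}(0)=0$ for $j\le r-2$ while $\boldsymbol g^{(r-1)}(0)=B_r$, one gets $p^{\,r}(\mathcal L\boldsymbol g)(p)=B_r+(\mathcal L\boldsymbol g^{(r)})(p)$. Because $\boldsymbol g^{(r)}\in L^1$, the Riemann--Lebesgue lemma (on the imaginary axis) together with dominated convergence (for $\mathrm{Re}\,p\to+\infty$) forces $(\mathcal L\boldsymbol g^{(r)})(p)\to0$ as $|p|\to\infty$ in $\{\mathrm{Re}\,p\ge0\}$, so $(\mathcal L\boldsymbol g)(p)\sim B_r p^{-r}$. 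Since $p\sim(1-z)^{-1}$ as $z\to1$, this translates into $\boldsymbol{\dot g}(z)\sim B_r(1-z)^r$, so that $z=1$ is a zero of $\boldsymbol{\dot g}$ of order exactly $r$; this already identifies $\mu=1$ and $\nu=r$ (and $r\ge1>1/2$, as required). Assumption \ref{A3}, that $\mathcal L\boldsymbol g$ has no zero in the closed right half-plane apart from those escaping to infinity, then says, via the dictionary above, that $\boldsymbol{\dot g}$ has no zero on the closed unit disc other than at $z=1$.

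Finally I would assemble the factorisation. Setting $w(z):=C^{-1}\boldsymbol{\dot g}(z)(1-z)^{-r}$ for a suitable constant $C$, the order-$r$ vanishing at $z=1$ is absorbed and, by the zero-location just obtained together with the analyticity of $\mathcal L\boldsymbol g$ on $\{\mathrm{Re}\,p\ge0\}$, the function $w$ is analytic and non-vanishing on the closed unit disc; all of its zeros (and, when $\mathcal L\boldsymbol g$ is rational, its poles) lie in $\{|z|>1\}$, i.e. at points $\mu_i$ with $|\mu_i|>1$, which is exactly the factor required in \eqref{Kernel decomposition}. This gives $\boldsymbol{\dot g}(z)=C\,w(z)(1-z)^r$ with $\mu=1$, $\nu=r$, establishing the hypotheses of Proposition \ref{DIP 2 Proposition}. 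The main obstacle is the analytic bookkeeping at the boundary point $z=1$ (equivalently $p=\infty$): one must make the asymptotic $(\mathcal L\boldsymbol g)(p)\sim B_r p^{-r}$ rigorous and uniform enough to conclude that $z=1$ is an \emph{isolated} zero of exact order $r$, and one must read the ``$\infty+ib$'' exception in \ref{A3} correctly so that no other boundary zero of $\boldsymbol{\dot g}$ is overlooked. Once this is settled, identifying the exterior zeros (and poles) with the factor $w$ regular and non-vanishing on the closed disc is routine.
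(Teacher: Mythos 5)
The paper itself offers no proof of this proposition; it is imported verbatim from \citet{CPR} (Lemma~3), so there is no internal argument to compare yours against. Your route is nevertheless the right one, and the first two thirds are correct: the identity $\boldsymbol{\dot{g}}(z)=(\mathcal{L}\boldsymbol{g})\bigl(\tfrac{1+z}{2(1-z)}\bigr)$ (via the Laplace transform of the Laguerre functions and the cancellation of the factor $1-z=(p+\tfrac12)^{-1}$ against the telescoping in the definition of $\boldsymbol{\dot{g}}$) is exactly the mechanism underlying the cited lemma, and the derivation of $(\mathcal{L}\boldsymbol{g})(p)\sim B_r p^{-r}$ from \ref{A1}--\ref{A2} by $r$-fold integration by parts and Riemann--Lebesgue, together with the reading of \ref{A3} as non-vanishing of $\boldsymbol{\dot{g}}$ on the closed disc away from $z=1$, is sound.

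The gap is in the last step. The hypothesis of Proposition \ref{DIP 2 Proposition} asks for $\boldsymbol{\dot{g}}(z)=Cw(z)(1-z)^{\nu}$ with $w$ a \emph{polynomial} whose roots $\mu_i$ all satisfy $|\mu_i|>1$. Your $w(z)=C^{-1}\boldsymbol{\dot{g}}(z)(1-z)^{-r}$ is not a polynomial for generic $\boldsymbol{g}$, and the assertion that it is ``analytic and non-vanishing on the closed unit disc'' is not justified by \ref{A1}--\ref{A3}: what those assumptions deliver is continuity on the closed disc (including at $z=1$, by the $p^{-r}$ asymptotics) and two-sided boundedness, but analyticity across the boundary point $z=1$ can genuinely fail (e.g.\ for compactly supported $\boldsymbol{g}$ the map $p\mapsto(\mathcal{L}\boldsymbol{g})(p)$ is entire and grows in the left half-plane, so $z=1$ is an essential singularity of $\boldsymbol{\dot{g}}$ viewed from outside the disc). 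This is not a cosmetic issue: the proof of Proposition \ref{DIP 2 Proposition} rests on $\|T_\ell(w)\|_{\text{op}}\asymp\|T_\ell(w^{-1})\|_{\text{op}}\asymp 1$, which for a polynomial with exterior roots is immediate (geometric decay of the coefficients of $1/w$), whereas for a merely continuous non-vanishing symbol it requires $w$ and $1/w$ to lie in the Wiener algebra ($\ell^1$ coefficients), something continuity alone does not give. To close the argument you must either strengthen the boundary analysis so that $w$ and $1/w$ are shown to have absolutely summable Taylor coefficients (this needs quantitative decay of $\mathcal{L}\boldsymbol{g}^{(r)}$, i.e.\ more than Riemann--Lebesgue), or bypass Proposition \ref{DIP 2 Proposition} altogether and verify \eqref{DIP 2.1}--\eqref{DIP 2.2} directly from the coefficient asymptotics of $(1-z)^{-r}$ corrected by the bounded factor $1/w$, which is in effect what \citet{CPR} do.
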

Hence, Assumption \ref{DIP 2} is verified with $\nu=r$ and $\AlgI$ applies. However, Assumption \ref{DIP 2} provides additional information on the behaviour of $(1/\boldsymbol{\dot{g}})$. We adapt $\AlgI$ to this new framework, by operating the following changes:
\begin{itemize}
\item Set the maximal level to $$L^{\textbf{II}}=\lambda\Big(\ep\sqrt{|\log \ep|} \vee \de|\log \de|\Big)^{-1}$$
\item Set the signal thresholding level to {\footnotesize
\begin{align}
\label{Signal level II} S_{\ell,n}^{\textbf{II}}&=\begin{cases}
\| (\boldsymbol{K}_\de^\ell)^{-1}\|_{\text{HS}} (\ell\vee 1)^{-1/2} \Big( \tau_{sig}\ep\sqrt{|\log \ep|} \vee \tau_{\text{op}}\de |\log \de|\Big) &\text{ if }\|(\boldsymbol{K}_\de^\ell)^{-1}\|_{\text{op}}< O_{\ell,\de}^{-1}
\\ +\infty &\text{ if } \|(\boldsymbol{K}_\de^\ell)^{-1}\|_{\text{op}}\geq  O_{\ell,\de}^{-1}
\end{cases}
\end{align}
}
\end{itemize}
where $\|A\|_{\text{HS}}=\sqrt{\text{Tr}(^tA A)}$ is the Hilbert-Schmidt norm. We call the modified procedure $\AlgII$ and note $\widetilde{\boldsymbol{f}}^{\textbf{II}}$ the corresponding estimated function. A notable gain of this new algorithm is its independence with regard to the parameter $\nu$. Indeed, Assumption \ref{DIP 2} allows us to use $\|(\boldsymbol{K}_\de^\ell)^{-1}\|_{\text{HS}}$ in \eqref{Signal level} as a substitute of $\ell^\nu$, and to overesimate the 'true' maximal level $L^{\textbf{I}}$. Its performances are exposed in the next theorem:
\begin{theorem}
\label{Upper bound of Algorithm 2}
Let $M\geq 0$. Let $\nu>0$, $Q_2,Q_1>0$ and $s>1/2$. Suppose that Assumption \ref{Design regularity} holds with $L=L^{\textbf{II}}$. Then for sufficiently large thresholding constants $\kappa,\tau_{sig}$ and $\tau_{\text{op}}$,
\begin{align*}
\sup_{\boldsymbol{f}\in \mathcal{W}^s(M) \atop \boldsymbol{K}\in \mathcal{G}_\nu(Q)} \E\|\widetilde{\boldsymbol{f}}^{\textbf{II}}-\boldsymbol{f}\| \lesssim \Big(\de|\log \de|\Big)^{\frac{s}{s+\nu}}\vee\Big(\ep\sqrt{|\log \ep|}\Big)^{\frac{s}{s+\nu}}
\end{align*}
where $\lesssim$ means inequality up to a constant depending only on $\lambda,\kappa,\tau_{sig},\tau_{\text{op}},s,M,\nu,Q_1,Q_2$.
\end{theorem}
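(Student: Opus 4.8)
The plan is to bound the squared risk $\E\|\widetilde{\boldsymbol{f}}^{\textbf{II}}-\boldsymbol{f}\|^2$ and to deduce the stated control of $\E\|\widetilde{\boldsymbol{f}}^{\textbf{II}}-\boldsymbol{f}\|$ from Jensen's inequality, $\E\|\widetilde{\boldsymbol{f}}^{\textbf{II}}-\boldsymbol{f}\|\leq(\E\|\widetilde{\boldsymbol{f}}^{\textbf{II}}-\boldsymbol{f}\|^2)^{1/2}$. Writing $\check{\boldsymbol{f}}_\ell=\langle\boldsymbol{f},\boldsymbol{\phi}_\ell\rangle$ and $\rho=\tau_{sig}\ep\sqrt{|\log\ep|}\vee\tau_{\text{op}}\de|\log\de|$, Parseval's identity gives the decomposition
\begin{align*}
\E\|\widetilde{\boldsymbol{f}}^{\textbf{II}}-\boldsymbol{f}\|^2=\Sum_{\ell>L^{\textbf{II}}}\check{\boldsymbol{f}}_\ell^2+\Sum_{\ell\leq L^{\textbf{II}}}\E\big|\boldsymbol{\zeta}_\ell\ind{|\boldsymbol{\zeta}_\ell|>S^{\textbf{II}}_\ell}-\check{\boldsymbol{f}}_\ell\big|^2 .
\end{align*}
The first (truncation) term is the easy part: since $\boldsymbol{f}\in\mathcal{W}^s(M)$ it is at most $(L^{\textbf{II}})^{-2s}M\lesssim\rho^{2s}$, which is negligible against the announced rate $\rho^{2s/(s+\nu)}$ as soon as $s+\nu\geq1$; as $s>1/2$, this holds whenever $\nu\geq1/2$, which is the case in the relevant class (cf. Proposition \ref{DIP 2 Proposition}). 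It remains to control the thresholding term.

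Second, I would isolate the noise structure of each coordinate. On the event $\mathcal{A}_\ell=\{\|(\boldsymbol{K}_\de^\ell)^{-1}\|_{\text{op}}<O_{\ell,\de}^{-1}\}$ carried by the indicator in $\boldsymbol{\zeta}_\ell$, Lemma \ref{Neumann} validates the expansion $(\boldsymbol{K}_\de^\ell)^{-1}=(\boldsymbol{K}^\ell)^{-1}-\de(\boldsymbol{K}^\ell)^{-1}\boldsymbol{B}^\ell(\boldsymbol{K}_\de^\ell)^{-1}$, so that, denoting by $[\,\cdot\,]_\ell$ the last coordinate of a vector,
\begin{align*}
\boldsymbol{\zeta}_\ell-\check{\boldsymbol{f}}_\ell=\underbrace{\ep\big[(\boldsymbol{K}^\ell)^{-1}\boldsymbol{\xi}_\ell\big]_\ell}_{\text{signal noise}}-\underbrace{\de\big[(\boldsymbol{K}^\ell)^{-1}\boldsymbol{B}^\ell(\boldsymbol{K}_\de^\ell)^{-1}\boldsymbol{y}^\ell\big]_\ell}_{\text{operator noise}}.
\end{align*}
The signal noise is centred Gaussian with variance $\ep^2[\boldsymbol{Q}^\ell]_{\ell\ell}$; by Assumption \ref{Design regularity} and \eqref{DIP 2.1} this is at most $C\ep^2\Sum_{k=0}^\ell\boldsymbol{\gamma}_k^2\lesssim\ep^2(\ell\vee1)^{2\nu-1}$, i.e. a standard deviation of order $\ep(\ell\vee1)^{\nu-1/2}$. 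Combining \eqref{DIP 2.1} and \eqref{DIP 2.2} yields $\|(\boldsymbol{K}^\ell)^{-1}\|_{\text{HS}}^2\asymp(\ell\vee1)^{2\nu}$, so the data-driven threshold \eqref{Signal level II} reproduces exactly this scale, $S^{\textbf{II}}_\ell\asymp(\ell\vee1)^{\nu-1/2}\rho$; this is precisely the gain over $\AlgI$, whose threshold rested on the coarser operator-norm rate $(\ell\vee1)^{\nu}$.

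Third, I would establish the pointwise bound $\E|\boldsymbol{\zeta}_\ell\ind{|\boldsymbol{\zeta}_\ell|>S^{\textbf{II}}_\ell}-\check{\boldsymbol{f}}_\ell|^2\lesssim\check{\boldsymbol{f}}_\ell^2\wedge(S^{\textbf{II}}_\ell)^2$ up to remainders that are negligible after summation. This is the classical hard-threshold argument split over the four regimes (large or small coefficient, threshold exceeded or not): the two wrong-decision regimes are controlled by Gaussian deviation bounds for the signal noise and by concentration of the operator noise, the factors $\sqrt{|\log\ep|}$ and $|\log\de|$ in \eqref{Signal level II} ensuring that the probabilities of these regimes, and of $\mathcal{A}_\ell^c$, are high powers of $\ep$ and $\de$; their contribution is then absorbed via Cauchy--Schwarz against the fourth moment of $\boldsymbol{\zeta}_\ell$. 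The delicate point is the operator noise: one must bound $\de[(\boldsymbol{K}^\ell)^{-1}\boldsymbol{B}^\ell(\boldsymbol{K}_\de^\ell)^{-1}\boldsymbol{y}^\ell]_\ell$ uniformly in $\ell\leq L^{\textbf{II}}$, which demands concentration inequalities for the quadratic and bilinear forms in the random Toeplitz matrix $\boldsymbol{B}^\ell=T_\ell(\boldsymbol{b})$, in particular the bound $\|\boldsymbol{B}^\ell\|_{\text{op}}\lesssim\sqrt{(\ell\vee1)\log(\ell\vee2)}$ that calibrates the operator level \eqref{Operator level} and the constant $\tau_{\text{op}}$. I expect this operator-noise control, carried out uniformly over the resolution levels, to be the main obstacle.

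Finally, summing the pointwise bounds and performing the bias--variance trade-off completes the argument. With $S^{\textbf{II}}_\ell\asymp(\ell\vee1)^{\nu-1/2}\rho$, splitting at an arbitrary level $m$ gives
\begin{align*}
\Sum_{\ell\leq L^{\textbf{II}}}\check{\boldsymbol{f}}_\ell^2\wedge(S^{\textbf{II}}_\ell)^2\lesssim\rho^2\Sum_{\ell\leq m}(\ell\vee1)^{2\nu-1}+m^{-2s}\Sum_{\ell>m}(\ell\vee1)^{2s}\check{\boldsymbol{f}}_\ell^2\lesssim\rho^2 m^{2\nu}+M\,m^{-2s}.
\end{align*}
Optimising over $m$ gives $m\asymp\rho^{-1/(s+\nu)}$, which lies below $L^{\textbf{II}}\asymp\rho^{-1}$ exactly because $s+\nu\geq1$, and a squared risk of order $\rho^{2s/(s+\nu)}$. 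Note that the disappearance here of the usual additive $1$ in the exponent (present in Theorem \ref{Upper bound of Algorithm 1}) traces back precisely to the sharper variance exponent $2\nu-1$ afforded by \eqref{DIP 2.1}. Taking square roots produces the claimed bound $\E\|\widetilde{\boldsymbol{f}}^{\textbf{II}}-\boldsymbol{f}\|\lesssim\rho^{s/(s+\nu)}=(\de|\log\de|)^{s/(s+\nu)}\vee(\ep\sqrt{|\log\ep|})^{s/(s+\nu)}$.
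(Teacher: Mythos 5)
Your proposal is correct and follows essentially the same route as the paper: Parseval decomposition with the tail bound $(L^{\textbf{II}})^{-2s}$, the resolvent/Neumann expansion of $(\boldsymbol{K}_\de^\ell)^{-1}$ on the good operator event, the sharper per-coordinate variance $\ep^2(\ell\vee1)^{2\nu-1}$ drawn from \eqref{DIP 2.1} together with the equivalence $\|(\boldsymbol{K}^\ell)^{-1}\|_{\text{HS}}\asymp(\ell\vee1)^{\nu}$ that calibrates $S^{\textbf{II}}_{\ell,n}$, the four-regime hard-threshold split, and the bias--variance cut at $m\asymp\rho^{-1/(s+\nu)}$. The one step you defer as "the main obstacle" --- uniform control of the operator-noise term via $\|\boldsymbol{B}^\ell\|_{\text{op}}\lesssim\sqrt{(\ell\vee1)\log(\ell\vee2)}$ and the bound $\|(\boldsymbol{K}^\ell)^{-1}\boldsymbol{f}^\ell\|^2\lesssim\ell^{2\nu-1}$ (which uses $s>1/2$) --- is exactly what the paper's Lemmas \ref{Operator concentration}, \ref{Neumann} and \ref{Deviations II} supply.
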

Thus, in addition to the adaptivity over the parameter $\nu$, the strengthening of Assumption \ref{DIP} via \eqref{DIP 2.1} and \eqref{DIP 2.2} allows to improve on the rates of Theorem \ref{Upper bound of Algorithm 1} with regard both to the operator and signal noise. Our next result shows that the rate achieved in Theorem \ref{Upper bound of Algorithm 2} is indeed optimal, up to logarithmic terms. The lower bound will not decrease for increasing noise levels $\de$ and $\ep$, whence it suffices to provide separately the cases $\de=0$ and $\ep=0$.
\begin{theorem}
\label{Lower bound}
Let $s>1/2$, let $M\geq 0$ $\nu>1/2$ and $Q_2\geq c_\nu Q_1>0$. Here $c_\nu$ is a constant depending only on $\nu$ which will will not seek to precise. We have
$$\inf_{\tilde{f}}\sup_{f\in \mathcal{W}^s(M) \atop \boldsymbol{g}\in\mathcal{G}_\nu(Q)} \E\|\tilde{\boldsymbol{f}}-\boldsymbol{f}\| \gtrsim \de^{\frac{s}{s+\nu}}|\log \de|^{-1}\vee\ep^{\frac{s}{s+\nu}}|\log \ep|^{-1}$$
where the infimum is taken among all estimators $\tilde{f}$ of $f$ based on observations \eqref{Discretized model with error}.
\end{theorem}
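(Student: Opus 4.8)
The plan is to prove the two summands separately, which the paragraph preceding the statement already justifies: since the right-hand side is monotone in $\de$ and $\ep$, it suffices to bound the minimax risk from below by $\ep^{\frac{s}{s+\nu}}|\log\ep|^{-1}$ in the pure signal-noise regime ($\de=0$) and by $\de^{\frac{s}{s+\nu}}|\log\de|^{-1}$ in the pure operator-noise regime ($\ep=0$). In both regimes I would use the classical reduction of a minimax lower bound to a testing problem over a finite family of hypotheses (Assouad's cube, or the Varshamov--Gilbert bound followed by Fano's inequality). The recurring ingredients are a critical resolution level $L\asymp(\cdot)^{-1/(s+\nu)}$, a block $\mathcal B$ of Laguerre indices of cardinality $\asymp L$ centred at $L$ on which the perturbations are placed, amplitudes calibrated so that the competing functions remain in $\mathcal W^s(M)$, and a Kullback--Leibler computation certifying indistinguishability.

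For the signal-noise regime I would freeze one canonical kernel, namely the one provided by Proposition \ref{DIP 2 Proposition} with $w\equiv 1$, $\boldsymbol{\dot{g}}(z)=C(1-z)^\nu$, whose inverse symbol has coefficients $\boldsymbol{\gamma}_k\sim k^{\nu-1}/\Gamma(\nu)$, so that $\boldsymbol{g}\in\mathcal G_\nu(Q)$ with the prescribed constants. Model \eqref{Discretized model with error} then reduces to the Gaussian sequence inverse problem $\boldsymbol y^\ell=\boldsymbol K^\ell\boldsymbol f^\ell+\ep\boldsymbol\xi_\ell$ with $\boldsymbol\xi_\ell\sim\mathcal N(0,\boldsymbol\Omega_\ell)$ and $\boldsymbol\Omega_\ell$ uniformly bounded by Assumption \ref{Design regularity}. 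On the block $\mathcal B$ I would build a hypercube $\boldsymbol f_\omega=\sum_{\ell\in\mathcal B}\omega_\ell\,\gamma\,\boldsymbol\phi_\ell$, $\omega\in\{0,1\}^{\mathcal B}$, with $\gamma\asymp L^{-s-1/2}$ so that every $\boldsymbol f_\omega\in\mathcal W^s(M)$. Varshamov--Gilbert extracts an exponentially large subfamily with pairwise $L^2$ separation $\asymp\sqrt{|\mathcal B|}\,\gamma\asymp L^{-s}\asymp\ep^{\frac{s}{s+\nu}}$, while the pairwise symmetrised divergences, governed by $\ep^{-2}\|\boldsymbol K(\boldsymbol f_\omega-\boldsymbol f_{\omega'})\|^2$, are of order one once one uses that $\boldsymbol K$ damps the block at level $L$ by a factor $\asymp L^{-\nu}$ (the forward counterpart of Assumption \ref{DIP}). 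Fano's inequality then yields the rate $\ep^{\frac{s}{s+\nu}}$, hence a fortiori the weaker stated bound with the $|\log\ep|^{-1}$ factor.

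For the operator-noise regime the signal $\boldsymbol q=\boldsymbol K\boldsymbol f$ is observed exactly and the only randomness is the white-noise observation $\boldsymbol{\dot{g}}_\de=\boldsymbol{\dot{g}}+\de\boldsymbol b$ of the kernel. I would fix a common signal $\boldsymbol q^\ast$ and construct a family of kernels $\boldsymbol g_\omega$, each a Toeplitz symbol vanishing to the right order at $z=1$ so that $\boldsymbol g_\omega\in\mathcal G_\nu(Q)$, together with the functions $\boldsymbol f_\omega=(\boldsymbol K_{\boldsymbol g_\omega})^{-1}\boldsymbol q^\ast$; since $\boldsymbol K_{\boldsymbol g_\omega}\boldsymbol f_\omega=\boldsymbol q^\ast$ for all $\omega$, the exactly observed signal carries no information and only the kernel channel can separate the hypotheses. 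The differences $\boldsymbol{\dot{g}}_\omega-\boldsymbol{\dot{g}}_{\omega'}$ would be supported on a block near the critical level and calibrated coordinatewise at the indistinguishable scale $\de$, so that the Fano budget $\mathrm{KL}\lesssim\log N$ is met; transporting them through the first-order relation $\boldsymbol f_\omega-\boldsymbol f_{\omega'}\approx -T\big((1/\boldsymbol{\dot{g}})^2\,(\boldsymbol{\dot{g}}_\omega-\boldsymbol{\dot{g}}_{\omega'})\big)\boldsymbol q^\ast$ and the Toeplitz calculus \eqref{Toeplitz product and inverse}, the amplification by $(1/\boldsymbol{\dot{g}})^2$, whose coefficients grow like $k^{2\nu-1}$, produces an $L^2$ separation of the announced order $\de^{\frac{s}{s+\nu}}|\log\de|^{-1}$; the logarithmic loss reflects the peak size $\asymp\sqrt{\ell\log\ell}$ of a random Toeplitz block (Proposition \ref{Toeplitz norm}), the same phenomenon that dictates the operator threshold $O_{\ell,\de}$.

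The hard part is the operator-noise construction, and precisely the simultaneous enforcement of three competing requirements. The perturbed symbols $\boldsymbol{\dot{g}}_\omega$ must remain $\de$-close in $\ell^2$ (to keep the hypotheses indistinguishable); they must stay inside $\mathcal G_\nu(Q)$ with \emph{both} constants $Q_1,Q_2$ controlled, which is exactly where the hypotheses $\nu>1/2$ and $Q_2\ge c_\nu Q_1$ enter, ensuring that the class is thick enough to host the perturbations without altering the order of the singularity at $z=1$ that fixes the decay $\boldsymbol\gamma_k\sim k^{\nu-1}$; and the induced functions $\boldsymbol f_\omega$ must remain in $\mathcal W^s(M)$ while being maximally separated. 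Carrying the amplification through the genuinely non-diagonal, triangular Toeplitz inverse $(\boldsymbol K^\ell)^{-1}$, rather than through a diagonal singular-value heuristic, is the delicate quantitative step: it is what pins the exponent to $\frac{s}{s+\nu}$ and forces the careful tracking of the logarithmic factor.
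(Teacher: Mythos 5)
Your global strategy (splitting into the regimes $\de=0$ and $\ep=0$, reducing to a testing problem at the critical level $L\asymp(\cdot)^{-1/(s+\nu)}$, and, in the operator regime, fixing $\boldsymbol{q}^\ast$ and varying the kernel so that the noiseless signal channel carries no information) is the same as the paper's. But the central quantitative claim of your signal-noise regime is a genuine gap: you assert that the forward operator ``damps the block at level $L$ by a factor $\asymp L^{-\nu}$'' as a ``forward counterpart of Assumption \ref{DIP}''. Assumption \ref{DIP} bounds $\|(\boldsymbol{K}^\ell)^{-1}\|_{\text{op}}\leq Q\ell^\nu$, i.e.\ it controls only the \emph{smallest} singular value of $\boldsymbol{K}^\ell$ from below by $Q^{-1}\ell^{-\nu}$; the largest singular value stays of order $|\check{\dot{\boldsymbol{g}}}_0|\vee\|\dot{\boldsymbol{g}}\|_{circ}\asymp 1$. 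Since $\boldsymbol{K}^\ell$ is a lower triangular Toeplitz matrix and not diagonal in the Laguerre basis, a generic Varshamov--Gilbert vertex $\sum_{\ell\in\mathcal{B}}\omega_\ell\gamma\boldsymbol{\phi}_\ell$ is \emph{not} aligned with the small singular directions: for $\dot{\boldsymbol{g}}(z)=(1-z)^r$ its image is an $r$-th finite difference of a rough $\pm\gamma$ sequence, whose norm is of order $\gamma\sqrt{|\mathcal{B}|}$, not $L^{-\nu}\gamma\sqrt{|\mathcal{B}|}$. The Kullback--Leibler budget then forces $L\gtrsim\ep^{-2/(2s+1)}$ and Fano only delivers the direct rate $\ep^{2s/(2s+1)}$, not $\ep^{s/(s+\nu)}$. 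The same alignment problem resurfaces in your operator regime, where the lower bound on the separation after transport by the triangular matrix $T\big((1/\dot{\boldsymbol{g}})^2\big)$ is not automatic for sign-alternating perturbations, because the convolution coefficients can cancel.

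The paper's fix is precisely to choose the perturbation \emph{as a preimage}: a two-point alternative $\boldsymbol{f}_1\propto\ep\,\boldsymbol{K}_{-\nu}\boldsymbol{h}$ (respectively a kernel perturbation $\boldsymbol{K}_\nu+c\de\boldsymbol{H}$), so that $\boldsymbol{K}_\nu\boldsymbol{f}_1\propto\ep\boldsymbol{h}$ exactly by the Toeplitz calculus \eqref{Toeplitz product and inverse} and the KL divergence is of order one by construction, while the separation is read off termwise from the constant-sign convolution $(1-z)^{-\nu}(1-z)^{-1/2}=(1-z)^{-\nu-1/2}$ (Lemma \ref{Lemma lower bound}). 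Note also that the logarithmic loss in the statement is not a reflection of the $\sqrt{\ell\log\ell}$ concentration of $\|\boldsymbol{B}^\ell\|_{\text{op}}$ as you suggest: it arises because the natural direction $(1-z)^{-1/2}$ is not square integrable ($\binom{-1/2}{\ell}\sim\ell^{-1/2}$), which forces the $\log^{-1}(\ell\vee 2)$ weights in the definition of $\boldsymbol{h}$ and hence the factor $|\log\ep|^{-1}$ (resp.\ $|\log\de|^{-1}$) in the separation. Your proposal never addresses this integrability constraint, and your claim to obtain the rate without the logarithm is therefore not supported by the construction. To repair the argument, replace the hypercube on raw Laguerre coefficients by perturbations of the form $\boldsymbol{K}_{-\nu}(\text{small, square-integrable, constant-sign bump})$; a two-point scheme then already suffices.
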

Combining Theorem \ref{Upper bound of Algorithm 1} together with Theorem \ref{Lower bound}, we conclude that our algorithm is minimax over $\mathcal{W}^s(M)$ to within logarithmic terms in
$\ep$ and $\de$, uniformly with regard to the blurring kernel $\boldsymbol{g}\in\mathcal{G}_\nu(Q)$.

\section{Practical performances}

In this section we study the practical performances of the two procedures developped above. Note that three potential sources of errors may contaminate the quality of the observations in \eqref{Discretized model with error} : the signal precision $\sigma\sqrt{\frac{T_n}{n}}$, the operator precision $\de$ and the design quality $\|\boldsymbol{\Omega_{\ell}}\|_{\text{op}}$. We shall hence emphasize their influence in the estimation of $\boldsymbol{f}$, as well as their respective interactions.
\\Our first aim is to study the interaction between the effect of signal and kernel noise in the two procedures of reconstruction. To this end, we will isolate them from the effect of the design, and suppose that the latter is ideally conditionned by setting $\boldsymbol{\Omega}_\ell=\boldsymbol{I}_\ell$. Let us start by a few precisions concerning the tuning parameters of \textbf{Algorithm I} and \textbf{II}. The setting up of these procedures requires the preliminary definition of $\lambda,\kappa,\tau_{sig}$ and $\tau_{\text{op}}$.
 \\\textbf{Tuning parameters}: for the definition of the maximal level of resolution, we set $\lambda=1$ for both algorithms. The concrete choice of adequate thresholding constants $\kappa$ and $\tau$ is a complex issue. Our practical choices will be based on the following remark, inspired by \citet{Donoho1994}: in the case of direct estimation on real line, the universal threshold which is both efficient and simple to implement, takes the form $2\sqrt{|\log \ep|}$. A consistent interpretation is to consider that this threshold should kill any pure noise signal. We will adapt this reasoning to the case of interest.
\\\underline{Choice of $\kappa$} : we use as a benchmark the case where $\boldsymbol{g}\equiv 0$. Given $\de$ large enough, we define $\kappa$ as the smallest value $\kappa_\de$ such that , for all $\ell\leq 10$, $\ind{\|(\boldsymbol{K}_\de^\ell)^{-1}\|_{\text{op}}< O_{\de,l}^{-1}}=0$. The results are reported in Table \ref{Choice of kappa} and give $\kappa=0.3$.
\\\underline{Choice of $\tau_{sig}$ and $\tau_{op}$}: It is clear that the role of  $\tau_{sig}$ and $\tau_{op}$ is to control the influence of the signal (resp.  the operator) error. To choose $\tau_{sig}$ (resp. $\tau_{op}$), we therefore set $\ep_{sig}>\de_{sig}>0$ (resp. $\de_{op}>\ep_{op}>0$) large enough. We resort to the case $\boldsymbol{f}\equiv 0$ as a benchmark: we have $\langle \boldsymbol{f},\boldsymbol{\phi}_\ell \rangle=0$ for $\ell\geq 1$, consequently the observations $\langle \boldsymbol{g}_{\ep_{sig}}, \boldsymbol{\phi}_\ell \rangle$, $\ell\geq0$ are pure noise. We hence simulate $\boldsymbol{K}_{\de_{sig}}$ and, integrating the precedently computed value of $\kappa$, apply the procedure for increasing values of $\tau_{sig}$ (resp. $\tau_{op})$ until all the computed coefficients $\langle \tilde{\boldsymbol{f}}^i,\boldsymbol{\phi}_\ell\rangle$ ($i=$\textbf{I},\textbf{II}) are killed for $\ell\leq 10$. The results are reported in Table \ref{Choice of tau}.

\begin{table}
\centering
\begin{tabular}{c|ccc}
$\kappa$ &0.1& 0.2&0.3
\\ \hline
$N$ &3&1&0
\end{tabular}
\caption{\footnotesize Choosing of $\kappa$. $N$ is the average number, computed on a basis of $10$ realizations, of levels $\ell\leq 10$ such that $\|(\boldsymbol{K}_\de^\ell)^{-1}\|_{\text{op}}< O_{\de,l}^{-1}(\kappa)$. We have $\de=10^{-2}$.}
\label{Choice of kappa}
\end{table}

\begin{table}
\centering
\begin{tabular}{c|cccccccc}
$\tau_{sig}$&0.3&0.4&0.5&0.6&0.7&0.8&0.9&1
 \\ \hline $N_{\textbf{I}}$&1&1&0&0&0&0&0&0
 \\ \hline $N_{\textbf{II}}$&7&5&4&3&2&1&1&0
\end{tabular}
\hspace{1cm}\begin{tabular}{c|c}
$\tau_{\text{op}}$&0.1
 \\ \hline $N_{\textbf{I}}$&0
  \\ \hline $N_{\textbf{II}}$&0
\end{tabular}
\caption{\footnotesize Choosing of $\tau$. For $(\de_{sig},\ep_{sig})=(\ep_{\text{op}},\de_{\text{op}})=(10^{-2},10^{-1})$ and each value of $\tau$, we computed $10$ times the described procedure and reported $N_i$ the average number of remaining Laguerre coefficients for \textbf{Algorithm} $i$.}
\label{Choice of tau}
\end{table}
\begin{figure}
\begin{center}
\subfigure[\textbf{Target function $\boldsymbol{f}_1$}]
    {\includegraphics[width=4.6cm,height=7cm]{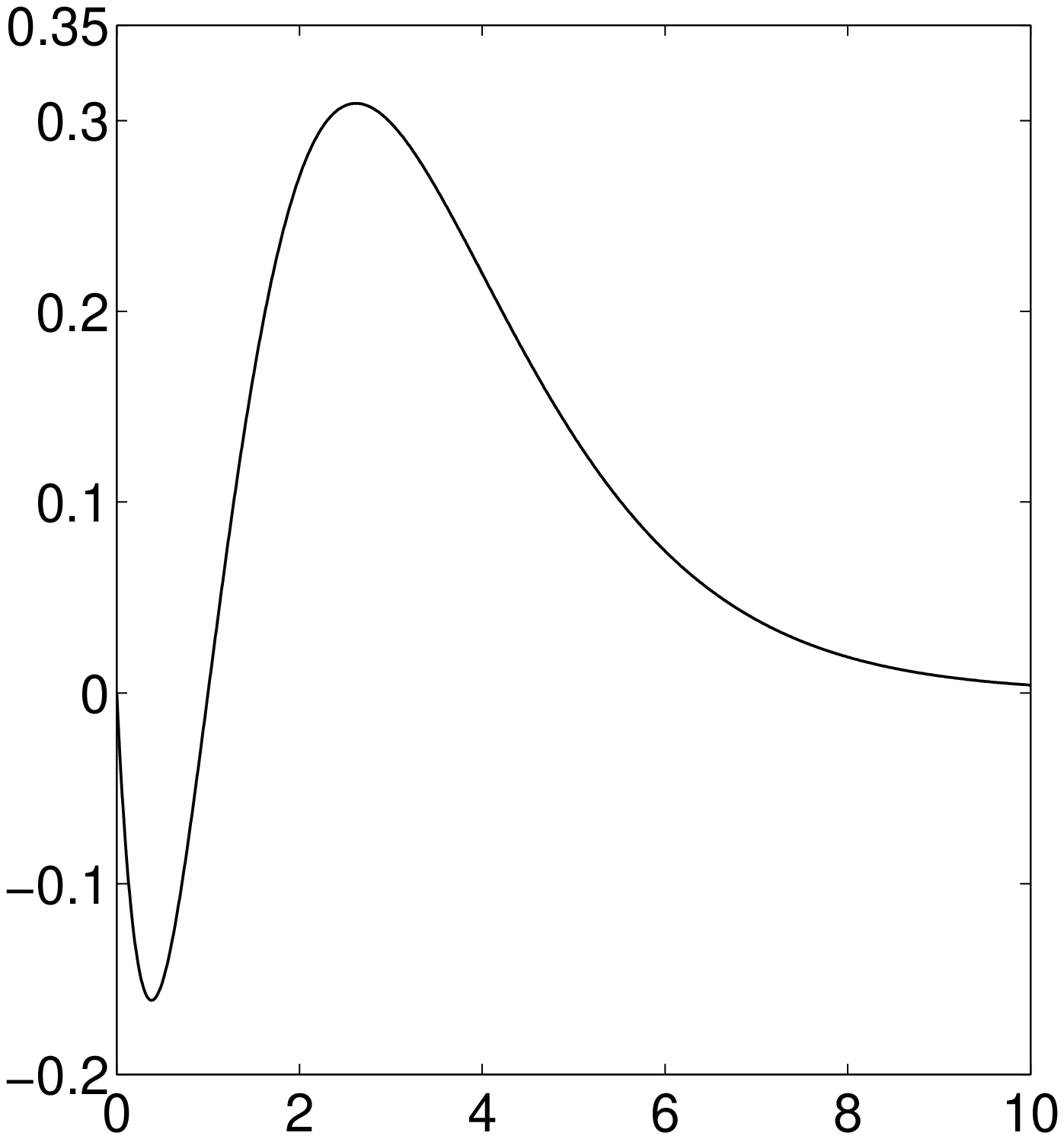}
    }    
\subfigure[\textbf{Kernel $\boldsymbol{g}$}]
    {\includegraphics[width=4.6cm,height=7cm]{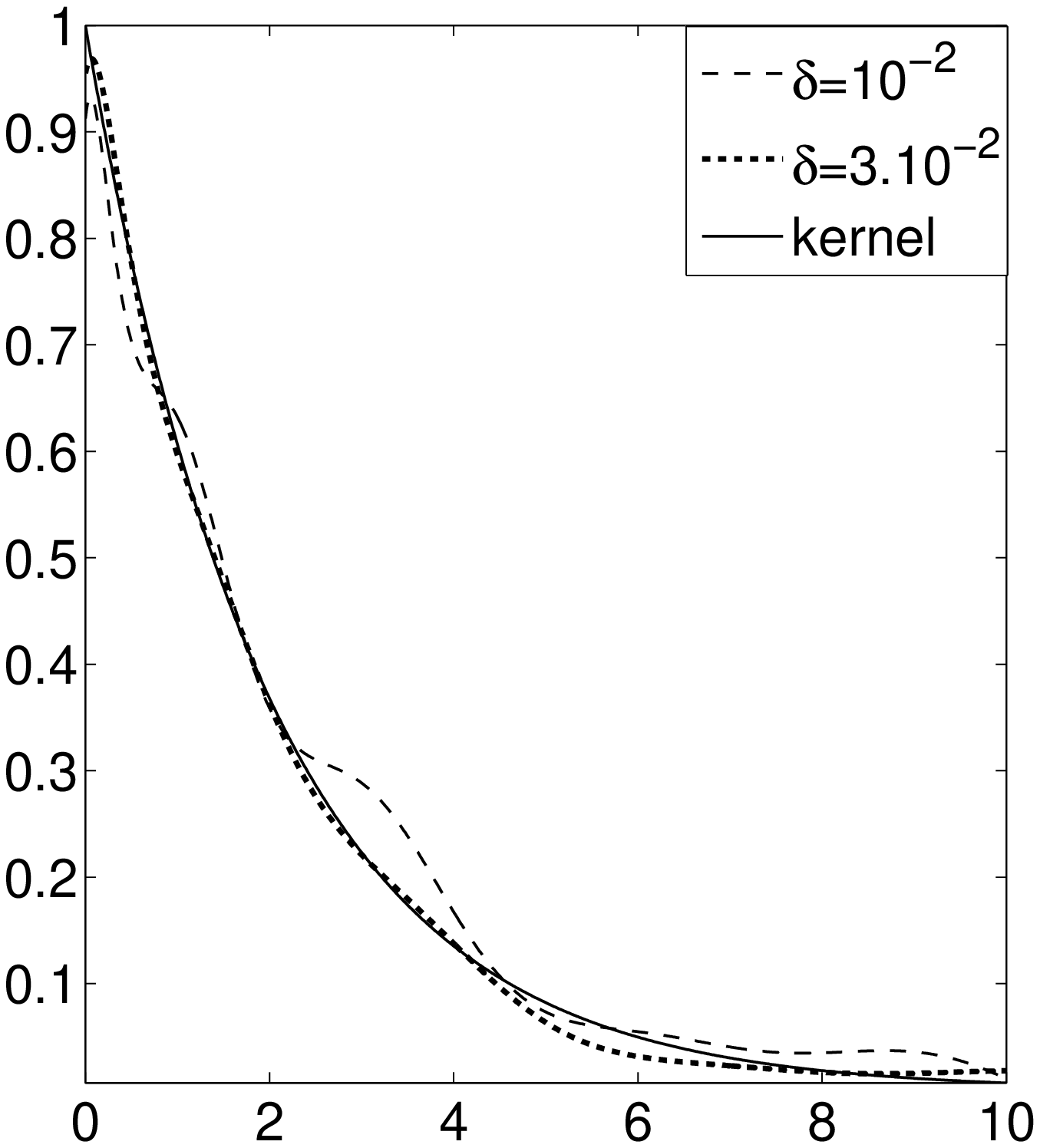}
    }
\subfigure[\textbf{$\boldsymbol{q=Kf}$}]
    {\includegraphics[width=4.6cm,height=7cm]{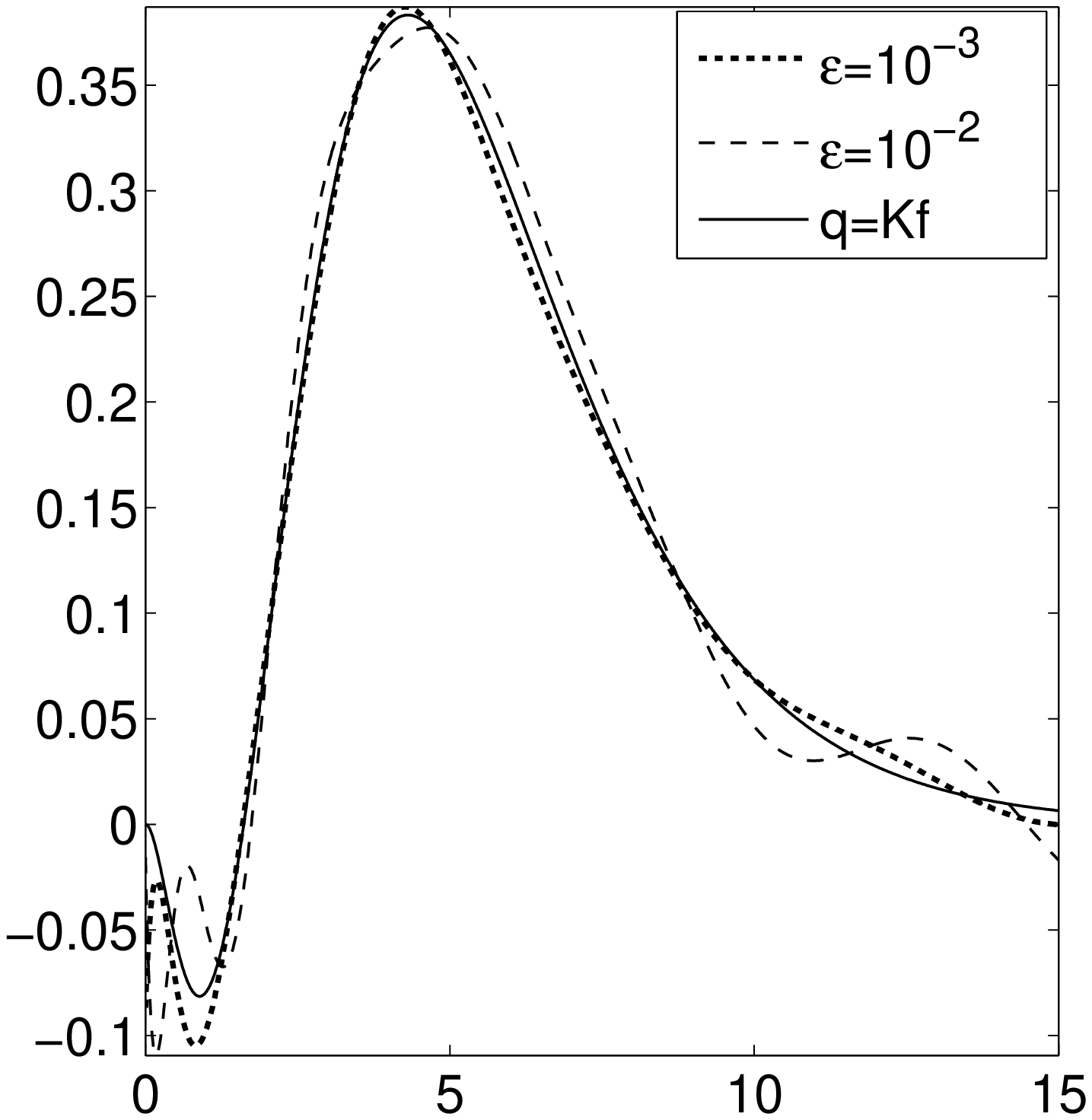}
    }
\caption{\footnotesize Datas and noisy observations of $\boldsymbol{g}$ and $\boldsymbol{q}$}
\label{Datas}
\end{center}
\end{figure}
We now apply the two procedures to the case where $\boldsymbol{f}_1(t)=(t^2-t)\exp(-t)$ and $\boldsymbol{g}=\boldsymbol{\phi}_0$ (a graphical representation of these two functions is presented in Figure \ref{Datas}). 

We have 
$$\big(1/\boldsymbol{\dot{g}}\big)(z) = (1-z)^{-1}= \Sum_{\ell \geq 0}z^\ell$$
hence Assumptions \ref{DIP} and \ref{DIP 2} are both satisfied taking $\nu=1$. For several values of $\ep$ and $\de$, we report the corresponding squared loss, computed on a basis of $500$ realisations with the use of Parseval's identity, in Table \ref{Squared loss}. The corresponding results are presented in Figure \ref{RegularPlots} for one particular realization of $\boldsymbol{\xi},\boldsymbol{b}$. The results indicate that the transition on the two types of errors occur when $\de$ is higher than $\ep$, translating a prevailing effect of the signal noise $\ep$ over the operator error $\de$ in practice. As Theorems \ref{Upper bound of Algorithm 1} and \ref{Upper bound of Algorithm 2} suggest, the second Algorithm overperforms the first in (almost) every case.
\renewcommand{\arraystretch}{1.2}
\begin{table}
\centering
\begin{tabular}{c|c|c|c|c||c|c|c|c|}
\cline{2-9}
&\multicolumn{4}{c||}{\textbf{Algorithm I}}&\multicolumn{4}{c|}{\textbf{Algorithm II}}
\\\hline
\multicolumn{1}{|c|}{\backslashbox{$\de$}{$\ep$}}&0&$10^{-3}$&$10^{-2}$&$3.10^{-2}$&0&$10^{-3}$&$10^{-2}$&$3.10^{-2}$
 \\ \hline \hline \multicolumn{1}{|c|}{$0$}&0&0.020&0.141&0.348&0&0.012&0.109&0.312
 \\\hline \multicolumn{1}{|c|}{$10^{-3}$}&0.004&0.020&0.141&0.352&0.005&0.012&0.108&0.301
 \\\hline \multicolumn{1}{|c|}{$10^{-2}$}&0.047&0.054&0.143&0.344&0.053&0.039&0.116&0.318
 \\\hline \multicolumn{1}{|c|}{$3.10^{-2}$}&0.170&0.169&0.190&0.348&0.118&0.109&0.145&0.324
\\\hline
\end{tabular}
\caption{\footnotesize Normalized mean squared error of the two procedures applied to the functions $\boldsymbol{f}_1$ and $\boldsymbol{g}$. The computations were performed using a monte carlo method on $500$ realizations.}
\label{Squared loss}
\end{table}

\begin{figure}
\centering
\begin{center}
\subfigure[\textbf{Algorithm I}]
    {\includegraphics[width=7cm,height=7cm]{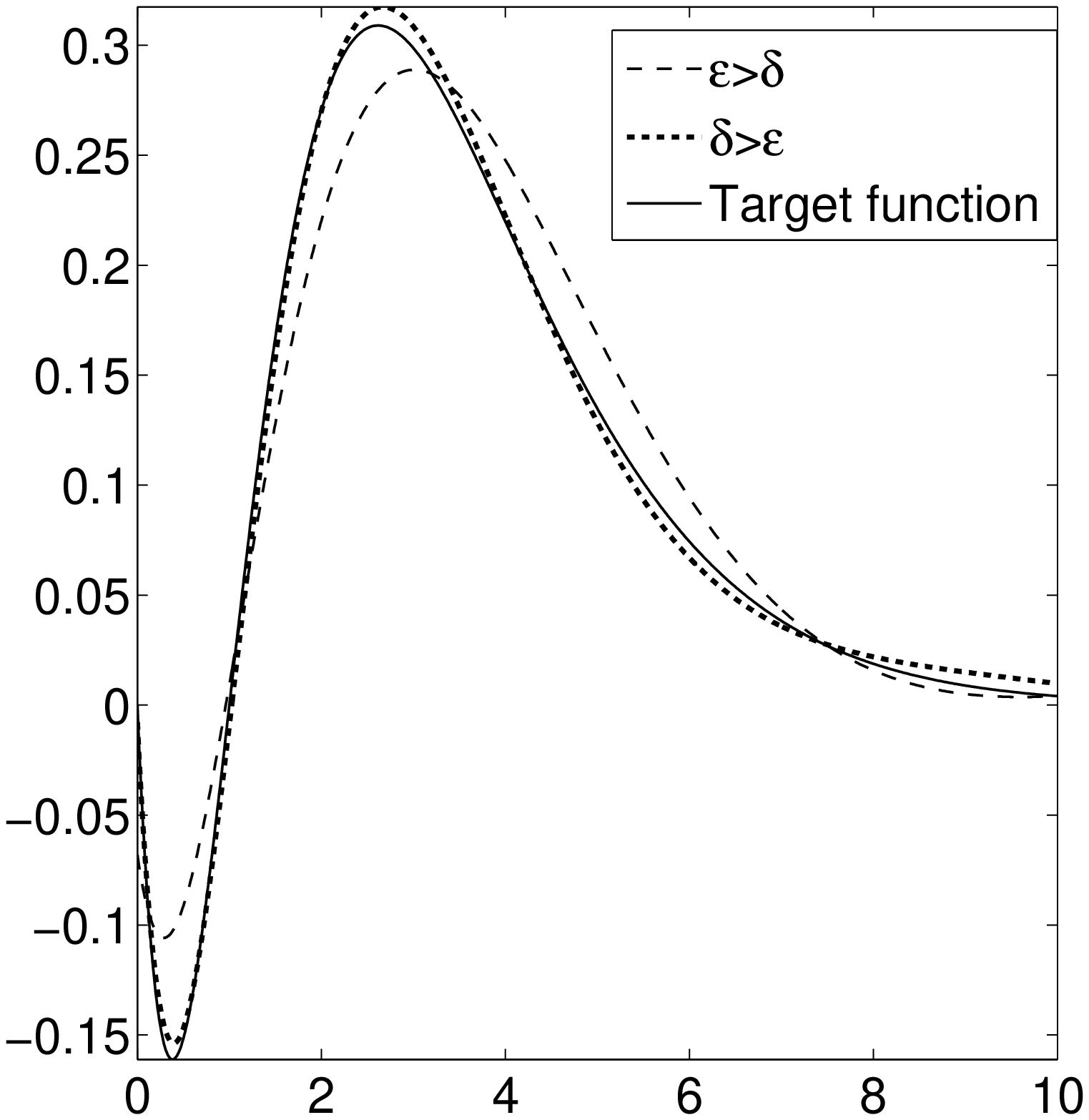}
    }    
\subfigure[\textbf{Algorithm II}]
    {\includegraphics[width=7cm,height=7cm]{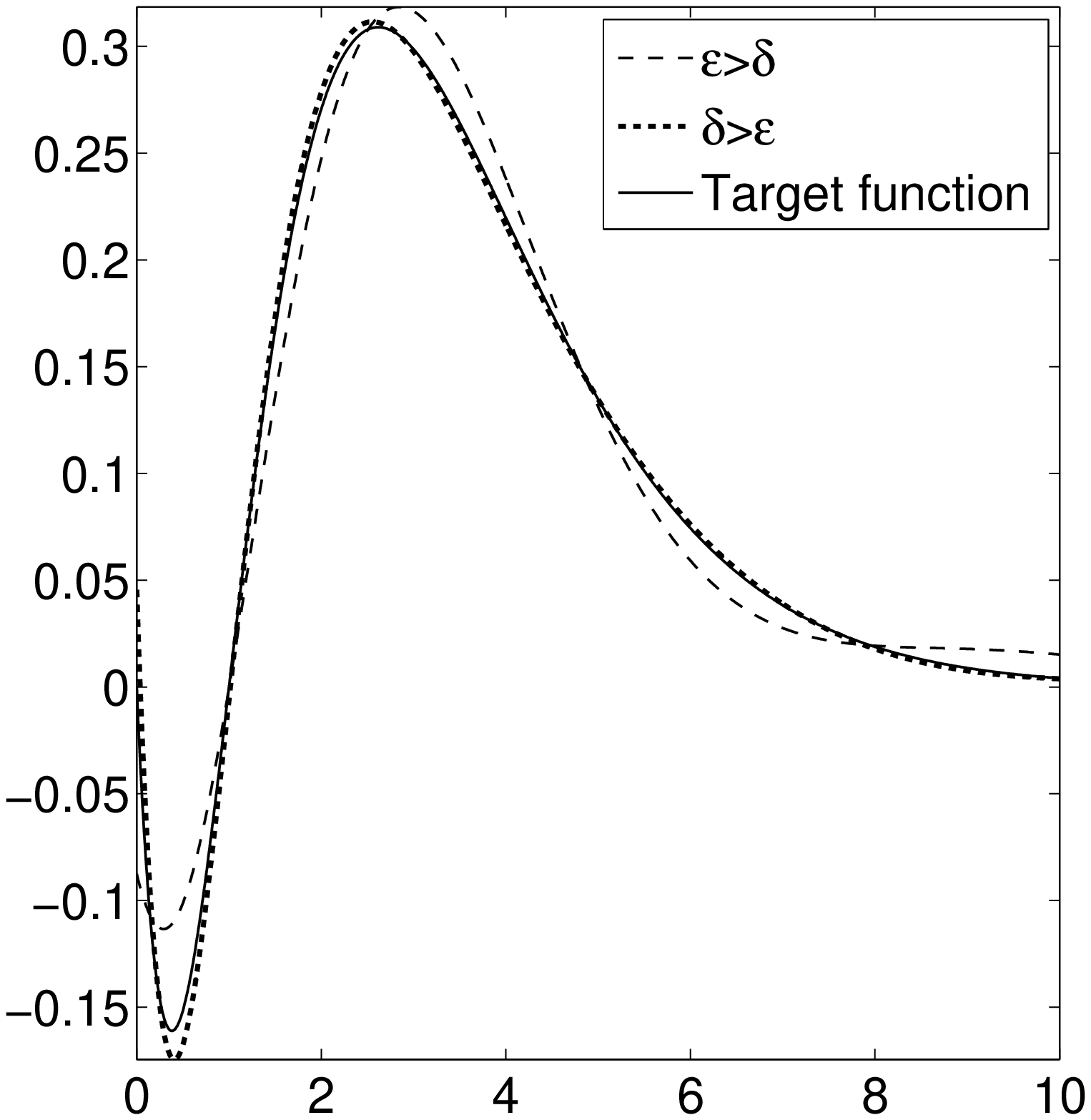}
    }
\caption{\footnotesize Estimation of $\boldsymbol{f}_1$ for predominant signal noise $(\ep,\de)=(10^{-2},10^{-3})$ and predominant operator noise $(\de,\ep)=(10^{-3},10^{-2})$.}
\label{RegularPlots}
\end{center}
\end{figure}

\textbf{Discussion on the design irregularity}: to control the squared risk of the two procedures, one needs condition \ref{Design regularity} to be fulfilled. If not, the eigenvalues of the matrix $\boldsymbol{\Omega}_L$ become potentially too large, and observations \eqref{Discretized model} are not conveniently treatable. In this case, it is preferable to lower the maximal level down to a point where $\|\boldsymbol{\Omega}_L\|_{op}$ remains under control. To this end, we change the maximal level of the two respective procedures to 
$$N^{i}=L^i\wedge \max\{\ell\geq 0\text{ s.t. }\|\boldsymbol{\Omega}_\ell\|_{op}\leq \alpha \},\hspace{0.3cm} i=\textbf{I},\textbf{II} $$
where $\alpha$ is an arbitrary thresholding constant, set to $1.5$ in the sequel. We now fix $\sigma=\de=10^{-2}$ and chose the design points $t_i$ as $t_i=100i/n$ for $n=200,250,750$ and $1000$. Taking the same kernel $\boldsymbol{g}=\boldsymbol{\phi}_0$, and setting $\boldsymbol{f}_2(t)=(t^{1/2}-t)\exp(-t)$, we compare the performances of the new choice $N^i$ to the previous one $L^i$, by computing the respective mean squared losses on a basis of $500$ observations and report the result in Table \ref{Squared loss design}. The results show a minor effect of the design ill-posedness on \textbf{Algorithm I}, since $L^i$ is usually already smaller than $N^i$. However, the gain is notable for \textbf{Algorithm II} when $n\leq 250$. To illustrate this point, we plot in Figure \ref{Design Plots} the corresponding results when $n=200$.

\renewcommand{\arraystretch}{1.5}
\begin{table}
\centering
\begin{tabular}{c|c||c|c|c|c|c}
\cline{2-6} &\backslashbox{}{$n=$}&200&250&500&750
 \\ \hline 
\multicolumn{1}{|c|}{\multirow{3}{*}{\textbf{Algorithm I}}}&$\big(L^\textbf{I},N^\textbf{I}\big)$&(6,6)&(6,6)&(6,6)&(6,6)
\\\cline{2-6}\multicolumn{1}{|c|}{}&\textbf{MSE}, $L^\textbf{I}$&0.273&0.270&0.264&0.258
 \\ \cline{2-6} \multicolumn{1}{|c|}{}
&\textbf{MSE}, $N^\textbf{I}$&0.275&0.272&0.264&0.257
 \\ \hline \hline
 \multicolumn{1}{|c|}{\multirow{3}{*}{\textbf{Algorithm II}}}&$\big(L^\textbf{II},N^\textbf{II}\big)$&(37,12)&(37,15)&(37,27)&(37,27)
\\\cline{2-6}\multicolumn{1}{|c|}{}&\textbf{MSE}, $L^\textbf{II}$&1.336&0.559&0.289&0.253
 \\ \cline{2-6}
\multicolumn{1}{|c|}{}&\textbf{MSE}, $N^\textbf{II}$&0.294&0.291&0.284&0.256
 \\ \hline 
 \end{tabular} 
\caption{\footnotesize Normalized mean squared error of the two procedures when the design is constituted of $200$ equispaced points on the interval $[0;100]$. We compare the performances of the two maximal resolution levels $L^i$ and $N^i$ for the parameters $\sigma=\de=10^{-2}$, $\boldsymbol{g}=\boldsymbol{\phi}_0$ and $\boldsymbol{f}_2(t)=(t^{1/2}-t)\exp(-t)$.}
\label{Squared loss design}
 \end{table}

\begin{figure}
\begin{center}
\subfigure[\textbf{Algorithm I}]
    {\includegraphics[width=7cm,height=7cm]{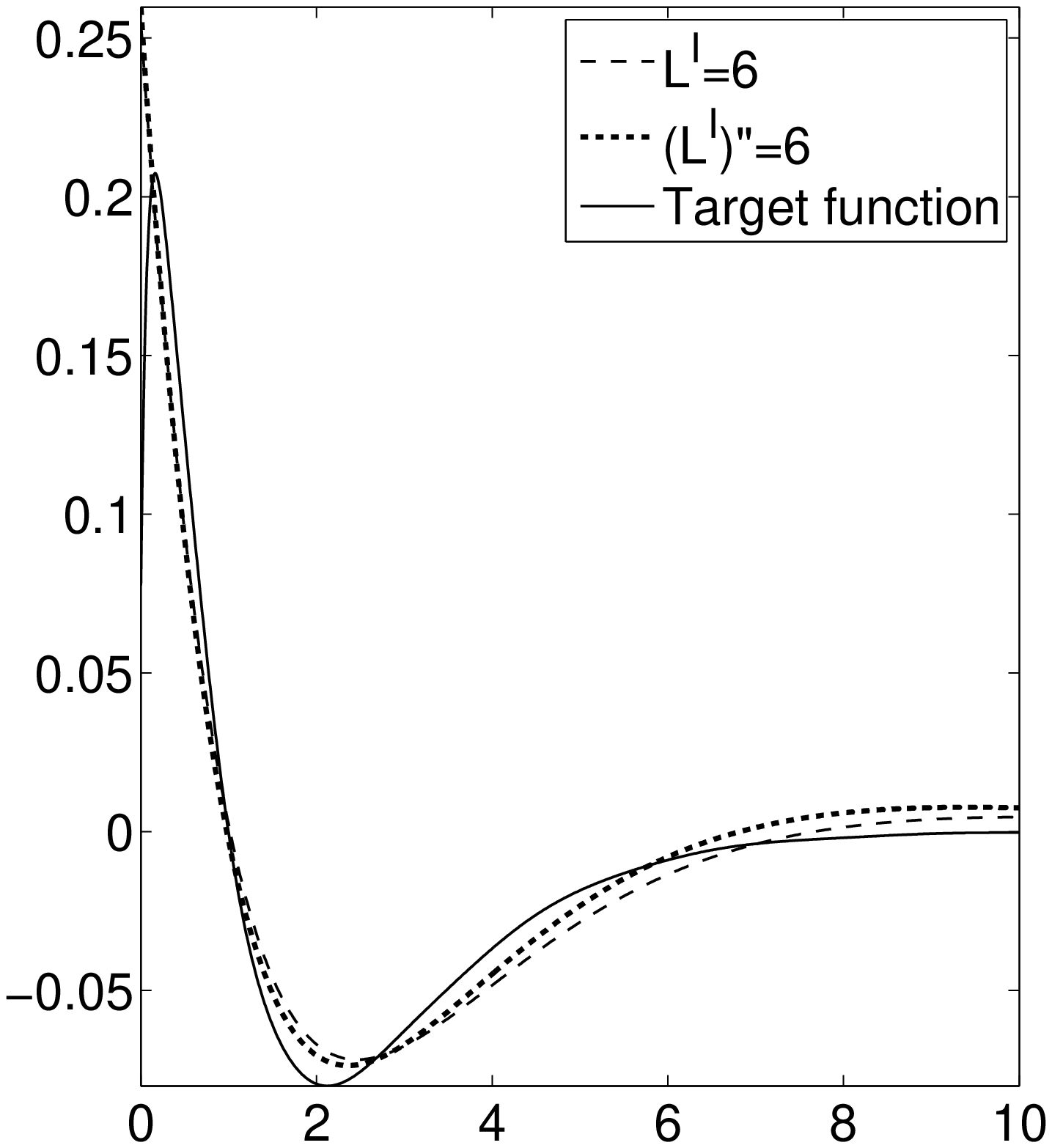}
    }    
\subfigure[\textbf{Algorithm II}]
    {\includegraphics[width=7cm,height=7cm]{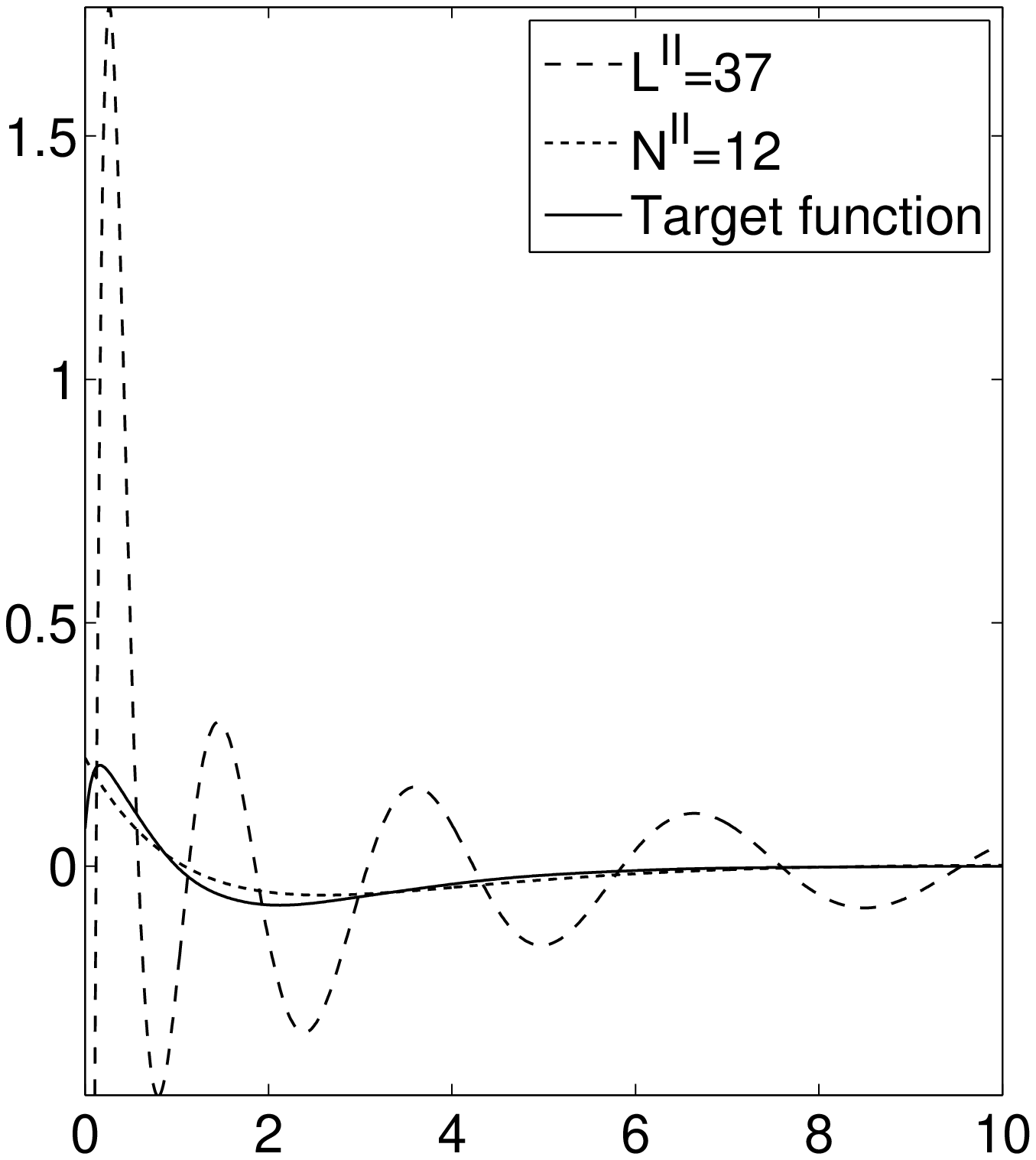}
    }
\caption{\footnotesize Result of the two different maximal levels $L^i$ and $N^i$ to estimate $\boldsymbol{f}_2$, for a particular realizaton of $\boldsymbol{b}$ and $\boldsymbol{\xi}$. The design is constituted of $200$ equidistant points of observations in $[0;100]$. The noise levels are $\sigma=\delta=10^{-2}$.}
\label{Design Plots}
\end{center}
\end{figure}

\subsection*{Back to the regression model}

We now turn back to the original model \eqref{Laplace deconvolution reg} to apply the two procedures. It is well known that this model is asymptotically equivalent to \eqref{Discretized model}, in the sense that a fine enough design will provide an estimation of the Laguerre coefficients with a negligible error when $n\to\infty$. We work with $\boldsymbol{f}_3(z)=(1-z)^{1/2}$ , $\boldsymbol{g}=\boldsymbol{\phi}_0$, $\de=10^{-2}$, and suppose that the design is constituted of the points $t_i=\Sum_{j=1}^i (\text{step}+ |X_j|)$ where $(X_j)_{j\leq n}$ is an i.i.d sequence of $\mathcal{N}(0,10^{-2})$ variables. We observe the noisy values $\boldsymbol{y}(t_i)=\boldsymbol{q}(t_i)+ \sigma \eta_i$ where $\boldsymbol{q}(z)=(1-z)^{3/2}$, and compute the Laguerre coefficients $\check{\boldsymbol{q}}_\ell$ via the approximation
$$\check{\boldsymbol{q}}_\ell \sim \Sum_{i=1}^{n-1} \frac{\boldsymbol{q}(t_i)\boldsymbol{\phi}_\ell(t_i)+\boldsymbol{q}(t_{i+1})\boldsymbol{\phi}_\ell(t_{i+1})}{2}(t_{i+1}-t_i)$$ 
We apply the two procedures and present the results on Figure \ref{Regression Plots}.

\begin{figure}
\centering
\begin{center}
\subfigure[$\text{step}=5.10^{-1};\,n=30 \atop (\text{MSE\textbf{I}},\text{MSE\textbf{II}})=(0.166,0.177)$]
    {\includegraphics[width=7cm,height=7cm]{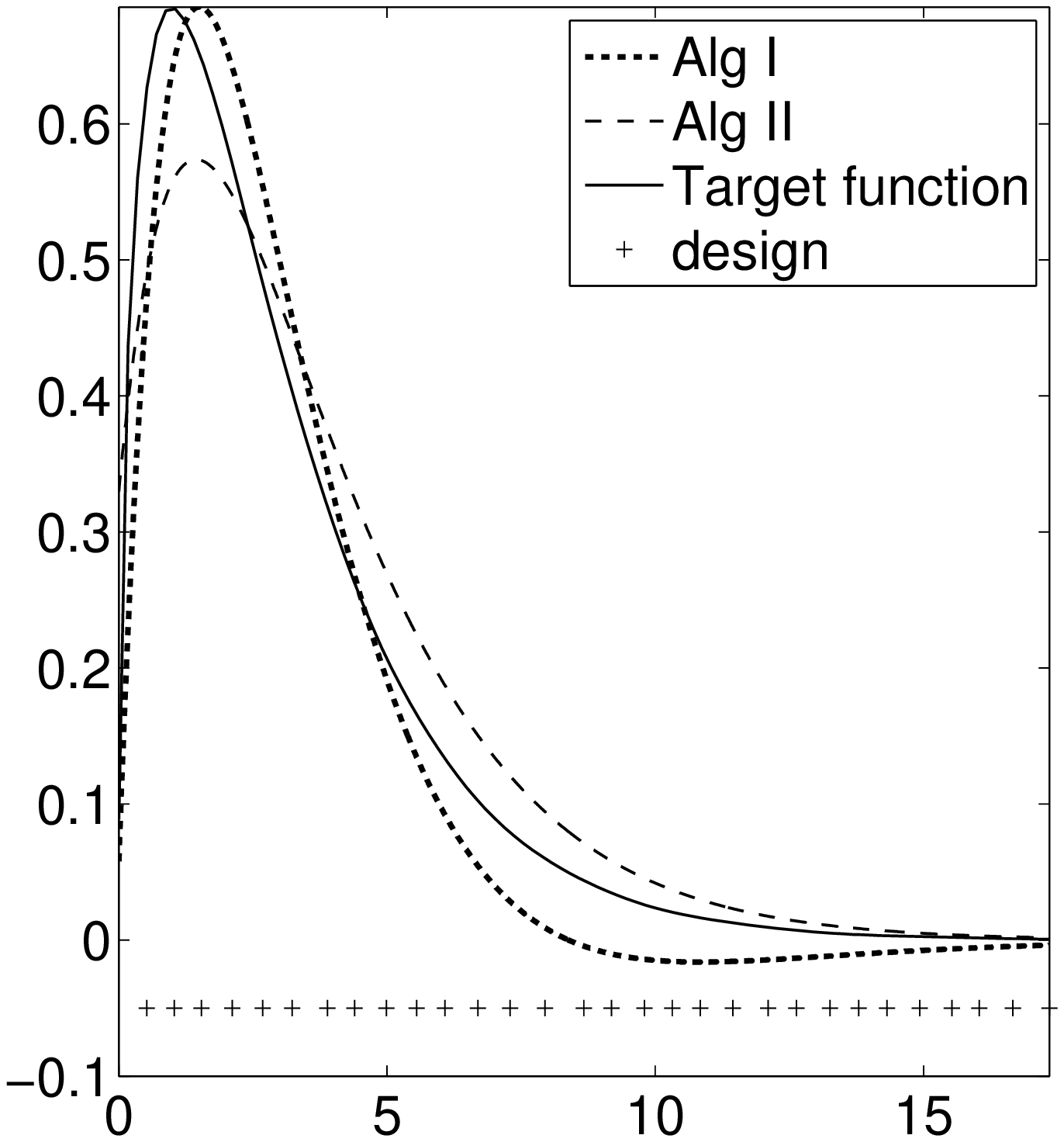}
    }    
\subfigure[{\footnotesize $\text{step}=10^{-1};\, n=100\atop (\text{MSE\textbf{I}},\text{MSE\textbf{II}})=(0.118,0.133)$}]
    {\includegraphics[width=7cm,height=7cm]{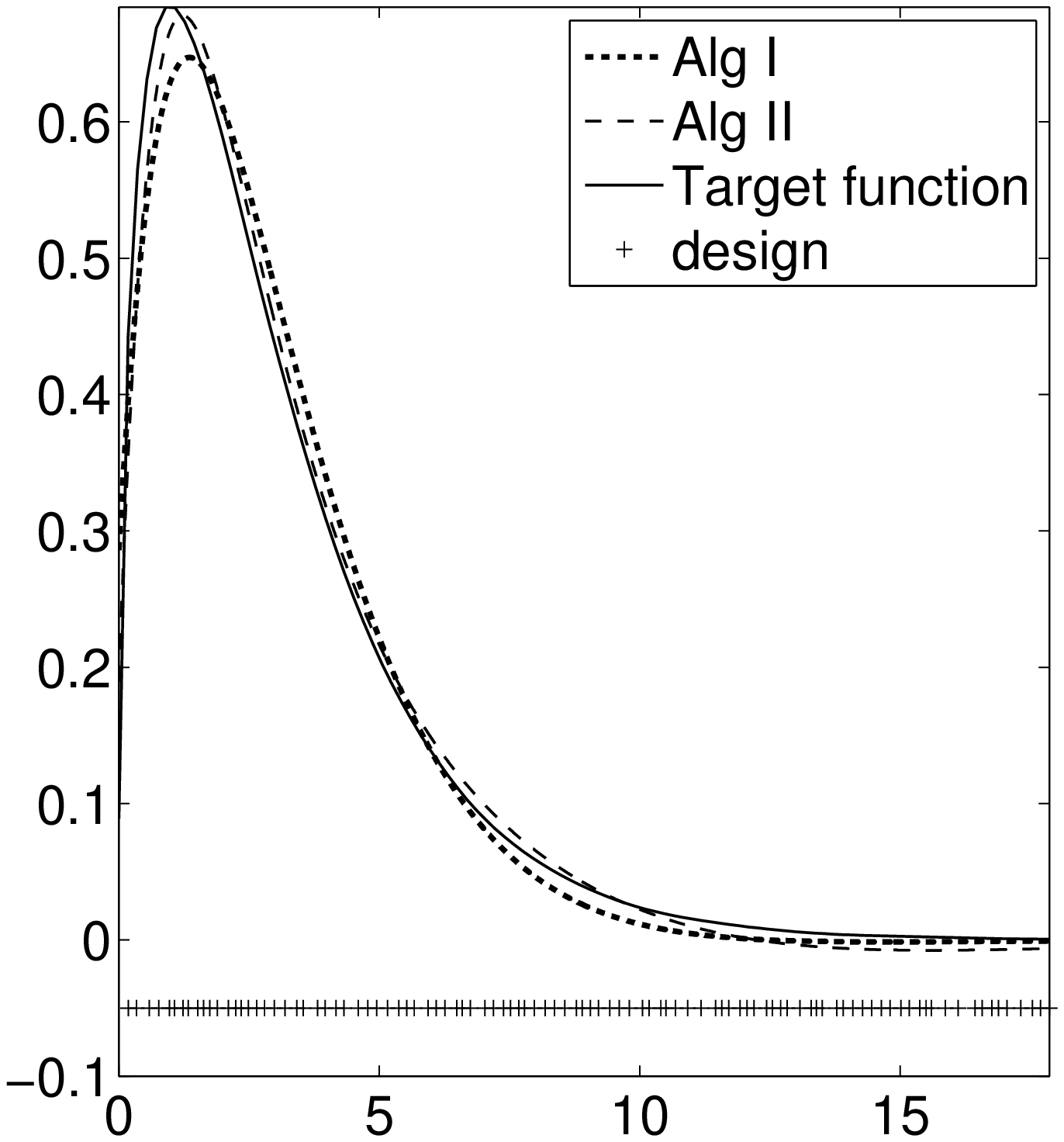}
    }
\caption{\footnotesize Adaptation of the procedures to the regression framework. Here, MSE denotes the normalized mean squared error for each algorithm (computed with 500 realizations). The target function is $\boldsymbol{f}_3$ and the noise levels are $\sigma=\de=5.10^{-2}$.}
\label{Regression Plots}
\end{center}
\end{figure}
\section{Proofs}
\label{Proofs}
In the sequel, for the sake of clarity, we suppose that $\tau_{sig}=\tau_{\text{op}}\overset{\Delta}{=}\tau$.
\subsection{Proof of Proposition \ref{DIP 2 Proposition}}
\begin{proof}
We can restrict ourselves to the case where $\mu=1$. Proposition \ref{Toeplitz product and inverse} applied to equality \eqref{Kernel decomposition} entails
\begin{align*}
\forall \ell \geq 0,\; T_\ell\big((1/\boldsymbol{\dot{g}})\big)&=C^{-1} T_\ell\big(w^{-1}\big)T_\ell\big((1-z)^{-\nu}\big)
\\T_\ell\big((1-z)^{-\nu}\big)&=C T_\ell\big(w\big)T_\ell\big((1/\boldsymbol{\dot{g}})\big)
\end{align*}
As a consequence, 
\begin{align}
\label{L2 Prop} \Sum_{k=0}^\ell \boldsymbol{\gamma}_k ^2=\|(1/\boldsymbol{\dot{g}})^\ell\|^2 &\leq C^{-1} \|T_\ell(w^{-1})\|_{\text{op}} \|(1-z)^{-\nu}_\ell\|^2 
\\ \label{HS Prop}  \text{and}\hspace{0.5cm}\|(\boldsymbol{K}^\ell)^{-1}\|_{\text{HS}} &\geq C^{-1} \|T_\ell\big(w\big)\|_{\text{op}}^{-1} \|T_\ell\big((1-z)^{-\nu}\big)\|_{\text{HS}}
\end{align}
Since $w$ is assumed to have no zeros on $\mathcal{C}$, we deduce from Proposition \ref{Toeplitz norm} that both $\|w^{-1}\|_{circ}$ and $\|w\|_{circ}$ are finite, and from \eqref{Toeplitz norm limit} that
$$\|T_\ell\big(w^{-1}\big)\|_{\text{op}}\asymp 1 \text{ and } \|T_\ell\big(w\big)\|_{\text{op}} \asymp 1$$
It remains to treat the binomial serie $(1-z)^{-\nu}$. This serie can be expanded as 
$$ (1-z)^{-\nu} = \Sum_{\ell \geq 0} (-1)^\ell \binom{-\nu}{\ell} z^\ell$$
, where $\binom{-\nu}{\ell}\overset{\Delta}{=}\frac{\Gamma(-\nu+1)}{\Gamma(\ell+1) \Gamma(-\nu-\ell+1)}$ is the generalized binomial coefficient. Furthermore, we have
 \begin{align}\label{Binom equiv}
 \binom{-\nu}{\ell}\underset{\ell\to \infty}{\sim}\frac{(-1)^\ell}{\Gamma(\nu)\ell^{-\nu+1}}
 \end{align}
which is a direct consequence of Euler's definition of the Gamma function $\Gamma(z)=\displaystyle{\lim_{k\to \infty} \frac{k!k^z}{\Pi_{i=0}^k(z+i)}} $. Since $\nu>1/2$, the serie $\Sum_k \binom{-\nu}{k}^2$ is hence divergent, and there exists $\widetilde{Q}_2,\widetilde{Q}_1>0$ such that, for all $\ell \geq 0$, 
$$\Sum_{k=0}^\ell \binom{-\nu}{k}^2 \leq \widetilde{Q}_1 (\ell\vee 1)^{2\nu-1} \hspace{0.5cm} \text{and}\hspace{0.5cm} \Sum_{k=0}^\ell \Sum_{n=0}^k \binom{-\nu}{n}^2 \geq \widetilde{Q}_2 (\ell\vee 1)^{2\nu}$$
The proof is complete thanks to \eqref{L2 Prop} and \eqref{HS Prop}.
\end{proof}
\subsection{Proofs of theorems \ref{Upper bound of Algorithm 1} and \ref{Upper bound of Algorithm 2}}
\subsubsection{Preliminary lemmas} We begin with the following lemmas. Lemma \ref{Operator concentration} is a concentration inequality on the variable $\|\boldsymbol{{B}}^\ell\|_{\text{op}}$, which results from a concentration inequality on subgaussian processes. Lemma \ref{Neumann} states that $\|(\boldsymbol{K}_\de^\ell)^{-1}\|_{\text{op}}$ behaves as $\|(\boldsymbol{K}^\ell)^{-1}\|_{\text{op}}$ on a set with large probability.
Finally, Lemma \ref{Deviations} establishes deviations bounds on the variables $\boldsymbol{\zeta}_\ell-\check{\boldsymbol{f}_\ell}$ which will be useful throughout the proofs of Theorem \ref{Upper bound of Algorithm 1} and Theorem \ref{Upper bound of Algorithm 2}. 
\begin{lemma}
\label{Operator concentration}
There exists $\beta_0$, $c_0$ independent from $\ell\geq 0$, such that, for all $\ell\geq 0$, for all $t\geq \beta_0$,
$$\PP\Big(\frac{1}{\sqrt{(\ell\vee 1)\log (\ell\vee 2)}}\|\boldsymbol{{B}}^\ell\|_{\text{op}}>t\Big)\leq \exp(-c_0t^2)$$
This readily entails the following moments control, available for all $\ell\geq 0$, $p\geq1$ 
$$\E\|\boldsymbol{{B}}^\ell\|_{\text{op}}^p\lesssim \big(\ell\log \ell\big)^{p/2}\vee 1  $$
\end{lemma}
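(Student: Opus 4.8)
The plan is to dominate $\|\boldsymbol{B}^\ell\|_{\text{op}}$ by the supremum of a random trigonometric polynomial and then combine a sharp bound on its expectation with Gaussian concentration. Recall that $\boldsymbol{B}^\ell=T_\ell(\boldsymbol{b})$ is the $(\ell+1)\times(\ell+1)$ lower triangular Toeplitz matrix built from the i.i.d.\ standard gaussian Laguerre coefficients $\boldsymbol{b}_0,\dots,\boldsymbol{b}_\ell$. Writing $\tilde{\boldsymbol{b}}=(\boldsymbol{b}_0,\dots,\boldsymbol{b}_\ell,0,0,\dots)\in\ell^1(\mathbb{Z})$, the matrix $\boldsymbol{B}^\ell$ is the top-left principal block of $T(\tilde{\boldsymbol{b}})$, so that $\|\boldsymbol{B}^\ell\|_{\text{op}}\le\|T(\tilde{\boldsymbol{b}})\|_{\text{op}}$. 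Proposition \ref{Toeplitz norm} then gives $\|\boldsymbol{B}^\ell\|_{\text{op}}\le F(\boldsymbol{b})$, where $F(\boldsymbol{b})=\sup_{\theta}\big|\sum_{k=0}^\ell \boldsymbol{b}_k e^{ik\theta}\big|$. It therefore suffices to bound the upper tail of $F(\boldsymbol{b})$.

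First I would control the fluctuations of $F$ around its mean. As a supremum of linear forms in $\boldsymbol{b}$, $F$ is a seminorm, and by Cauchy--Schwarz $|F(\boldsymbol{b})-F(\boldsymbol{b}')|\le\sup_\theta|\sum_{k}(\boldsymbol{b}_k-\boldsymbol{b}'_k)e^{ik\theta}|\le\sqrt{\ell+1}\,\|\boldsymbol{b}-\boldsymbol{b}'\|_2$, so $F$ is $\sqrt{\ell+1}$-Lipschitz on $\mathbb{R}^{\ell+1}$. The Gaussian concentration (Borell--TIS) inequality then yields $\PP(F>\E F+r)\le\exp\!\big(-r^2/(2(\ell+1))\big)$ for every $r\ge0$; this is the subgaussian-process estimate alluded to in the statement, and it is where the variance proxy $\ell+1$ enters.

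The key quantitative step is the bound $\E F(\boldsymbol{b})\le C_1\sqrt{(\ell\vee1)\log(\ell\vee2)}$. I would obtain it by a discretization argument: fix a uniform grid $\theta_1,\dots,\theta_N$ of $[0,2\pi)$ with $N\asymp\ell$ points, so that the spacing is at most $1/(2\ell)$. Bernstein's inequality for trigonometric polynomials, $\|P'\|_\infty\le\ell\|P\|_\infty$, lets one transfer the supremum to the grid: $\|P\|_\infty\le 2\max_{j\le N}|P(\theta_j)|$ for $P(\theta)=\sum_{k\le\ell}\boldsymbol{b}_k e^{ik\theta}$. Each $P(\theta_j)$ is a gaussian with $\E|P(\theta_j)|^2=\ell+1$, so the standard maximal inequality for $N$ gaussians gives $\E\max_j|P(\theta_j)|\lesssim\sqrt{(\ell+1)\log N}\lesssim\sqrt{(\ell+1)\log(\ell+2)}$, whence the claim. (Equivalently one may invoke Dudley's entropy bound, or the known order $\sqrt{n\log n}$ for the spectral norm of an $n\times n$ random Toeplitz matrix.) This estimate --- reconciling the entropy of the index set with the $O(\ell)$ pointwise variance so as to produce the extra logarithmic, rather than polynomial, factor --- is the main obstacle.

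It remains to assemble the pieces and rescale. Set $L=(\ell\vee1)\log(\ell\vee2)$ and $\beta_0=2C_1$. For $t\ge\beta_0$ one has $t\sqrt{L}-\E F\ge(t/2)\sqrt{L}$, so the Lipschitz concentration bound gives
$$\PP\Big(\tfrac{\|\boldsymbol{B}^\ell\|_{\text{op}}}{\sqrt L}>t\Big)\le\PP\big(F>\E F+\tfrac t2\sqrt L\big)\le\exp\!\Big(-\frac{t^2 L}{8(\ell+1)}\Big).$$
Since $L/(\ell+1)\ge\frac12\log2$, the exponent is at most $-c_0 t^2$ with $c_0=\tfrac{1}{16}\log2$, which is the announced inequality. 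Finally, writing $Y=\|\boldsymbol{B}^\ell\|_{\text{op}}/\sqrt L$ and integrating this tail, $\E Y^p=\int_0^\infty p t^{p-1}\PP(Y>t)\,dt\le\beta_0^p+\int_{\beta_0}^\infty p t^{p-1}e^{-c_0 t^2}\,dt\lesssim 1$, so that $\E\|\boldsymbol{B}^\ell\|_{\text{op}}^p\lesssim L^{p/2}=\big((\ell\vee1)\log(\ell\vee2)\big)^{p/2}\lesssim(\ell\log\ell)^{p/2}\vee1$, which is the stated moment control.
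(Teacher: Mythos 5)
Your argument is correct and follows essentially the same route as the paper's (which adapts Meckes): dominate $\|\boldsymbol{B}^\ell\|_{\text{op}}$ by the sup-norm of the random symbol $\sum_{k\le\ell}\boldsymbol{b}_k e^{ik\theta}$ via Proposition \ref{Toeplitz norm}, show that the expectation of this supremum is $O(\sqrt{(\ell\vee1)\log(\ell\vee2)})$, and conclude by Gaussian concentration around the mean with variance proxy of order $\ell$. The only difference is that you derive the expectation bound through an explicit grid discretization combined with Bernstein's inequality for trigonometric polynomials, whereas the paper invokes Dudley's entropy bound with the metric estimate $d(x,y)\lesssim\ell^{3/2}|x-y|\wedge\sqrt{\ell}$; these are equivalent computations yielding the same logarithmic factor.
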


\begin{proof}
The proof is a slight modification of \citet[Theorem 1]{Meckes}, to which we refer for a complete study. Lemma \ref{Operator concentration} is trivially satisfied if $\ell=0,1$, hence we will suppose that $\ell\geq 2$. From Proposition \ref{Toeplitz norm}, we derive that 
$$\E\|\boldsymbol{{B}}^\ell\|_{\text{op}}\leq \E\|T\big((\boldsymbol{b})_k\big)\|_{\text{op}}=\E\sup_{x\in[0,1]}|Y_x|,\hspace{0.5cm}Y_x\overset{\Delta}{=}\Sum_{k=0}^{\ell} \boldsymbol{b}_\ell e^{2i\pi kx} $$
 We claim the two following facts:
\begin{itemize}
\item Let $a_0,...,a_\ell \in \R$. There exists $c\geq 0$ such that ofor all $t>0$, 
\begin{equation}
\label{ineq}
\PP\big(\big|\Sum_{k=0}^{\ell} a_k \boldsymbol{b}_k\big|>t\big) \leq \exp\big(\frac{-ct^2}{\sum_{k=0}^{\ell} a_k^2}\big)
\end{equation}
\item $d(x,y)\overset{\Delta}{=}\sqrt{\E|Y_x-Y_y|^2} \leq 4 \ell^{3/2}|x-y|\wedge 2\sqrt{\ell}$
\end{itemize}
The first point is readily verified since $(\boldsymbol{b}_k)_{k\leq \ell}$ is a standard Gaussian vector, while the second point directly results from the bound $$\big|e^{2i\pi k x}-e^{2i\pi k y} \big|\leq 2\wedge 2\pi k |x-y| \text{ for all } x,y\in[0,1],\, k\geq 0$$
A direct application of Dudley's entropy bound (\citet[Proposition 2.1]{Talagrand}) now entails 
$$\E\sup_{x\in[0,1]}|Y_x|\lesssim (\ell\log \ell)^{1/2}$$
(see \citet{Meckes} for the rest of the proof). The deviation bound is now a consequence of \citet[Lemma 5.3]{Talagrand}. Indeed, for all $x\in[0,1]$, 
$$E|Y_x|^2= \E\big|\Sum_{k=0}^{\ell} \boldsymbol{b}_k e^{2i\pi kx}\big|^2 \lesssim \ell$$
which ends the proof.
\end{proof}

\begin{lemma}
\label{Neumann}
Let $\ell\geq 0$, $a_\ell = \rho O_{\ell,\de}$ for some $0 < \rho < \tfrac{1}{2}$. Note $\boldsymbol{\gamma}_\de (z)=\Sum_{\ell \geq 0} \boldsymbol{\gamma}_{k,\de}z^k $ the power series associated to $(\boldsymbol{K}_{\delta}^\ell)^{-1}$. On $\boldsymbol{A}_\ell \overset{\Delta}{=} \{\|(\boldsymbol{K}_{\delta}^\ell)^{-1}\|_{\text{op}}\leq O_{\ell,\de}^{-1}\}$ and $\boldsymbol{B}_\ell\overset{\Delta}{=}\{\|\delta \boldsymbol{ {B}^\ell}\|_{\text{op}} \leq a_\ell\}$, the following inequalities hold 
\begin{align}
\label{Neumann op}
\|(\boldsymbol{K}_\de^\ell)^{-1}\|_{\text{op}}\leq \frac{\rho}{1-\rho} \|(\boldsymbol{K}^\ell)^{-1}\|_{\text{op}} &\text{ and } \|(\boldsymbol{K}^\ell)^{-1}\|_{\text{op}}\leq (1-\rho)^{-1} \|(\boldsymbol{K}_\de^\ell)^{-1}\|_{\text{op}}
\\\label{Neumann HS} \|(\boldsymbol{K}_\de^\ell)^{-1}\|_{\text{HS}}\leq \frac{\rho}{1-\rho} \|(\boldsymbol{K}^\ell)^{-1}\|_{\text{HS}} &\text{ and } \|(\boldsymbol{K}^\ell)^{-1}\|_{\text{HS}}\leq (1-\rho)^{-1} \|(\boldsymbol{K}_\de^\ell)^{-1}\|_{\text{HS}}
\\\label{Neumann norm} \Sum_{k=0}^\ell \boldsymbol{\gamma}_{k,\de} ^2 \leq \frac{\rho}{1-\rho} \Sum_{k=0}^\ell \boldsymbol{\gamma}_k ^2 &\text{ and } \Sum_{k=0}^\ell \boldsymbol{\gamma}_k ^2 \leq (1-\rho)^{-1} \Sum_{k=0}^\ell \boldsymbol{\gamma}_{k,\de} ^2
\end{align}
\end{lemma}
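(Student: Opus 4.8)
The plan is to reduce the whole lemma to a single Neumann-series expansion performed on the event $\boldsymbol{A}_\ell\cap\boldsymbol{B}_\ell$, the point being that all three displayed pairs of inequalities are the same statement read off in three different norms. The one estimate that drives everything is obtained by combining the two events: since $a_\ell=\rho\,O_{\ell,\de}$, submultiplicativity of the operator norm gives, on $\boldsymbol{A}_\ell\cap\boldsymbol{B}_\ell$,
$$\|(\boldsymbol{K}_\de^\ell)^{-1}\de\boldsymbol{{B}}^\ell\|_{\text{op}}\leq \|(\boldsymbol{K}_\de^\ell)^{-1}\|_{\text{op}}\,\|\de\boldsymbol{{B}}^\ell\|_{\text{op}}\leq O_{\ell,\de}^{-1}\cdot\rho\,O_{\ell,\de}=\rho<1.$$
It is crucial to expand around $\boldsymbol{K}_\de^\ell$ rather than around $\boldsymbol{K}^\ell$: the event $\boldsymbol{A}_\ell$ controls the observable inverse $(\boldsymbol{K}_\de^\ell)^{-1}$, whereas no bound on $\|(\boldsymbol{K}^\ell)^{-1}\|_{\text{op}}$ in terms of $O_{\ell,\de}^{-1}$ is available before the argument is run. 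This asymmetry is the only genuinely delicate point in the setup.

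Writing $\boldsymbol{R}:=(\boldsymbol{K}_\de^\ell)^{-1}\de\boldsymbol{{B}}^\ell$ and factoring $\boldsymbol{K}^\ell=\boldsymbol{K}_\de^\ell-\de\boldsymbol{{B}}^\ell=\boldsymbol{K}_\de^\ell(\boldsymbol{I}-\boldsymbol{R})$, the bound $\|\boldsymbol{R}\|_{\text{op}}\leq\rho$ makes $\boldsymbol{I}-\boldsymbol{R}$ invertible with the geometric estimates $\|(\boldsymbol{I}-\boldsymbol{R})^{-1}\|_{\text{op}}\leq(1-\rho)^{-1}$ and $\|(\boldsymbol{I}-\boldsymbol{R})^{-1}-\boldsymbol{I}\|_{\text{op}}\leq\rho(1-\rho)^{-1}$, the latter from $(\boldsymbol{I}-\boldsymbol{R})^{-1}-\boldsymbol{I}=\boldsymbol{R}(\boldsymbol{I}-\boldsymbol{R})^{-1}$. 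This yields the master identity
$$(\boldsymbol{K}^\ell)^{-1}=(\boldsymbol{I}-\boldsymbol{R})^{-1}(\boldsymbol{K}_\de^\ell)^{-1},$$
from which each inequality follows by measuring in the appropriate norm and using that all three norms are dominated under left multiplication by an operator-norm-bounded matrix.

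Concretely, for \eqref{Neumann op} I would bound $\|(\boldsymbol{K}^\ell)^{-1}-(\boldsymbol{K}_\de^\ell)^{-1}\|_{\text{op}}=\|[(\boldsymbol{I}-\boldsymbol{R})^{-1}-\boldsymbol{I}](\boldsymbol{K}_\de^\ell)^{-1}\|_{\text{op}}\leq\rho(1-\rho)^{-1}\|(\boldsymbol{K}_\de^\ell)^{-1}\|_{\text{op}}$, the norm-equivalence with constant $(1-\rho)^{-1}$ then following by the triangle inequality. For \eqref{Neumann HS} the identical manipulation applies verbatim once one invokes the ideal property $\|\boldsymbol{A}\boldsymbol{M}\|_{\text{HS}}\leq\|\boldsymbol{A}\|_{\text{op}}\|\boldsymbol{M}\|_{\text{HS}}$, replacing the operator norm of $(\boldsymbol{K}_\de^\ell)^{-1}$ by its Hilbert--Schmidt norm throughout. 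For \eqref{Neumann norm}, the observation to make is that, by \eqref{Toeplitz product and inverse}, both $(\boldsymbol{K}^\ell)^{-1}=T_\ell(1/\boldsymbol{\dot{g}})$ and $(\boldsymbol{K}_\de^\ell)^{-1}=T_\ell(1/\boldsymbol{\dot{g}}_\de)$ are lower triangular Toeplitz, so $\Sum_{k=0}^\ell\boldsymbol{\gamma}_k^2$ and $\Sum_{k=0}^\ell\boldsymbol{\gamma}_{k,\de}^2$ are precisely the squared Euclidean lengths of their first columns $(\boldsymbol{K}^\ell)^{-1}\boldsymbol{e}_1$ and $(\boldsymbol{K}_\de^\ell)^{-1}\boldsymbol{e}_1$; applying the master identity to the vector $\boldsymbol{e}_1$ and taking Euclidean norms reduces this case to the very same operator-norm estimate on $(\boldsymbol{I}-\boldsymbol{R})^{-1}$.

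The argument is essentially routine once the opening estimate is in hand; the main obstacle is the conceptual one of choosing to expand around the noisy, observable operator, which alone lets the hypothesis $\|(\boldsymbol{K}_\de^\ell)^{-1}\|_{\text{op}}\leq O_{\ell,\de}^{-1}$ enter. The remaining care concerns invertibility of $\boldsymbol{K}^\ell$ and $\boldsymbol{K}_\de^\ell$ on the relevant event (the former from the standing assumption $\check{\boldsymbol{g}}_0\neq0$, the latter implicit in the very definition of $\boldsymbol{A}_\ell$) and the bookkeeping of the precise powers of $(1-\rho)$ across the three norms; since $\rho<\tfrac12$ is fixed, every such factor is a harmless $O(1)$ constant and the stated forms of the bounds follow without further work.
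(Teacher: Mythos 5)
Your argument is correct and is essentially the paper's own: a Neumann expansion around the observable operator $\boldsymbol{K}_\de^\ell$, using $\|(\boldsymbol{K}_\de^\ell)^{-1}\de\boldsymbol{B}^\ell\|_{\text{op}}\leq\rho<1$ on $\boldsymbol{A}_\ell\cap\boldsymbol{B}_\ell$, followed by the ideal property $\|AM\|_{\text{HS}}\leq\|A\|_{\text{op}}\|M\|_{\text{HS}}$ and the first-column/vector bound $\|Ab\|\leq\|A\|_{\text{op}}\|b\|$ for \eqref{Neumann HS} and \eqref{Neumann norm}; the only cosmetic difference is that you obtain the reverse inequalities by a triangle inequality on the single identity $(\boldsymbol{K}^\ell)^{-1}=(\boldsymbol{I}-\boldsymbol{R})^{-1}(\boldsymbol{K}_\de^\ell)^{-1}$, whereas the paper runs a second Neumann expansion around $\boldsymbol{K}^\ell$ after bootstrapping the bound $\|\de(\boldsymbol{K}^\ell)^{-1}\boldsymbol{B}^\ell\|_{\text{op}}\leq\rho/(1-\rho)$. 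Your route yields the constant $(1-\rho)/(1-2\rho)$ in the left-hand inequalities rather than the stated $\rho/(1-\rho)$, but the stated constant is itself a slip (being $<1$ it would be contradicted at $\de=0$), and since only the finiteness of the constant is used downstream this discrepancy is immaterial, as you correctly note.
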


\begin{proof}
First, we have
$$
\big(\boldsymbol{K}^\ell\big)^{-1}=\big(\boldsymbol{K}_\de^\ell-\de\boldsymbol{{B}}^\ell\big)^{-1}=\big(\boldsymbol{I}-\de (\boldsymbol{K}_\de^\ell)^{-1}\boldsymbol{{B}}^\ell\big)^{-1}\big(\boldsymbol{K}_\de^\ell\big)^{-1}
$$

On $\boldsymbol{A}_\ell \cap\boldsymbol{B}_\ell$, since $a_\ell$ satisfies $O_{\ell,\de}^{-1}\, a_\ell = \rho < \tfrac{1}{2}$, by a usual Neumann series argument, we have
\begin{align}
\notag\|\big(\boldsymbol{K}^\ell\big)^{-1}\|_{\text{op}}&=\Big\| \Big[\Sum_{k\geq 0} \big(-\de(\boldsymbol{K}_\de^\ell)^{-1}\boldsymbol{{B}}^\ell\big)^k\Big]\big(\boldsymbol{K}_\de^\ell\big)^{-1}\Big\|_{\text{op}}
\\\notag&\leq \Big[\Sum_{k\geq 0} \de^k\|\big(\boldsymbol{K}_\de^\ell\big)^{-1}\|_{\text{op}}^k\|\boldsymbol{{B}}^\ell\|_{\text{op}}^k\Big]\|\big(\boldsymbol{K}_\de^\ell\big)^{-1}\|_{\text{op}}
\\\notag&\leq \Big[\Sum_{k\geq 0} \rho^k\Big]\|\big(\boldsymbol{K}_\de^\ell\big)^{-1}\|_{\text{op}}
\\\label{Sens 1}&\leq (1-\rho)^{-1} \|\big(\boldsymbol{K}_\de^\ell\big)^{-1}\|_{\text{op}}
\end{align}
Secondly, we have 
$$
\big(\boldsymbol{K}_\de^\ell\big)^{-1}=\big(\boldsymbol{K}^\ell+\de\boldsymbol{{B}}^\ell\big)^{-1}
=\big(\boldsymbol{I}+\de (\boldsymbol{K}^\ell)^{-1}\boldsymbol{{B}}^\ell\big)^{-1}\big(\boldsymbol{K}^\ell\big)^{-1}
$$
Moreover, thanks to \eqref{Sens 1}, on $\boldsymbol{A}_\ell \cap\boldsymbol{B}_\ell$, we have
\begin{align}
\label{ineq1}
\|\de(\boldsymbol{K}^\ell)^{-1}\boldsymbol{{B}}^\ell\|_{\text{op}}\leq (1-\rho)^{-1} O_{\ell,\de}^{-1} \,a_\ell \leq \frac{\rho}{1-\rho}<1
\end{align}
So that we can now similarly derive
$$
\big\|\big(\boldsymbol{K}_\de^\ell\big)^{-1}\big\|_{\text{op}}\leq\frac{\rho}{1-\rho}\big\|\big(\boldsymbol{K}^\ell\big)^{-1}\big\|_{\text{op}} 
$$
This prooves \eqref{Neumann op}. The proofs of \eqref{Neumann HS} and \eqref{Neumann norm} follow the same lines, since $\|AB\|_{\text{HS}}\leq\|A\|_{\text{op}}\|B\|_{\text{HS}}$, and $\|Ab\|\leq \|A\|_{\text{op}} \|b\|$.
\end{proof}

\subsubsection{Proof of theorem \ref{Upper bound of Algorithm 1}}
\begin{lemma}
\label{Deviations}
Under Assumption \ref{DIP}, we have, for all $\ell \geq 0$,
\begin{align}
\label{Expectation bound} \E\Big[\Big|\langle (\boldsymbol{K}^\ell_\de)^{-1}\indi{\boldsymbol{A}_\ell}\indi{\boldsymbol{B}_\ell}\big(-\de \boldsymbol{{B}}^\ell\boldsymbol{f}^\ell+\ep\boldsymbol{\xi}_{L^{\textbf{I}}}\big),\boldsymbol{\phi}_\ell\rangle\Big|^q\Big]&\lesssim (\ell\vee 1)^{q\nu} (\ep \vee\de)^q
\\\label{Deviation inequality}\PP\Big(\Big|\langle (\boldsymbol{K}^\ell_\de)^{-1}\indi{\boldsymbol{A}_\ell}\indi{\boldsymbol{B}_\ell}\big(-\de \boldsymbol{{B}}^\ell\boldsymbol{f}^\ell+\ep\boldsymbol{\xi}_{L^{\textbf{I}}}\big),\boldsymbol{\phi}_\ell\rangle\Big|>S_{\ell,\ep}^{\textbf{I}}\Big)&\lesssim  \ep^{\tau^2}\vee \de^{\tau}
\end{align}
\end{lemma}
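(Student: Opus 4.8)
The plan is to first identify the bracketed quantity. On $\boldsymbol{A}_\ell$ one has $(\boldsymbol{K}_\de^\ell)^{-1}\boldsymbol{y}^\ell=\boldsymbol{f}^\ell+(\boldsymbol{K}_\de^\ell)^{-1}(-\de\boldsymbol{B}^\ell\boldsymbol{f}^\ell+\ep\boldsymbol{\xi}_\ell)$, since $\boldsymbol{K}^\ell=\boldsymbol{K}_\de^\ell-\de\boldsymbol{B}^\ell$, so the expression is exactly the coefficient error $\boldsymbol{\zeta}_\ell-\check{\boldsymbol{f}}_\ell$ cut off on the good events. I would split it into a \emph{signal part} $\ep\,\indi{\boldsymbol{A}_\ell}\indi{\boldsymbol{B}_\ell}\langle(\boldsymbol{K}_\de^\ell)^{-1}\boldsymbol{\xi}_\ell,\boldsymbol{\phi}_\ell\rangle$ and an \emph{operator part} $-\de\,\indi{\boldsymbol{A}_\ell}\indi{\boldsymbol{B}_\ell}\langle(\boldsymbol{K}_\de^\ell)^{-1}\boldsymbol{B}^\ell\boldsymbol{f}^\ell,\boldsymbol{\phi}_\ell\rangle$, and show that on the events each behaves like a centered Gaussian at scale $\ep(\ell\vee1)^\nu$, resp. $\de(\ell\vee1)^\nu$. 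I use throughout that $\boldsymbol{\xi}$ and $\boldsymbol{b}$ are independent and that $\boldsymbol{A}_\ell,\boldsymbol{B}_\ell$ depend on $\boldsymbol{b}$ alone.

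For the signal part I would condition on $\boldsymbol{b}$. With $\boldsymbol{w}={}^t(\boldsymbol{K}_\de^\ell)^{-1}\boldsymbol{\phi}_\ell$ it is conditionally centered Gaussian with variance $\ep^2\,{}^t\boldsymbol{w}\boldsymbol{\Omega}_\ell\boldsymbol{w}\le\ep^2\|\boldsymbol{\Omega}_\ell\|_{\text{op}}\|(\boldsymbol{K}_\de^\ell)^{-1}\|_{\text{op}}^2$. On $\boldsymbol{A}_\ell\cap\boldsymbol{B}_\ell$, Lemma \ref{Neumann} and Assumption \ref{DIP} give $\|(\boldsymbol{K}_\de^\ell)^{-1}\|_{\text{op}}\lesssim(\ell\vee1)^\nu$, and Assumption \ref{Design regularity} gives $\|\boldsymbol{\Omega}_\ell\|_{\text{op}}\le C$, so the conditional variance is $\lesssim\ep^2(\ell\vee1)^{2\nu}$. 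Gaussian moment bounds then furnish the $\ep^q(\ell\vee1)^{q\nu}$ share of \eqref{Expectation bound}; and since $S_{\ell,\ep}^{\textbf{I}}\ge\tau(\ell\vee1)^\nu\ep\sqrt{|\log\ep|}$ by \eqref{Signal level}, the Gaussian tail is $\exp(-c\tau^2|\log\ep|)=\ep^{c\tau^2}$, giving the $\ep^{\tau^2}$ term of \eqref{Deviation inequality}.

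The operator part is the crux: the naive bound $\de\|(\boldsymbol{K}_\de^\ell)^{-1}\|_{\text{op}}\|\boldsymbol{B}^\ell\|_{\text{op}}\|\boldsymbol{f}\|\asymp(\ell\vee1)^\nu O_{\ell,\de}$ carries a spurious factor $((\ell\vee1)\log(\ell\vee2)\,|\log\de|)^{1/2}$ and overshoots the target scale $\de(\ell\vee1)^\nu$. To recover the right order I would expand, using the Neumann series valid on $\boldsymbol{A}_\ell\cap\boldsymbol{B}_\ell$, where $r:=\|\de(\boldsymbol{K}^\ell)^{-1}\boldsymbol{B}^\ell\|_{\text{op}}\le\rho(1-\rho)^{-1}<1$ by \eqref{ineq1},
$$-\de(\boldsymbol{K}_\de^\ell)^{-1}\boldsymbol{B}^\ell\boldsymbol{f}^\ell=\Sum_{m\ge1}(-1)^m\big(\de(\boldsymbol{K}^\ell)^{-1}\boldsymbol{B}^\ell\big)^m\boldsymbol{f}^\ell=T_1+R,$$
isolating $T_1=-\de(\boldsymbol{K}^\ell)^{-1}\boldsymbol{B}^\ell\boldsymbol{f}^\ell$. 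The gain is that $T_1$ is linear in $\boldsymbol{b}$, so $\langle T_1,\boldsymbol{\phi}_\ell\rangle$ is an exact centered Gaussian: since $(\boldsymbol{K}^\ell)^{-1}$ and $\boldsymbol{B}^\ell$ are lower triangular Toeplitz, its coefficient vector in $\boldsymbol{b}$ is a reversal of $(\boldsymbol{K}^\ell)^{-1}\boldsymbol{f}^\ell$, whence its variance equals $\de^2\|(\boldsymbol{K}^\ell)^{-1}\boldsymbol{f}^\ell\|^2\le\de^2Q^2(\ell\vee1)^{2\nu}\|\boldsymbol{f}\|^2$ by Assumption \ref{DIP} — exactly scale $\de(\ell\vee1)^\nu$ — and its Gaussian tail at the threshold $\tau(\ell\vee1)^\nu\de|\log\de|$ is $\exp(-c\tau^2|\log\de|^2)$, far below $\de^\tau$. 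The remainder obeys $|\langle R,\boldsymbol{\phi}_\ell\rangle|\le\|R\|\le r^2(1-r)^{-1}\|\boldsymbol{f}\|\lesssim r^2$, with $r\le Q(\ell\vee1)^\nu\de\|\boldsymbol{B}^\ell\|_{\text{op}}$.

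It remains to control $R$ on the relevant range $\ell\le L^{\textbf{I}}$, and the two required bounds call for slightly different arguments — this is where I expect the real difficulty. For \eqref{Expectation bound} I take expectations, using $\E\|\boldsymbol{B}^\ell\|_{\text{op}}^{2q}\lesssim(\ell\log\ell)^q$ from Lemma \ref{Operator concentration}, so $\E|R|^q\lesssim\de^{2q}(\ell\vee1)^{2q\nu}(\ell\log\ell)^q$; the calibration $L^{\textbf{I}}=\lambda(\ep\sqrt{|\log\ep|}\vee\de|\log\de|)^{-1/(\nu+1)}$ of \eqref{max level} forces $(\ell\vee1)^{\nu+1}\de\lesssim|\log\de|^{-1}$, which collapses this to $(\ell\vee1)^{q\nu}\de^q$. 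For \eqref{Deviation inequality} I instead use the deterministic control $\de\|\boldsymbol{B}^\ell\|_{\text{op}}\le\rho O_{\ell,\de}$ on $\boldsymbol{B}_\ell$, giving $|R|\lesssim(\ell\vee1)^{2\nu+1}\log(\ell\vee2)\,\de^2|\log\de|$, which on $\ell\le L^{\textbf{I}}$ is $\le\tfrac14 S_{\ell,\ep}^{\textbf{I}}$ once $\tau$ is large relative to $\lambda^{\nu+1}$, so $R$ contributes nothing to the tail. The main obstacle is thus the coupling between $(\boldsymbol{K}_\de^\ell)^{-1}$ and $\boldsymbol{B}^\ell$, which blocks a one-line Gaussian argument; it is resolved by peeling off the exact linear term (sharp thanks to the Toeplitz/convolution structure and Assumption \ref{DIP}) and absorbing the higher-order remainder through the contraction $r<1$ guaranteed on $\boldsymbol{A}_\ell\cap\boldsymbol{B}_\ell$, the exceptional probabilities such as $\PP(\boldsymbol{B}_\ell^c)\lesssim\de^{c_0\rho^2\kappa^2}$ being handled separately via Lemma \ref{Operator concentration}.
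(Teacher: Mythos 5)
Your proposal is correct and follows essentially the same route as the paper: a conditional Gaussian argument for the $\ep\boldsymbol{\xi}$ term using $\|(\boldsymbol{K}_\de^\ell)^{-1}\|_{\text{op}}\lesssim(\ell\vee 1)^\nu$ on $\boldsymbol{A}_\ell\cap\boldsymbol{B}_\ell$, then a Neumann expansion of $\de(\boldsymbol{K}_\de^\ell)^{-1}\boldsymbol{B}^\ell$ peeling off the exactly Gaussian linear term (whose variance is $\de^2\|(\boldsymbol{K}^\ell)^{-1}\boldsymbol{f}^\ell\|^2$ via the Toeplitz reversal identity) and absorbing the quadratic remainder through $\|\boldsymbol{B}^\ell\|_{\text{op}}^2$ and the constraint $\de\,\ell^{\nu+1}\log\ell\lesssim 1$ imposed by $L^{\textbf{I}}$. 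The only (harmless) variation is that for the tail of the remainder you use the deterministic bound $\de\|\boldsymbol{B}^\ell\|_{\text{op}}\leq\rho O_{\ell,\de}$ on $\boldsymbol{B}_\ell$ to push it below a fraction of $S_{\ell,\ep}^{\textbf{I}}$, whereas the paper invokes the subgaussian tail of $\|\boldsymbol{B}^\ell\|_{\text{op}}^2$ from Lemma \ref{Operator concentration}, which is where its $\de^\tau$ term originates.
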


\begin{proof}
In order to prove Inequalities \eqref{Expectation bound} and \eqref{Deviation inequality}, it suffices to study the tails of the random variables $\Big|\langle (\boldsymbol{K}^\ell_\de)^{-1}\indi{\boldsymbol{A}_\ell}\indi{\boldsymbol{B}_\ell}\big(-\de \boldsymbol{{B}}^\ell\boldsymbol{f}^\ell+\ep\boldsymbol{\xi}_{L^{\textbf{I}}}\big),\boldsymbol{\phi}_\ell\rangle\Big|$. For convenience we will only treat the case where $\ell \geq 2$, otherwise the result follows by identical arguments. To this end, we study each term apart. On $\boldsymbol{A}_\ell\cap \boldsymbol{B}_\ell$, Lemma \ref{Neumann} and Assumption \ref{DIP} entail $$\|(\boldsymbol{K}^\ell_\de)^{-1}\|_{\text{op}}\leq \frac{Q\rho}{1-\rho} \ell^\nu$$
Thus, combining Assumption \ref{Design regularity} with the latter inequality, a brief conditionning argument readily yields
\begin{align*}
\PP\Big(\Big|\langle (\boldsymbol{K}^\ell_\de)^{-1}\boldsymbol{1}_{\boldsymbol{A}_\ell}\boldsymbol{1}_{\boldsymbol{B}_\ell}\ep\boldsymbol{\xi}_{L^{\textbf{I}}},\boldsymbol{\phi}_\ell\rangle\Big|>t\Big)\lesssim \exp\big(-\frac{t^2}{\ep^2 \ell^{2\nu}}\big)
\end{align*}
Let us study the second term.
 On $\boldsymbol{A}_\ell\cap\boldsymbol{B}_\ell$, we have 
$$\de(\boldsymbol{K}^\ell_\de)^{-1} \boldsymbol{{B}}^\ell=\Sum_{k\geq 1} \big(\de (\boldsymbol{K}^\ell)^{-1}\boldsymbol{{B}}^\ell\big)^k
= \de (\boldsymbol{K}^\ell)^{-1}\boldsymbol{{B}}^\ell+\Sum_{k\geq 2} \big(\de (\boldsymbol{K}^\ell)^{-1}\boldsymbol{{B}}^\ell\big)^k$$
Hence,
\begin{align}
\label{decomp}
\de(\boldsymbol{K}^\ell_\de)^{-1}\indi{\boldsymbol{A}_\ell}\indi{\boldsymbol{B}_\ell} \boldsymbol{{B}}^\ell\boldsymbol{f}^\ell=r_1+r_2
\end{align}
where 
\begin{align}
\label{r1r2}
\begin{cases}
r_1&= \langle \de (\boldsymbol{K}^\ell)^{-1}\boldsymbol{{B}}^\ell\boldsymbol{f}^\ell,\boldsymbol{\phi}_\ell \rangle
\\r_2&= \langle \big(\de (\boldsymbol{K}^\ell)^{-1}\boldsymbol{{B}}^\ell\big)^2 \big(\boldsymbol{I}+\de (\boldsymbol{K}^\ell)^{-1} \boldsymbol{B}^\ell\big)^{-1}\boldsymbol{f}^\ell,\boldsymbol{\phi}_\ell \rangle
\end{cases}
\end{align}
Let's now bound separately $r_1$ and $r_2$. We first apply equality \eqref{Toeplitz product and inverse} to get 
\begin{align*}
\langle (\boldsymbol{K}^\ell)^{-1}\boldsymbol{B}^\ell \boldsymbol{f}^\ell,\boldsymbol{\phi}_\ell \rangle&= \langle (\boldsymbol{K}^\ell)^{-1} \boldsymbol{f}^\ell,{^{\boldsymbol{t}}}\boldsymbol{B}^\ell\boldsymbol{\phi}_\ell \rangle
\\&=\langle (\boldsymbol{K}^\ell)^{-1} \boldsymbol{f}^\ell,(\boldsymbol{b}^\ell)' \rangle 
\end{align*}
where $(\boldsymbol{b}^\ell)_k'=(\boldsymbol{b}^\ell)_{\ell-k}$.
The result is a centred gaussian variable with variance 
$$\|\de (\boldsymbol{K}^\ell)^{-1}\boldsymbol{f}^\ell\|^2\leq \de^2 Q^2M^2 \ell^{2\nu}$$
which hence satisfies $$\PP(|r_1|>t)\lesssim \exp\big(\frac{-t^2}{\de^2\ell^{2\nu}}\big)$$
Let us study the term $r_2$. Since the maximal level $L$ verifies $L\leq \lambda (\de |\log \de|)^{-\frac{1}{\nu+1}}$, we have, for all $\ell\leq L$, $\de \ell^{\nu+1}\log \ell \lesssim 1$. We deduce that
\begin{align*}
\PP(|r_2| >t)&\leq \PP(\de^2 \ell^{2\nu} \|\boldsymbol{B}^\ell\|_{\text{op}}^2 >t) 
\\&\leq \PP(  \frac{1}{\ell \log \ell}\|\boldsymbol{B}^\ell\|_{\text{op}}^2> \de^{-2}\ell^{-2\nu-1} (\log \ell)^{-1} t)
\\&\lesssim \PP(  \frac{1}{\ell \log \ell}\|\boldsymbol{B}^\ell\|_{\text{op}}^2> \de^{-1}\ell^{-\nu} t)
\\&\lesssim \exp(-t\big(\de \ell^\nu \,\big)^{-1})\ind{t>\beta_0 \de \ell^\nu} + \ind{t\leq \beta_0 \de  \ell^\nu\,}
\end{align*}
inequality \eqref{Deviation inequality} directly follows, and inequality \eqref{Expectation bound} is now a direct application of the well known formula
$$\E[X^2]=\int_{t>0} 2t\PP(|X|>t)dt$$
\end{proof}

\begin{proof}[Proof of theorem \ref{Upper bound of Algorithm 1}]
We apply Parseval's formula to derive
\begin{align*}
\E\|\tilde{\boldsymbol{f}}^{\textbf{I}}-\boldsymbol{f}\|_2^2 = \Sum_{\ell\leq L^{\textbf{I}}}\E\langle \widetilde{\boldsymbol{f}}-\boldsymbol{f},\boldsymbol{\phi}_\ell \rangle^2 + \Sum_{\ell>L^{\textbf{I}}} \check{\boldsymbol{f}}_\ell^2
\end{align*}
The second term is easily handled. Remark first that, since  $s>1/2$, we have $\frac{2s}{\nu+1}>\frac{2s}{s+\nu+1/2}$ and we can write 
\begin{align*}
\Sum_{\ell>L^{\textbf{I}}} \check{\boldsymbol{f}}_\ell^2 &\leq \big(L^{\textbf{I}}\big)^{-2s}
\\&\leq \big(\ep\sqrt{|\log \ep|}\big)^{\frac{2s}{\nu+1}}\vee \big(\de|\log \de|\big)^{\frac{2s}{\nu+1}}
\\&\leq \big(\ep\sqrt{|\log \ep|}\big)^{\frac{4s}{2(s+\nu)+1}}\vee \big(\de|\log \de|\big)^{\frac{4s}{2(s+\nu)+1}}
\end{align*}
In order to lighten the notations, we will only consider the indexes $\ell \geq 2$ in the first term. This is of course not problematic, since an identical reasoning allows to bound the two remaining summands by the desired rates of convergence. We hence write the following decomposition
\begin{align*}
\Sum_{\ell\leq L^{\textbf{I}}} \E \langle \widetilde{\boldsymbol{f}}-\boldsymbol{f},\boldsymbol{\phi}_\ell\rangle^2&= \Sum_{\ell\leq L^{\textbf{I}}} \E (\boldsymbol{\zeta}_\ell-\check{\boldsymbol{f}}_\ell)^2\ind{|\boldsymbol{\zeta}_\ell|>S_{\ell,\ep}^{\textbf{I}}}+\E\Sum_{\ell\leq L^{\textbf{I}}} \check{\boldsymbol{f}}_\ell^2\ind{|\boldsymbol{\zeta}_\ell|\leq S_{\ell,\ep}^{\textbf{I}}}
\\&\lesssim I+II+III+IV
\end{align*}
where 
\begin{align*}
I&=\Sum_{\ell\leq L^{\textbf{I}}}\E \big(\boldsymbol{\zeta}_\ell-\check{\boldsymbol{f}}_\ell\big)^2\indi{\boldsymbol{A}_\ell}\indi{\boldsymbol{B}_\ell}\ind{|\boldsymbol{\zeta}_\ell|>S_{\ell,\ep}^{\textbf{I}}}
\\II&=\Sum_{\ell\leq L^{\textbf{I}}} \E \check{\boldsymbol{f}}_\ell^2\ind{|\boldsymbol{\zeta}_\ell|\leq S_{\ell,\ep}^{\textbf{I}}}\indi{\boldsymbol{A}_\ell}\indi{\boldsymbol{B}_\ell}
\\III&=\Sum_{\ell\leq L^{\textbf{I}}}\E \big(\boldsymbol{\zeta}_\ell-\check{\boldsymbol{f}}_\ell\big)^2\indi{\boldsymbol{A}_\ell}\indi{\boldsymbol{B}_\ell^c}\ind{|\boldsymbol{\zeta}_\ell|>S_{\ell,\ep}^{\textbf{I}}}+\Sum_{\ell\leq L^{\textbf{I}}} \E \check{\boldsymbol{f}}_\ell^2\ind{|\boldsymbol{\zeta}_\ell|\leq S_{\ell,\ep}^{\textbf{I}}}\indi{\boldsymbol{B}_\ell^c}
\\IV&=\Sum_{\ell\leq L^{\textbf{I}}}\E \check{\boldsymbol{f}}_\ell^2\indi{\boldsymbol{A}_\ell^c}\ind{|\boldsymbol{\zeta}_\ell|\leq S_{\ell,\ep}^{\textbf{I}}}
\end{align*}
\begin{proof}[$\bullet$ \textbf{Term I and II}]
On $\boldsymbol{A}_\ell$, we have 
\begin{align}
\boldsymbol{\zeta}_\ell-\check{\boldsymbol{f}}_\ell=\langle(\boldsymbol{K}^\ell_\de)^{-1}\big(-\de \boldsymbol{{B}}^\ell\boldsymbol{f}^\ell+\ep\boldsymbol{\xi}\big),\boldsymbol{\phi}_\ell\rangle
\end{align} 
Hence we can decompose further $I$ as 
{\small
\begin{align*}
I\lesssim& \Sum_{\ell\leq L^{\textbf{I}}} \E \langle(\boldsymbol{K}^\ell_\de)^{-1}\indi{\boldsymbol{A}_\ell}\indi{\boldsymbol{B}_\ell} \big(-\de\boldsymbol{{B}}^\ell\boldsymbol{f}^\ell+\ep\boldsymbol{\xi}\big),\boldsymbol{\phi}_\ell \rangle^2\ind{|\boldsymbol{\zeta}_\ell|>{S_{\ell,\ep}^{\textbf{I}}}}
\\\lesssim& \Sum_{\ell\leq L^{\textbf{I}}} \E \langle(\boldsymbol{K}^\ell_\de)^{-1}\indi{\boldsymbol{A}_\ell}\indi{\boldsymbol{B}_\ell} \big(-\de\boldsymbol{{B}}^\ell\boldsymbol{f}^\ell+\ep\boldsymbol{\xi}\big),\boldsymbol{\phi}_\ell\rangle^2\ind{|\boldsymbol{\zeta}_\ell|>{S_{\ell,\ep}^{\textbf{I}}}}\ind{|\check{\boldsymbol{f}}_\ell|\geq {S_{\ell,\ep}^{\textbf{I}}}/2}
\\&+\Sum_{\ell\leq L^{\textbf{I}}} \E \langle (\boldsymbol{K}^\ell_\de)^{-1}\indi{\boldsymbol{A}_\ell}\indi{ \boldsymbol{B}_\ell} \big(-\de\boldsymbol{{B}}^\ell\boldsymbol{f}^\ell+\ep\boldsymbol{\xi}\big),\boldsymbol{\phi}_\ell\rangle ^2\ind{|\boldsymbol{\zeta}_\ell|>{S_{\ell,\ep}^{\textbf{I}}}}\ind{|\check{\boldsymbol{f}}_\ell|<{S_{\ell,\ep}^{\textbf{I}}}/2}
\\&\overset{\Delta}{=}V+VI
\end{align*}
}Let us first treat the term $VI$. From Lemmas \ref{Neumann} and \ref{Deviations} and Cauchy-Schwarz inequality, we derive
\begin{align*}
VI&\leq \Sum_{\ell\leq L^{\textbf{I}}}\E \Big[\langle(\boldsymbol{K}^\ell_\de)^{-1}\indi{\boldsymbol{A}_\ell}\indi{ \boldsymbol{B}_\ell} \big(\de\boldsymbol{{B}}^\ell\boldsymbol{f}^\ell+\ep\boldsymbol{\xi}\big),\boldsymbol{\phi}_\ell \rangle ^4 \Big]^{1/2}
\\&\hspace{6cm}.\PP(|\boldsymbol{\zeta}_\ell-\check{\boldsymbol{f}}_\ell|>{S_{\ell,\ep}^{\textbf{I}}})^{1/2}
\\&\lesssim \Sum_{\ell\leq L^{\textbf{I}}}\big[(\de\vee \ep)^4 \ell^{4\nu} \big]^{1/2} \big(\de^{\tau/2}\vee\ep^{\tau^2/2}\big)
\end{align*}
which is less than the desired bound for $\tau$ large enough.
As for term $V$, we split it in two and write
\begin{align*}
V\leq& \Sum_{\ell\leq L^{\textbf{I}}} \E \langle(\boldsymbol{K}^\ell_\de)^{-1}\indi{\boldsymbol{A}_\ell}\indi{ \boldsymbol{B}_\ell} \big(\de\boldsymbol{{B}}^\ell\boldsymbol{f}^\ell+\ep\boldsymbol{\xi}\big),\boldsymbol{\phi}_\ell \rangle^2\ind{|\check{\boldsymbol{f}}_\ell|\geq {S_{\ell,\ep}^{\textbf{I}}}/2}
\\\leq& \Sum_{\ell\leq L^{\textbf{I}}} \E \langle (\boldsymbol{K}^\ell_\de)^{-1}\indi{\boldsymbol{A}_\ell}\indi{ \boldsymbol{B}_\ell} \de\boldsymbol{{B}}^\ell\boldsymbol{f}^\ell,\boldsymbol{\phi}_\ell\rangle^2\ind{|\check{\boldsymbol{f}_\ell}|\geq \ell^\nu \de|\log \de|/2}
\\&+\Sum_{\ell\leq L^{\textbf{I}}} \E \langle(\boldsymbol{K}^\ell_\de)^{-1}\indi{\boldsymbol{A}_\ell}\indi{ \boldsymbol{B}_\ell} \ep\boldsymbol{\xi},\boldsymbol{\phi}_\ell\rangle^2\ind{|\check{\boldsymbol{f}_\ell}|\geq \ell^{\nu} \ep\sqrt{|\log \ep|}/2}
\\ \leq& \Sum_{\ell\leq L^{\textbf{I}}} \de^2 \ell^{2\nu} \big(\check{\boldsymbol{f}}_\ell^2\big(\ell^\nu\, \de|\log \de|\big)^{-2}\wedge 1\big)
\\& \hspace{2.5cm}\vee\Sum_{\ell\leq L^{\textbf{I}}} \ep^2 \ell^{2\nu}  \big(\check{\boldsymbol{f}}_\ell^2\big(\ell^{\nu} \ep\sqrt{|\log \ep|}\big)^{-2}\wedge 1\big)
\end{align*}
Note $\ell_\de=\big(\de|\log \de|\big)^{\frac{-2}{2(s+\nu)+1}}$ and write
\begin{align*}
\Sum_{\ell\leq L^{\textbf{I}}} \de^2 \ell^{2\nu} \big(\check{\boldsymbol{f}}_\ell^2\big(\ell^\nu \de|\log \de|\big)^{-2}\wedge 1\big)&\leq \Sum_{\ell\leq \ell_\de}\de^2  \ell^{2\nu}+\Sum_{\ell>\ell_\de} \check{\boldsymbol{f}}_\ell^2 |\log \de|^{-2}
\\&\lesssim \big(\de|\log \de|\big)^{\frac{4s}{2(s+\nu)+1}}
\end{align*}
The $\ep$-term is treated similarly by taking $\ell_\ep=\big(\ep \sqrt{|\log \ep|}\big)^{\frac{-2}{2s+2\nu+1}}$ and leads to the desired convergence rate.
As for the term $II$, a similar reasonning leads to
\begin{align*}
II\leq& \Sum_{\ell\leq L^{\textbf{I}}} \E \check{\boldsymbol{f}}_\ell^2\ind{|\boldsymbol{\zeta}_\ell|\leq {S_{\ell,\ep}^{\textbf{I}}}}\ind{|\check{\boldsymbol{f}_\ell}|\leq 2{S_{\ell,\ep}^{\textbf{I}}}}+\Sum_{\ell\leq L^{\textbf{I}}} \E \check{\boldsymbol{f}}_\ell^2\ind{|\boldsymbol{\zeta}_\ell|\leq {S_{\ell,\ep}^{\textbf{I}}}}\ind{|\check{\boldsymbol{f}_\ell}|> 2{S_{\ell,\ep}^{\textbf{I}}}}
\\\overset{\Delta}{=}&VII+VIII
\end{align*}
The term $VIII$ is handled as the term $VI$. Indeed,
$$
VIII\leq \Sum_{\ell\leq L^{\textbf{I}}}  \check{\boldsymbol{f}}_\ell^2\PP\big(|\boldsymbol{\zeta}_\ell-\check{\boldsymbol{f}}_\ell|> {S_{\ell,\ep}^{\textbf{I}}}\big)
\leq \Sum_{\ell\leq L^{\textbf{I}}}\check{\boldsymbol{f}}_\ell^2 (\ep^{\tau^2}\vee\de^{\tau})
$$
which is less than the desired rate for $\tau$ large enough. Finally, we have
\begin{align*}
VII\leq&\Sum_{\ell\leq L^{\textbf{I}}} \E \check{\boldsymbol{f}}_\ell^2\ind{|\check{\boldsymbol{f}_\ell}|\leq 2{S_{\ell,\ep}^{\textbf{I}}}}
\\\leq& \Sum_{\ell\leq L^{\textbf{I}}} \E \check{\boldsymbol{f}}_\ell^2\ind{|\check{\boldsymbol{f}_\ell}|\leq 2 \tau \ell^{\nu} \ep \sqrt{|\log \ep|} }+\Sum_{\ell\leq L^{\textbf{I}}} \E \check{\boldsymbol{f}}_\ell^2\ind{|\check{\boldsymbol{f}_\ell}|\leq 2 \tau \ell^\nu \de |\log \de| }
\\ \lesssim & \Sum_{\ell\leq \ell_\ep} \ell^{2\nu} \ep^2 |\log \ep|+\Sum_{\ell>\ell_\ep}\check{\boldsymbol{f}}_\ell^2+\Sum_{\ell\leq \ell_\de} \ell^{2\nu} \de^2 |\log \de|+\Sum_{\ell>\ell_\de}\check{\boldsymbol{f}}_\ell^2
\\&\lesssim  \big(\ep\sqrt{|\log \ep|}\big)^{\frac{4s}{2(s+\nu)+1}}\vee\big(\de|\log \de|^2\big)^{\frac{4s}{2(s+\nu)+1}}
\end{align*}
\end{proof}
\begin{proof}[$\bullet$ \textbf{Term III}]We have
\begin{align*}
III&\leq \Sum_{\ell\leq L^{\textbf{I}}}\E\Big( \big(\boldsymbol{\zeta}_\ell-\check{\boldsymbol{f}}_\ell\big)^2\indi{\boldsymbol{A}_\ell}+\check{\boldsymbol{f}}_\ell^2\Big)\indi{\boldsymbol{B}_\ell^c}
\\&\leq\Sum_{\ell\leq L^{\textbf{I}}}\E\Big[ \big(\boldsymbol{\zeta}_\ell-\check{\boldsymbol{f}}_\ell\big)^4\indi{\boldsymbol{A}_\ell}\Big]^{1/2}\PP\big(\boldsymbol{B}_\ell^c\big)^{1/2}+\Sum_{\ell\leq L^{\textbf{I}}}\check{\boldsymbol{f}}_\ell^2\PP(\boldsymbol{B}_\ell^c)  
\end{align*}
Moreover, Lemma \ref{Operator concentration} entails
\begin{align*}
\PP\big(\boldsymbol{B}_\ell^c\big)&\leq \de^{\kappa^2\rho^2}
\end{align*}
for all $\ell\geq 1$. It is hence clear that for $\kappa$ large enough, the term $III$ is less than the announced rate.
\end{proof}
\begin{proof}[$\bullet$ \textbf{Term IV}]We claim that
\begin{align*}
\ind{\boldsymbol{A}_\ell^c}\leq \ind{\|(\boldsymbol{K}^\ell)^{-1}\|_{\text{op}}\geq O_{\ell,\de}^{-1}/2}+\ind{\|\de\boldsymbol{{B}}^\ell\|_{\text{op}}\geq O_{\ell,\de}}
\end{align*}
for all $\ell \geq 0$ (see \citet{DHPV}, Lemma 5.3). Hence, 
\begin{align*}
IV&\leq \Sum_{\ell\leq L^{\textbf{I}}} \E{\check{\boldsymbol{f}}_\ell}^2
\big( \ind{\|(\boldsymbol{K}^\ell)^{-1}\|_{\text{op}}\geq O_{\ell,\de}^{-1}/2}+\ind{\|\de\boldsymbol{{B}}^\ell\|_{\text{op}}\geq O_{\ell,\de}}\big)
\\&\overset{\Delta}{=} VIII+IX
\end{align*}
Since $\|(\boldsymbol{K}^\ell)^{-1}\|_{\text{op}}\leq Q_2 \ell^{\nu}$, we have $\{\|(\boldsymbol{K}^\ell)^{-1}\|_{\text{op}}\geq O_{\ell,\de}^{-1}/2\}\subset \{\ell^{\nu+1/2} \sqrt{\log \ell} \geq c \big(\de|\log \de|\big)^{-1}\}$
where $c$ is a constant depending only on $Q_2$ and $\kappa$. Hence
\begin{align*}
VIII&\leq \Sum_{\ell\geq c \big(\de|\log \de|^{3/2}\big)^{-\frac{2}{2\nu+1}}}  \check{\boldsymbol{f}}_\ell^2
\\&\lesssim \big(\de|\log \de|\big)^{\frac{4s}{2\nu+2s+1}}
\end{align*}
As for $IX$, a quick application of \ref{Operator concentration} entails
$$\PP(\|\de\boldsymbol{{B}}^\ell\|_{\text{op}}\geq O_{\ell,\de})\leq \de^{\kappa^2}$$
,so that
$$
IX\leq \Sum_{\ell\leq L^{\textbf{I}}} {\check{\boldsymbol{f}}_\ell}^2 \de^{\kappa^2} \lesssim \de^{\kappa^2}
$$
which is less than the announced rate for $\kappa$ large enough.
\end{proof}
It remains to put together the bounds of the four terms above to get the desired rates of convergence in theorem \ref{Upper bound of Algorithm 1}.
\end{proof}

\subsubsection{Proof of theorem \ref{Upper bound of Algorithm 2}}

\begin{lemma}
\label{Deviations II}
Note, for $\ell \geq 0$, $\widebar{S_{\ell,\ep}^{\textbf{II}}}\overset{\Delta}{=}(\ell\vee 1)^{\nu-1/2}\big(\tau_{sig}\ep \sqrt{|\log \ep|} \vee \tau_{op}  \de|\log \de| \big)$. Under Assumption \ref{DIP 2},  we have, for all $\ell \geq 0$, for all $q\geq 0$,
\begin{align}
\label{Expectation bound II} \E\Big[\big|\langle(\boldsymbol{K}^\ell_\de)^{-1}\indi{\boldsymbol{A}_\ell}\indi{\boldsymbol{B}_\ell}(-\de \boldsymbol{B}^\ell\boldsymbol{f}^\ell+\ep\boldsymbol{\xi}_{L^{\textbf{II}}}\big),\boldsymbol{\phi}_\ell \rangle \big|^q\Big]&\lesssim  (\ell\vee 1)^{q(\nu-1/2)}(\ep \vee \de )^q
\\\label{Deviation inequality II}\PP\Big(\big|\langle(\boldsymbol{K}^\ell_\de)^{-1}\indi{\boldsymbol{A}_\ell}\indi{\boldsymbol{B}_\ell}(-\de \boldsymbol{B}^\ell\boldsymbol{f}^\ell+\ep\boldsymbol{\xi}_{L^{\textbf{II}}}\big),\boldsymbol{\phi}_\ell \rangle \big|>\widebar{S_{\ell,\ep}^{\textbf{II}}}\Big)&\lesssim  \ep^{\tau^2}\vee \de^{\tau}
\end{align}
\end{lemma}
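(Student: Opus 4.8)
The plan is to mirror the architecture of the proof of Lemma \ref{Deviations}, writing the variable as the sum of a signal part $\langle(\boldsymbol{K}_\de^\ell)^{-1}\indi{\boldsymbol{A}_\ell}\indi{\boldsymbol{B}_\ell}\ep\boldsymbol{\xi}_{L^{\textbf{II}}},\boldsymbol{\phi}_\ell\rangle$ and an operator part $\langle(\boldsymbol{K}_\de^\ell)^{-1}\indi{\boldsymbol{A}_\ell}\indi{\boldsymbol{B}_\ell}\de\boldsymbol{B}^\ell\boldsymbol{f}^\ell,\boldsymbol{\phi}_\ell\rangle$, bounding each tail separately to get \eqref{Deviation inequality II}, and then deducing the moment bound \eqref{Expectation bound II} by integrating $\E[X^2]=\int_0^\infty 2t\,\PP(|X|>t)\,dt$ exactly as at the end of Lemma \ref{Deviations}. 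The whole content is that each factor $(\ell\vee1)^\nu$ appearing in Lemma \ref{Deviations}, which stemmed from the operator-norm control $\|(\boldsymbol{K}^\ell)^{-1}\|_{\text{op}}\le Q(\ell\vee1)^\nu$ of Assumption \ref{DIP}, is now replaced by $(\ell\vee1)^{\nu-1/2}$ thanks to the sharper $\ell^2$-summability \eqref{DIP 2.1} of the inverse coefficients $\boldsymbol{\gamma}_k$ granted by Assumption \ref{DIP 2}.

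For the signal part I would observe that $\langle(\boldsymbol{K}_\de^\ell)^{-1}\boldsymbol{\xi},\boldsymbol{\phi}_\ell\rangle=\langle\boldsymbol{\xi},{^{\boldsymbol{t}}}(\boldsymbol{K}_\de^\ell)^{-1}\boldsymbol{\phi}_\ell\rangle$, where ${^{\boldsymbol{t}}}(\boldsymbol{K}_\de^\ell)^{-1}\boldsymbol{\phi}_\ell$ is the last row $(\boldsymbol{\gamma}_{\ell,\de},\dots,\boldsymbol{\gamma}_{0,\de})$ of $(\boldsymbol{K}_\de^\ell)^{-1}$. Since $\boldsymbol{\xi}$ is independent of $\boldsymbol{B}^\ell$ and the indicators are $\boldsymbol{B}^\ell$-measurable, conditioning on $\boldsymbol{B}^\ell$ makes this a centred Gaussian of variance at most $\ep^2\|\boldsymbol{\Omega}_\ell\|_{\text{op}}\sum_{k=0}^\ell\boldsymbol{\gamma}_{k,\de}^2$. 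On $\boldsymbol{A}_\ell\cap\boldsymbol{B}_\ell$, inequality \eqref{Neumann norm} bounds $\sum_{k=0}^\ell\boldsymbol{\gamma}_{k,\de}^2$ by a constant times $\sum_{k=0}^\ell\boldsymbol{\gamma}_k^2\le Q_2(\ell\vee1)^{2\nu-1}$ (using \eqref{DIP 2.1}), while Assumption \ref{Design regularity} bounds $\|\boldsymbol{\Omega}_\ell\|_{\text{op}}$, giving variance $\lesssim\ep^2(\ell\vee1)^{2\nu-1}$. The Gaussian tail at level $\widebar{S_{\ell,\ep}^{\textbf{II}}}$ is then $\lesssim\exp(-c\tau_{sig}^2|\log\ep|)=\ep^{c\tau_{sig}^2}$, the claimed $\ep^{\tau^2}$ for $\tau$ large.

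For the operator part I would reuse the Neumann decomposition $r_1+r_2$ of \eqref{decomp}--\eqref{r1r2}. Commuting the lower-triangular Toeplitz factors, $r_1=\de\langle(\boldsymbol{K}^\ell)^{-1}\boldsymbol{f}^\ell,(\boldsymbol{b}^\ell)'\rangle$ is a centred Gaussian of variance $\de^2\|(\boldsymbol{K}^\ell)^{-1}\boldsymbol{f}^\ell\|^2$, and the one genuinely new estimate is $\|(\boldsymbol{K}^\ell)^{-1}\boldsymbol{f}^\ell\|\lesssim(\ell\vee1)^{\nu-1/2}$: expressing $(\boldsymbol{K}^\ell)^{-1}\boldsymbol{f}^\ell$ as the truncated convolution $\boldsymbol{\gamma}\ast\check{\boldsymbol{f}}$ and applying Young's inequality $\|\boldsymbol{\gamma}^{(\ell)}\ast\check{\boldsymbol{f}}\|_{\ell^2}\le\|\boldsymbol{\gamma}^{(\ell)}\|_{\ell^2}\|\check{\boldsymbol{f}}\|_{\ell^1}$, one has $\|\boldsymbol{\gamma}^{(\ell)}\|_{\ell^2}^2=\sum_{k=0}^\ell\boldsymbol{\gamma}_k^2\le Q_2(\ell\vee1)^{2\nu-1}$ by \eqref{DIP 2.1}, whereas $\|\check{\boldsymbol{f}}\|_{\ell^1}\le(\sum_\ell(\ell+\tfrac12)^{-2s})^{1/2}\|\boldsymbol{f}\|_{\mathcal{W}^s}\le C_sM<\infty$ precisely because $s>1/2$. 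Hence $r_1$ has variance $\lesssim\de^2(\ell\vee1)^{2\nu-1}$ and its Gaussian tail at level $\widebar{S_{\ell,\ep}^{\textbf{II}}}$ is $\lesssim\exp(-c\tau^2|\log\de|^2)\ll\de^\tau$.

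The main obstacle is the higher-order remainder $r_2$, which is no longer Gaussian but a chaos in the entries of $\boldsymbol{B}^\ell$, and for which the crude operator-norm bound of Algorithm I is too lossy: the over-estimated level $L^{\textbf{II}}\asymp(\ep\sqrt{|\log\ep|}\vee\de|\log\de|)^{-1}$ no longer enforces $\de(\ell\vee1)^{\nu+1}\log\ell\lesssim1$. The role played by the level in Lemma \ref{Deviations} is now taken over by the operator threshold: on $\boldsymbol{A}_\ell$ one has $\|(\boldsymbol{K}_\de^\ell)^{-1}\|_{\text{op}}<O_{\ell,\de}^{-1}$, which through \eqref{Neumann op} together with the lower bound $\|(\boldsymbol{K}^\ell)^{-1}\|_{\text{op}}\gtrsim\|(\boldsymbol{K}^\ell)^{-1}\|_{\text{HS}}(\ell+1)^{-1/2}\gtrsim(\ell\vee1)^{\nu-1/2}$ furnished by \eqref{DIP 2.2} caps the effective resolution, confining the analysis to levels where $r_2$ stays below $\widebar{S_{\ell,\ep}^{\textbf{II}}}$. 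Concretely I would move one factor $(\boldsymbol{K}^\ell)^{-1}$ onto $\boldsymbol{\phi}_\ell$, producing the last-row norm $(\sum_{k}\boldsymbol{\gamma}_k^2)^{1/2}\lesssim(\ell\vee1)^{\nu-1/2}$ from \eqref{DIP 2.1}, absorb the remaining factor $\de(\boldsymbol{K}^\ell)^{-1}\boldsymbol{B}^\ell$ via its operator norm $\le\rho/(1-\rho)<1$ guaranteed on $\boldsymbol{A}_\ell\cap\boldsymbol{B}_\ell$ by \eqref{ineq1}, and control the residual fluctuation of $\|\boldsymbol{B}^\ell\|_{\text{op}}$ by the sub-Gaussian concentration of Lemma \ref{Operator concentration}, so that its tail at level $\widebar{S_{\ell,\ep}^{\textbf{II}}}$ is again $\lesssim\de^\tau$ for $\kappa,\tau_{\text{op}}$ large; the delicate point, where I expect the real work to lie, is certifying that the chaos $r_2$ does not exceed the threshold before $\boldsymbol{A}_\ell$ forces the cutoff, which rests on exploiting the Toeplitz structure rather than the operator norm alone. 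Combining the signal tail with the $r_1$ and $r_2$ tails gives \eqref{Deviation inequality II}, and integrating the squared tail gives \eqref{Expectation bound II}.
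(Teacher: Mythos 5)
Your proposal is correct and follows essentially the same route as the paper: the same signal/$r_1$/$r_2$ decomposition, the same key estimate $\|(\boldsymbol{K}^\ell)^{-1}\boldsymbol{f}^\ell\|\lesssim(\ell\vee1)^{\nu-1/2}$ obtained from the Toeplitz commutation $T_\ell(1/\boldsymbol{\dot{g}})\boldsymbol{f}^\ell=T_\ell(\boldsymbol{f})(1/\boldsymbol{\dot{g}})^\ell$ with $\|T_\ell(\boldsymbol{f})\|_{\text{op}}\le\|\check{\boldsymbol{f}}\|_{\ell^1}\lesssim1$ (your Young's inequality is the same bound), and the same mechanism by which the lower bound \eqref{DIP 2.2} turns the event $\boldsymbol{A}_\ell\cap\boldsymbol{B}_\ell$ into the constraint $\de(\ell\vee1)^{\nu+1/2}\log\ell\lesssim1$ that replaces the level cap of Algorithm I when controlling $r_2$ via Lemma \ref{Operator concentration}. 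The only cosmetic difference is that the paper keeps $\|\boldsymbol{B}^\ell\|_{\text{op}}^2/(\ell\log\ell)$ intact and applies the concentration bound to it directly, rather than absorbing one factor deterministically through \eqref{ineq1}.
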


\begin{proof}
The proof is very similar to Lemma \ref{Deviations}, whence we will just mention the notable changes compared to it. Once more, we shall only treat the case $\ell \geq 2$. First, we have
$$\langle (\boldsymbol{K}^\ell_\de)^{-1}\boldsymbol{1}_{\boldsymbol{A}_\ell}\boldsymbol{1}_{\boldsymbol{B}_\ell}\ep\boldsymbol{\xi},\boldsymbol{\phi}_\ell\rangle=\langle \ep\boldsymbol{\xi}_{L^{\textbf{II}}},^t(\boldsymbol{K}^\ell_\de)^{-1}\boldsymbol{1}_{\boldsymbol{A}_\ell}\boldsymbol{1}_{\boldsymbol{B}_\ell}\boldsymbol{\phi}_\ell\rangle=\ep \langle\boldsymbol{\xi}_{L^{\textbf{II}}}, ((1/\boldsymbol{\dot{g}}_\de)^\ell)'\rangle$$
so that a brief contitionning argument, combined with \eqref{Neumann norm} and Assumption \ref{Design regularity} entails
$$\PP(|\langle (\boldsymbol{K}^\ell_\de)^{-1}\boldsymbol{1}_{\boldsymbol{A}_\ell}\boldsymbol{1}_{\boldsymbol{B}_\ell}\ep\boldsymbol{\xi}_{L^{\textbf{II}}},\boldsymbol{\phi}_\ell\rangle|>t)\lesssim \exp(\frac{-t^2}{\ep^2 \ell^{2\nu-1} })$$
In order to treat the term $\PP(|\langle (\boldsymbol{K}^\ell_\de)^{-1}\boldsymbol{1}_{\boldsymbol{A}_\ell}\boldsymbol{1}_{\boldsymbol{B}_\ell}\de\boldsymbol{B}^\ell\boldsymbol{f}^\ell,\boldsymbol{\phi}_\ell\rangle|>t) $, we first establish a useful result for the sequel: if $\boldsymbol{g}$ satisfies \eqref{DIP 2.1}, then $$\|(\boldsymbol{K}^\ell)^{-1}\boldsymbol{f}^\ell\|=\|T_\ell(\boldsymbol{f})(1/\boldsymbol{\dot{g}})^\ell\|\leq \|T_\ell(\boldsymbol{f})\|_{\text{op}} \|(1/\boldsymbol{\dot{g}})^\ell\| $$
Furthermore, thanks to Proposition \ref{Toeplitz norm} we have $$\|T_\ell(\boldsymbol{f})\|_{\text{op}}\leq \Sum_{\ell \geq 0} |\check{\boldsymbol{f}}_\ell| \leq \Sum_{\ell \geq 0} \ell^{2s}|\check{\boldsymbol{f}}_\ell|^2 \Sum_{\ell \geq 0} \ell^{-2s}\lesssim 1$$
since $\boldsymbol{f}\in \mathcal{W}^s(M)$ and $s>1/2$.
We derive that \begin{align}
\label{Useful result}
\|(\boldsymbol{K}^\ell)^{-1}\boldsymbol{f}^\ell\|^2 \lesssim \ell^{2\nu-1}
\end{align}
Let us now bound the term of interest. Once more, we decompose it as $r_1+r_2$ where $r_1$ and $r_2$ are defined in \eqref{r1r2}. We now apply Proposition \ref{Toeplitz product and inverse} and \eqref{ineq1} and derive
\begin{align*}
\PP\Big(|r_1|>t\Big)&=\PP\big(|\langle \de  (\boldsymbol{K}^\ell)^{-1}\boldsymbol{{B}}^\ell \indi{\boldsymbol{A}_\ell}\indi{\boldsymbol{B}_\ell}\boldsymbol{f}^\ell, \boldsymbol{\phi}_\ell \rangle|>t\big)
\\&\leq \PP\big(|\langle \de  (\boldsymbol{K}^\ell)^{-1}\boldsymbol{f}^\ell, {^{\boldsymbol{t}}}\boldsymbol{{B}}^\ell\boldsymbol{\phi}_\ell \rangle|>t\big)
\end{align*}
The latter is a gaussian random variable with variance $\de^2\|(\boldsymbol{K}^\ell)^{-1}\boldsymbol{f}^\ell\|^2\lesssim \de^2 \ell^{2\nu-1}$ where we used \eqref{Useful result}.
Turning to the term $r_2$, we apply Proposition \ref{Toeplitz product and inverse} to derive
\begin{align*}
\PP\Big(|r_2|>t\Big)&=\PP\big(\big|\langle \de \big(\boldsymbol{{B}}^\ell\big)^2 (\boldsymbol{K}_\de^\ell)^{-1}\indi{\boldsymbol{A}_\ell}\indi{\boldsymbol{B}_\ell} \boldsymbol{f}^\ell , ^t(\boldsymbol{K}^\ell)^{-1}\boldsymbol{\phi}_\ell \rangle\big|>t\big)
\\&\lesssim \PP\big(\de \|\boldsymbol{{B}}^\ell\|^2_{\text{op}}\|(\boldsymbol{K}_\de^\ell)^{-1}\boldsymbol{f}^\ell\|\|^t(\boldsymbol{K}^\ell)^{-1}\boldsymbol{\phi}_\ell\|\indi{\boldsymbol{A}_\ell}\indi{\boldsymbol{B}_\ell}>t\big)
\end{align*}
We now apply \eqref{Neumann norm} and \eqref{Useful result} to get
$$\|(\boldsymbol{K}_\de^\ell)^{-1}\boldsymbol{f}^\ell\|\|^t(\boldsymbol{K}^\ell)^{-1}\boldsymbol{\phi}_\ell\|\lesssim \ell^{2\nu-1}$$
Hence,
\begin{align*}
\PP\Big(|r_2|>t\Big)&\lesssim \PP\big(\de^2 \ell^{2\nu-1} \|\boldsymbol{{B}}^\ell\|_{\text{op}}^2 \indi{\boldsymbol{A}_\ell}\indi{\boldsymbol{B}_\ell} >t\big) 
\\&\lesssim \PP\big(\frac{1}{\ell \log \ell}\|\boldsymbol{{B}}^\ell\|_{\text{op}}^2 \indi{\boldsymbol{A}_\ell}\indi{\boldsymbol{B}_\ell} >t (\de^2 \ell^{2\nu} \log \ell)^{-1}\big)
\end{align*}
Let us take a look back to Lemma \ref{Neumann}. On $\boldsymbol{A}_\ell\cap\boldsymbol{B}_\ell$ we have prooved that
$$\|(\boldsymbol{K}_\de^\ell)^{-1}\|_{\text{HS}} \geq  (1-\rho) Q_1 \ell^\nu $$
so that $\boldsymbol{A}_\ell\cap \boldsymbol{B}_\ell \subset\Big\{\de \ell^{\nu+1/2}\log \ell \lesssim 1\Big\}$. We deduce
\begin{align*}
\PP(|r_2|>t)&\lesssim \PP\Big(\frac{1}{\ell \log \ell} \|\boldsymbol{B}^\ell\|_{\text{op}}^2>t(\de \ell^{\nu-1/2})^{-1})
\\&\lesssim \exp\big( \frac{-t}{\de \ell^{\nu-1/2}}\big)\ind{t> \beta_0^2\de \ell^{\nu-1/2} }+\ind{t\leq \beta_0^2\de \ell^{\nu-1/2} }
\end{align*}
The end of the proof is identical to Lemma \ref{Deviations}.

%
%
\end{proof}

\begin{proof}[Proof of Theorem \ref{Upper bound of Algorithm 2}] The proof is very similar to Theorem \ref{Upper bound of Algorithm 1}, whence we will just emphasize the notable changes compared to it.
First, we apply Parseval's formula to derive
\begin{align*}
\E\|\tilde{\boldsymbol{f}}^{\textbf{II}}-\boldsymbol{f}\|_2^2 = \Sum_{\ell\leq L^{\textbf{II}}}\E \langle \widetilde{\boldsymbol{f}}^{\textbf{II}}-{\boldsymbol{f}},\boldsymbol{\phi}_\ell\rangle^2 + \Sum_{\ell>L^{\textbf{II}}} \check{\boldsymbol{f}}_\ell^2
\end{align*}
The second term is easily handled, since
\begin{align*} 
\Sum_{\ell>L^{\textbf{II}}} \check{\boldsymbol{f}}_\ell^2 &\leq (L^{\textbf{II}})^{-2s}
\\&\leq \big(\ep\sqrt{|\log \ep|}\vee \de|\log \de|\big)^{2s}
\\&\leq \big(\ep\sqrt{|\log \ep|}\big)^{\frac{2s}{s+\nu}}\vee \big(\de|\log \de|\big)^{\frac{2s}{s+\nu}}
\end{align*}
To bound the first sum, we write the following decomposition
\begin{align*}
\Sum_{\ell\leq L^{\textbf{II}}} \E\|\widetilde{\boldsymbol{f}}^{\textbf{II}}-\boldsymbol{f}\|^2&= \Sum_{\ell\leq L^{\textbf{II}}} \E\big(\boldsymbol{\zeta}_\ell -\check{\boldsymbol{f}}_\ell\big)^2\ind{|\boldsymbol{\zeta}_\ell|>S_{\ell,\ep}^{\textbf{II}}}+\E\Sum_{\ell\leq L} \check{\boldsymbol{f}}_\ell^2\ind{|\boldsymbol{\zeta}_\ell|\leq S_{\ell,\ep}^{\textbf{II}}}
\\&\lesssim I+II+III+IV
\end{align*}
where 
\begin{align*}
I&=\Sum_{\ell\leq L^{\textbf{II}}}\E \big(\boldsymbol{\zeta}_\ell-\check{\boldsymbol{f}}_\ell\big)^2\indi{\boldsymbol{A}_\ell}\indi{\boldsymbol{B}_\ell}\ind{|\boldsymbol{\zeta}_\ell|>S_{\ell,\ep}^{\textbf{II}}}
\\II&=\Sum_{\ell\leq L^{\textbf{II}}} \E \check{\boldsymbol{f}}_\ell^2\ind{|\boldsymbol{\zeta}_\ell|\leq S_{\ell,\ep}^{\textbf{II}}}\indi{\boldsymbol{A}_\ell}\indi{\boldsymbol{B}_\ell}
\\III&=\Sum_{\ell\leq L^{\textbf{II}}}\E \big(\boldsymbol{\zeta}_\ell-\check{\boldsymbol{f}}_\ell\big)^2\indi{\boldsymbol{A}_\ell}\indi{\boldsymbol{B}_\ell^c}\ind{|\boldsymbol{\zeta}_\ell|>S_{\ell,\ep}^{\textbf{II}}}+\Sum_{\ell\leq L^{\textbf{II}}} \E \check{\boldsymbol{f}}_\ell^2\ind{|\boldsymbol{\zeta}_\ell|\leq S_{\ell,\ep}^{\textbf{II}}}\indi{\boldsymbol{B}_\ell^c}
\\IV&=\Sum_{\ell\leq L^{\textbf{II}}}\E \check{\boldsymbol{f}}_\ell^2\indi{\boldsymbol{A}_\ell^c}\ind{|\boldsymbol{\zeta}_\ell|\leq S_{\ell,\ep}^{\textbf{II}}}
\end{align*}
Thanks to Lemma \ref{Neumann} and the definition of  $S_{\ell,\ep}^{\textbf{II}}$, we have $$\frac{Q_1}{1-\rho}\widebar{S_{\ell,\ep}^{\textbf{II}}}\leq S_{\ell,\ep}^{\textbf{II}} \leq \frac{Q_2\rho}{1-\rho}\widebar{S_{\ell,\ep}^{\textbf{II}}}$$ on  $\boldsymbol{A}_\ell\cap \boldsymbol{B}_\ell$. Thus, the Terms I and II can be treated identically to the preceding proof and yield the desired rates of convergence. The terms III  and IV are treated exactly as in the preceding proof.
\end{proof}

\subsection{Proof of theorem \ref{Lower bound}}
\begin{proof}
The lower bound will not decrease for increasing noise levels $\de$ and $\ep$, whence it suffices to provide the case $\de=0$ and the case $\ep=0$ separately. In the sequel, $c_i$ will denote a positive constant to be adjusted later and $L$ will play the role of a maximal level. Also, we will note $\boldsymbol{K}_\nu$ (resp. $\boldsymbol{\dot{g}}_\nu$) the operator (resp. the function) associated with the Laurent serie $(1-z)_L^\nu$. The function $\boldsymbol{\dot{g}}_{-1/2}$ will play an essential role in the sequel. Unfortunately it is not square integrable. We thus begin with a preliminary lemma, which states that a minor modification corrects this defect.
\begin{lemma}
\label{Lemma lower bound}
Let $\boldsymbol{h}$ be the function associated to the Laurent serie $\Sum_{\ell\geq 0} \frac{(-1)^\ell}{\log (\ell\vee2)} \binom{-1/2}{\ell} z^\ell$. Then $\boldsymbol{h}$ is square integrable. Furthermore, for all $\nu\geq 0$, for all $\ell\leq L$, $$\frac{(\ell\vee 1)^{\nu-1/2}}{\log (\ell\vee 2) } \lesssim |\langle \boldsymbol{K}_{-\nu} \boldsymbol{h},\boldsymbol{\phi}_\ell \rangle|\lesssim  (\ell\vee 1)^{\nu-1/2}  $$
\end{lemma}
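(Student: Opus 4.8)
The plan is to work entirely at the level of Laguerre (equivalently power-series) coefficients, since both the square-integrability of $\boldsymbol{h}$ and the inner products $\langle\boldsymbol{K}_{-\nu}\boldsymbol{h},\boldsymbol{\phi}_\ell\rangle$ are governed by the sequence $\check{\boldsymbol{h}}_\ell=\frac{(-1)^\ell}{\log(\ell\vee2)}\binom{-1/2}{\ell}$. First I would record, from the equivalence \eqref{Binom equiv}, that $(-1)^\ell\binom{-1/2}{\ell}\sim\ell^{-1/2}/\Gamma(1/2)$, so that $\check{\boldsymbol{h}}_\ell^2\sim\frac{1}{\pi\,\ell(\log\ell)^2}$ as $\ell\to\infty$. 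Square-integrability of $\boldsymbol{h}$ then reduces, via Parseval, to the convergence of $\sum_\ell\frac{1}{\ell(\log\ell)^2}$, which holds by the integral test; this is exactly the point where the logarithmic damping rescues the non-square-summable series defining $\boldsymbol{\dot{g}}_{-1/2}$.

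For the two-sided estimate, the key observation is that both generating sequences involved have a constant sign. Writing $a_k:=(-1)^k\binom{-\nu}{k}$ and $c_j:=(-1)^j\binom{-1/2}{j}$, one checks directly that $a_k\geq0$ and $c_j>0$ for all indices (the sign $(-1)^k$ of $\binom{-\nu}{k}$ is compensated), so that the coefficients $b_j:=c_j/\log(j\vee2)$ of $\boldsymbol{h}$ are positive. Since the Laurent serie of $\boldsymbol{K}_{-\nu}$ is $(1-z)^{-\nu}$ and lower triangular, the product formula \eqref{Toeplitz product and inverse} gives, for every $\ell\leq L$,
\begin{align*}
\langle\boldsymbol{K}_{-\nu}\boldsymbol{h},\boldsymbol{\phi}_\ell\rangle=\sum_{k=0}^\ell a_k\,b_{\ell-k}=\sum_{k=0}^\ell a_k\,\frac{c_{\ell-k}}{\log((\ell-k)\vee2)}\geq b_\ell>0,
\end{align*}
so that the absolute value in the statement may be dropped. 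The truncation at level $L$ is irrelevant here, because the $\ell$-th coefficient of a lower-triangular Toeplitz product only involves indices $\leq\ell\leq L$.

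Both bounds then follow by comparison with the coefficients of $(1-z)^{-\nu}(1-z)^{-1/2}=(1-z)^{-\nu-1/2}$, whose $\ell$-th coefficient is $(-1)^\ell\binom{-\nu-1/2}{\ell}\asymp(\ell\vee1)^{\nu-1/2}$ by \eqref{Binom equiv} (here $\nu+1/2>0$). For the upper bound I would use $\log((\ell-k)\vee2)\geq\log2$, hence $b_j\leq c_j/\log2$, giving $\langle\boldsymbol{K}_{-\nu}\boldsymbol{h},\boldsymbol{\phi}_\ell\rangle\leq(\log2)^{-1}\sum_{k=0}^\ell a_kc_{\ell-k}=(\log2)^{-1}(-1)^\ell\binom{-\nu-1/2}{\ell}\lesssim(\ell\vee1)^{\nu-1/2}$. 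For the lower bound, the monotonicity of $\log$ yields $\log((\ell-k)\vee2)\leq\log(\ell\vee2)$ for $0\leq k\leq\ell$, whence $\frac{1}{\log((\ell-k)\vee2)}\geq\frac{1}{\log(\ell\vee2)}$ and
\begin{align*}
\langle\boldsymbol{K}_{-\nu}\boldsymbol{h},\boldsymbol{\phi}_\ell\rangle\geq\frac{1}{\log(\ell\vee2)}\sum_{k=0}^\ell a_k\,c_{\ell-k}=\frac{(-1)^\ell\binom{-\nu-1/2}{\ell}}{\log(\ell\vee2)}\gtrsim\frac{(\ell\vee1)^{\nu-1/2}}{\log(\ell\vee2)}.
\end{align*}

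I do not expect a serious obstacle: the argument is essentially the observation that multiplying a positive sequence by the slowly varying factor $1/\log(j\vee2)$ changes the order of the convolution coefficients by at most the single factor $1/\log(\ell\vee2)$, which is pulled out on the worst index. The only points requiring care are turning the asymptotic equivalences of \eqref{Binom equiv} into genuinely uniform two-sided bounds valid for all $\ell\geq0$ (absorbing the finitely many small-$\ell$ terms into the constants, all quantities being positive), and allowing the implied constants to depend on $\nu$ through $\Gamma(\nu+1/2)$, which is harmless since the lower-bound theorem already permits $\nu$-dependence.
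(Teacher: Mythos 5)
Your proposal is correct and follows essentially the same route as the paper: express $\langle\boldsymbol{K}_{-\nu}\boldsymbol{h},\boldsymbol{\phi}_\ell\rangle$ as the convolution $\sum_{k=0}^\ell\binom{-\nu}{k}\binom{-1/2}{\ell-k}\log^{-1}((\ell-k)\vee2)$, exploit the constant sign of the summands to bound the logarithmic factor between $\log^{-1}(\ell\vee2)$ and a constant, and identify the resulting sum with the $\ell$-th coefficient of $(1-z)^{-\nu-1/2}$, estimated via \eqref{Binom equiv}. Your write-up is in fact slightly more careful than the paper's (explicit sign verification, explicit constants, and the remark on uniformizing the asymptotic equivalence over small $\ell$), but there is no substantive difference in the argument.
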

\begin{proof}[Proof of Lemma \ref{Lemma lower bound}] $\boldsymbol{h}$ is trivially squared integrable thanks to \eqref{Binom equiv} and Parseval's formula. Now, we have
$$\langle \boldsymbol{K}_{-\nu} \boldsymbol{h},\boldsymbol{\phi}_\ell \rangle=(-1)^\ell \Sum_{k=0}^\ell \binom{-\nu}{k} \binom{-1/2}{\ell-k}\log^{-1}\big((\ell-k)\vee2\big)$$
Moreover, since the product $\binom{-\nu}{k} \binom{-1/2}{\ell-k}$ has a constant sign for all $k\leq \ell$, we derive
$$ \log^{-1}(\ell\vee 2) \Big|\Sum_{k=0}^\ell \binom{-\nu}{k} \binom{-1/2}{\ell-k}\Big| \leq |\langle \boldsymbol{K}_{-\nu} \boldsymbol{h},\boldsymbol{\phi}_\ell \rangle |\leq \Big|\Sum_{k=0}^\ell \binom{-\nu}{k} \binom{-1/2}{\ell-k}\Big| $$
but $(-1)^\ell\Sum_{k=0}^\ell \binom{-\nu}{k} \binom{-1/2}{\ell-k}$ is precisely the $\ell^{\text{th}}$ coefficient of the power serie $(1-z)^{-\nu}(1-z)^{-1/2}=(1-z)^{-\nu-1/2}$ which satisfies, thanks to \eqref{Binom equiv},
$$ \Big|(-1)^\ell\Sum_{k=0}^\ell \binom{-\nu}{k} \binom{-1/2}{\ell-k}\Big|\sim \frac{\ell^{\nu-1/2}}{\Gamma(\nu+1/2)} $$
This entails the result.
\end{proof}
\begin{proof}[$\bullet$ Case $\de=0$]
For more clarity, we will suppose that $\boldsymbol{\xi}$ is a white noise (the proof readily adapts otherwise). Let hence $\boldsymbol{K}^0=c_1\boldsymbol{K}_\nu$. Then $\boldsymbol{K}^0\in\mathcal{G}_\nu(Q)$ for an appropriate constant $c_1$, thanks to Proposition \ref{DIP 2 Proposition}. Following the arguments of \citet{Willer}, it suffices to find $\boldsymbol{f}_0$, $\boldsymbol{f}_1$ such that
\begin{enumerate}[label=\roman*)]
\item \label{Ass1} $\boldsymbol{f}_0,\boldsymbol{f}_1\in \mathcal{W}^s(M)$
\item \label{Ass2} $\|\boldsymbol{f}_0-\boldsymbol{f}_1\|^2\gtrsim \ep^{\frac{2s}{s+\nu}}|\log \ep|^{-2}$
\item \label{Ass3} $K(\PP_1,\PP_2)\lesssim 1$ where $\PP_i$ is the law of $\boldsymbol{y}$ under the hypothesis $\boldsymbol{f}_i$, and $K$ is the Kullback-Leibler divergence.
\end{enumerate}
Let $L=c_2\ep^{\frac{-1}{s+\nu}}$. Set $\boldsymbol{f}_0=0$ and define $\boldsymbol{f}_1=c_3\boldsymbol{K}_{-\nu}\boldsymbol{h}$.
\\Point \ref{Ass1}: $\boldsymbol{f}_0$ trivially belongs to the considered set. Moreover, Lemma \ref{Lemma lower bound} entails
\begin{align*}
\|\boldsymbol{f}_1\|_{\mathcal{W}^s}^2\lesssim \ep^2\Sum_{\ell=0}^L \ell^{2s} \ell^{2\nu-1} \lesssim 1
\end{align*}
\\Point \ref{Ass2}: again, thanks to Lemma \ref{Lemma lower bound}, we have
$$\|\boldsymbol{f}_0-\boldsymbol{f}_1\|^2 \gtrsim \ep^2 \Sum_{\ell \leq L} \frac{\ell^{2\nu-1}}{\log^2 (\ell\vee 2)} \gtrsim \ep ^2 L^{2\nu} (\log L)^{-2}\gtrsim \ep^{\frac{2s}{s+\nu}}|\log \ep|^{-2}$$
\\Point \ref{Ass3}: the expression of the Kullback-Leibler divergence in this case is
$$K(\PP_1,\PP_2) = \frac{1}{2} \|\ep^{-1}\boldsymbol{K}^0(\boldsymbol{f}_0-\boldsymbol{f}_1)\|^2=\frac{c_3}{2} \|\boldsymbol{h}\|^2\lesssim 1$$
\\thanks to Lemma \ref{Lemma lower bound}. The choice of appropriate constants $c_i$ clearly yields the result and the proof is complete.
\end{proof}
\begin{proof}[$\bullet$ Case $\ep=0$]
Let $L=c_1 \de^{\frac{-1}{s+\nu}}$. Following the lines of \citet{HR}, we set $\boldsymbol{f}_0=c_2\phi_0$, $\boldsymbol{K}^0=c_3\boldsymbol{K}_\nu$ and we only consider couples $(\boldsymbol{K},\boldsymbol{f})$ such that $\boldsymbol{K}\boldsymbol{f}=\boldsymbol{q}_0$ for a fixed $\boldsymbol{q}_0=\boldsymbol{K}^0\boldsymbol{f}_0$. It is clear that, for well chosen $c_2$ and $c_3$, we have $\boldsymbol{f}_0\in \mathcal{W}^s(M)$ and $\boldsymbol{K}^0\in \mathcal{G}_\nu(Q)$. We thus define $\boldsymbol{H}$ the operator associated to the kernel $\boldsymbol{h}$ and introduce $\boldsymbol{K}^\de=\boldsymbol{K}_\nu+c_4\de\boldsymbol{H}$ a perturbation of $\boldsymbol{K}_\nu$. We shall refer to $\boldsymbol{\dot{g}}^\de$ for the corresponding kernel. Remark that we have 
\begin{align} \label{Lower bound decomp}
\boldsymbol{f}_1-\boldsymbol{f}_0=c_4\de(\boldsymbol{K}^\de)^{-1}\boldsymbol{H}\boldsymbol{f}_0=c_4\de(\boldsymbol{K}^\de)^{-1}\boldsymbol{h}
\end{align}
Furthermore, for $c_4$ small enough, we have thanks to Lemma \ref{Lemma lower bound} and Proposition \ref{DIP 2 Proposition},
$$\|\boldsymbol{K}^\de-\boldsymbol{K}^0\|_{\text{op}}\lesssim \de L^{1/2} \lesssim \de^{\frac{2s+2\nu-1}{2(s+\nu)}}< \frac{1}{2}$$
since $s>1/2$. Hence, the same Neumann serie arguments as in Lemma \ref{Neumann} entail that $\boldsymbol{K}^\de$ belongs to $\mathcal{G}_\nu(Q)$. We now need to check that \ref{Ass1}, \ref{Ass2} and \ref{Ass3} are satisfied, replacing $\ep$ with $\de$.\\
Point \ref{Ass1} : \eqref{Lower bound decomp} and the preceding remark entail 
$$\|\boldsymbol{f}_1-\boldsymbol{f}_0\|_{\mathcal{W}^s}^2 = c_4^2 \de^2\Sum_{\ell\leq L} \ell^{2s} \langle(\boldsymbol{K}^\de)^{-1}\boldsymbol{h}  ,\boldsymbol{\phi}_\ell\rangle \lesssim \de^2 \Sum_{\ell \leq L} \ell^{2s+2\nu-1} \lesssim 1 $$
\\Point \ref{Ass2} : we precise \eqref{Lower bound decomp} and write $$\boldsymbol{f}_1-\boldsymbol{f}_0=c_4 \de\boldsymbol{K}^{-1}\boldsymbol{h}+c_4^2\de^2(\boldsymbol{K}^\de)^{-1}\boldsymbol{K}^{-1}\boldsymbol{H}\boldsymbol{h}$$
Moreover, Lemma \ref{Lemma lower bound} and the preceding remark entail{\small
\begin{center}
$\begin{array}{cccc}
\|\de\boldsymbol{K}^{-1}\boldsymbol{H}\boldsymbol{f}_0\|^2 = \|\de\boldsymbol{K}^{-1}\boldsymbol{h}\|^2 \gtrsim \de^2 \Sum_{\ell \leq L} \frac{\ell^{2\nu-1}}{\log (\ell\vee 2)} \gtrsim \de^2 L^{2\nu} (\log L)^{-2}\gtrsim \de^{\frac{2s}{s+\nu}}|\log \de|^{-2}
\\\|\de^2(\boldsymbol{K}^\de)^{-1}\boldsymbol{K}^{-1}\boldsymbol{H}^2\boldsymbol{f}_0\|^2 \lesssim \de^4 \|(\boldsymbol{K}^\de)^{-1}\|_{\text{HS}} \|\boldsymbol{K}^{-1}\boldsymbol{H}\|_{\text{HS}} \|\boldsymbol{h}\| \lesssim \de^4 L^{4\nu+1} \lesssim \de^{\frac{2s-1}{\nu+s}} \de^2 L^{2\nu}
\end{array}$
\end{center}}
Since $s>1/2$, the second term is negligible with respect to the first. This proves the point \ref{Ass2}.
\\Point \ref{Ass3} : Since we work with couples $(\boldsymbol{K},\boldsymbol{f})$ such that $\boldsymbol{Kf}$ is fixed, we have 
$$K(\PP_0,\PP_1)=\frac{1}{2}\de^{-2} \|\boldsymbol{\dot{g}}^\de-\boldsymbol{\dot{g}}_\nu\|^2=\frac{c_4}{2}\|\boldsymbol{h}\|^2 \lesssim 1$$
thanks to Lemma \ref{Lemma lower bound} and the proof is complete.
\end{proof}
It remains to piece together the two cases $\de=0$ and $\ep=0$ to get the desired result.
\end{proof}
\subsubsection*{Acknowledgements}
The author would like to thank his advisor Dominique Picard for her help and her support.
\bibliographystyle{plainnat}
\bibliography{BiblioLaguerre}
\end{document}